\newtheorem{theorem}{Theorem}[section]
\newtheorem{corollary}[theorem]{Corollary}
\newtheorem{proposition}[theorem]{Proposition}
\newtheorem{lemma}[theorem]{Lemma}
\theoremstyle{definition}
\newtheorem{definition}[theorem]{Definition}
\newtheorem{example}[theorem]{Example}
\newcommand{\ZZ}{{\mathbb Z}}
\newcommand{\ff}{{\mathbb F}}
\def\Res{\mathop{\operatorfont Res}\nolimits}
\newcommand{\Cc}{{\mathcal C}}
\newcommand{\Cd}{{\mathcal D}}
\def\Res{\mathop{\operatorfont res}\nolimits}
\def\Pic{\mathop{\operatorfont Pic}\nolimits}
\begin{document}

\title{Coset bounds for algebraic geometric codes}

\author{Iwan M. Duursma\footnotemark[1]\  
~and~ Seungkook Park\footnotemark[2]} 

\date{October 8, 2008}

\renewcommand{\thefootnote}{\fnsymbol{footnote}}
\footnotetext[1]{Department of Mathematics, University of Illinois
at Urbana-Champaign (duursma@math.uiuc.edu)}
\footnotetext[2]{Department of Mathematical Sciences, University of
Cincinnati (seung-kook.park@uc.edu)}
\renewcommand{\thefootnote}{\arabic{footnote}}

\maketitle

\begin{abstract}
For a given curve $X$ and divisor class $C$, we give lower bounds on the degree of a divisor
$A$ such that $A$ and $A-C$ belong to specified semigroups of divisors. For suitable choices
of the semigroups we obtain (1) lower bounds for the size of a party $A$ that can recover
the secret in an algebraic geometric linear secret sharing scheme with adversary threshold $C$,
and (2) lower bounds for the support $A$ of a codeword in a geometric Goppa code with 
designed minimum support $C$. Our bounds include and improve
both the order bound and the floor bound. The bounds are illustrated for two-point codes
on general Hermitian and Suzuki curves. 
\end{abstract}

\section*{Introduction} \label{S:Introduction}

Two recent results motivated this paper. The first is the complete description of the minimum distance  
of Hermitian two-point codes by Homma and Kim \cite{HomKim06}. The second is the introduction of algebraic 
geometric linear secret sharing schemes by Chen and Cramer \cite{CheCra06}. \\
For algebraic geometric codes,  the actual value of the minimum distance is not a priori known and needs to be determined 
or estimated from the data used in the construction. 
The best known lower bounds for the minimum distance of an algebraic geometric code are the order bound and the floor bound. 
Beelen \cite{Bee07FF}, and independently the second author \cite{SKP07}, have shown that the order bound agrees, for Hermitian two-point codes, with
the actual minimum distances found by Homma and Kim. In this paper we improve both the order bound and the floor bound. 
We illustrate our results and the obtained improvements for two-point codes from the Suzuki curves. \\ 
An important application of secret sharing schemes is secure multi-party computation,
which requires linear secret sharing schemes with a multiplicative property \cite{CraDamMau00}, \cite{CraetSix05}.
Chen and Cramer proposed to use one-point algebraic geometric codes for secret sharing and they have shown that the obtained algebraic geometric 
linear secret sharing schemes can be used for efficient secure computation over small fields \cite{CheCra06}. Parties can reconstruct 
a secret uniquely from their shares only if the total number of shares exceeds the adversary threshold of the secret sharing scheme. 
The algebraic geometric construction of a linear secret sharing scheme guarantees a lower bound for the adversary threshold.
The precise value of the threshold is in general not known. We show that the adversary threshold corresponds to the minimum
distance between cosets of a code. Our results give improved lower bounds for distances between cosets of an
algebraic geometric code, and therefore improved lower bounds for adversary thresholds of algebraic geometric linear secret sharing schemes. \\

As our main results, we formulate an \emph{ABZ bound for codes} and an \emph{ABZ bound for cosets}.
The bounds improve and generalize the floor bound and the order bound, respectively. 
For each of the bounds, we illustrate the improvements with examples from the Suzuki curves. The 
bounds can be used as tools for constructing improved codes as well as improved secret sharing schemes.
Our \emph{Main theorem} is an even more general bound. Its main advantage is that it has a short proof 
and that all other bounds can be obtained as special cases.  \\
The floor bound is independent of the order bound. Algorithms are available for decoding up to half the order bound 
but not for decoding up to half the floor bound. Beelen \cite{Bee07FF} gives an example where the floor bound exceeds the
order bound. For our generalizations there is a strict hierarchy. The improved order bound, obtained with the
ABZ bound for cosets, is at least the ABZ bound for codes, which improves the floor bound. We show that decoding is 
possible up to half the bound in our main theorem, and therefore up to half of all our bounds. In particular, we obtain
for the first time an approach to decode up to half the floor bound. \\
 
In Section \ref{S:CosetsLinearCodes}, we describe the use of linear codes for secret sharing
and the relation between coset distances and adversary thresholds. Theorem \ref{T:cosetbound}
gives a general coset bound for linear codes. Appendix \ref{S:CosetDecoding} gives a coset
decoding procedure that decodes up to half the bound. Algebraic geometric codes are defined in
Section \ref{S:AGCodes}. Theorem \ref{T:ABZcodes} gives the ABZ 
bound for algebraic geometric codes with a first proof based on the AB bound for linear codes.
Section \ref{S:CosetsAGCodes} gives a geometric characterization of coset distances for
algebraic geometric codes. 
In Section \ref{S:SemigroupIdeals} we define, for a divisor $C$ and for a point $P$,
a semigroup ideal
\[
\Gamma_P(C) = \{ A : L(A) \neq L(A-P) \wedge L(A-C) \neq L(A-C-P) \}
\]
such that the minimal degree for a divisor $A$ in $\Gamma_P(C)$
is a lower bound for the coset distance of an algebraic geometric code. 
In Section \ref{S:MainTheorem}, the main theorem gives a lower bound 
for the degree of a divisor in the semigroup ideal (Theorem \ref{T:cbdiv}). 
In Section \ref{S:OrderFloor}, we formulate the ABZ bound for cosets (Theorem \ref{T:cbabz}) 
and we describe its relation to both the order bound (Theorem \ref{T:order}) and the 
floor bound (Theorem \ref{T:floor}). 
The successful application of our bounds depends on the possibility to analyse the
complement 
\[
\Delta_P(C)  = \{ A : L(A) \neq L(A-P) \wedge L(A-C) = L(A-C-P) \} 
\]
and to compare naturally defined subsets of $\Delta_P(C)$.
Section \ref{S:DeltaSets} gives important basic relations among delta sets.  
In Section \ref{S:Discrepancies}, we define a discrepancy, for given 
points $P$ and $Q$, as a divisor $A \in \Delta_P(Q) = \Delta_Q(P).$ 
Discrepancies are our main tool for analyzing and improving lower bounds
for coset distances in large families of codes.
In Section \ref{S:Hermitian}, we give two proofs, 
one due to \cite{Bee07FF}, \cite{SKP07}, and one new, 
for lower bounds for the minimum distance of Hermitian two-point codes.
In Section \ref{S:Suzuki}, we determine discrepancies for Suzuki curves,
and we give examples of the ABZ bound for codes, the ABZ bound for cosets,
and the main theorem, that improve previously known bounds. 

\section{Cosets of linear codes} \label{S:CosetsLinearCodes}

Let $\ff$ be a finite field. A {$\ff$-linear code} $\Cc$ of {length} $n$ is a linear subspace of $\ff^n$. 
The {Hamming distance} between two vectors $x, y \in \ff^n$ is 
$d(x,y) = | \{ i : x_i \neq y_i \} |.$
The {minimum distance} of a nontrivial linear code $\Cc$ is
\begin{align*}
d(\Cc) &= \min\, \{ d(x,y) : x,y \in \Cc, x \neq y \} \\
      &= \min \, \{ d(x,0) : x \in \Cc, x \neq 0 \}.
\end{align*}
If $d(\Cc) \geq 2t+1$ and if $y \in \ff^n$ is at distance at most $t$ from $\Cc$ then there exists a unique
word $c \in \Cc$ with $d(c,y) \leq t.$   \\

The Hamming distance between two nonempty subsets $X, Y\subset \ff^n$ is the minimum of
$\{ d(x,y) : x \in X, y \in Y \}$. For a proper subcode $\Cc' \subset \Cc$, the
minimum distance of the collection of cosets $\Cc/\Cc'$ is
\begin{align*}
d(\Cc/\Cc') &=  \min\, \{ d(x+\Cc',y+\Cc') : x,y \in \Cc, x-y \not \in \Cc' \} \\
        &=  \min\, \{ d(x,0) : x \in \Cc , x \not \in \Cc' \}.
\end{align*} 

\begin{lemma}
If $d(\Cc/\Cc') \geq 2t+1$ and if $y \in \ff^n$ is at distance at most $t$ from $\Cc$ then there exists a unique
coset $c + \Cc' \in \Cc / \Cc'$ with $d(c+\Cc',y+\Cc') \leq t.$ 
\end{lemma}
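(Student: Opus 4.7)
The plan is to mimic the familiar unique decoding argument for ordinary linear codes, replacing Hamming weight by the coset weight $\min_{c' \in \Cc'} \wt(x + c')$ and invoking the triangle inequality in the quotient space $\ff^n/\Cc'$.

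For existence, I would simply unwind the definitions. By hypothesis, there exists $c \in \Cc$ with $d(c,y) \leq t$. Since $d(c+\Cc', y+\Cc')$ is defined as the minimum of $d(c+u, y+v)$ over $u,v \in \Cc'$, and the choice $u=v=0$ is allowed, we get $d(c+\Cc', y+\Cc') \leq d(c,y) \leq t$, which supplies the required coset.

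For uniqueness, the key observation is that the coset distance $d(\,\cdot\, + \Cc',\,\cdot\, + \Cc')$ is a genuine metric on $\ff^n/\Cc'$, so it satisfies the triangle inequality. Suppose $c_1 + \Cc'$ and $c_2 + \Cc'$ both lie within coset distance $t$ of $y + \Cc'$. Then
\[
d(c_1 + \Cc',\, c_2 + \Cc') \;\leq\; d(c_1 + \Cc',\, y + \Cc') + d(y + \Cc',\, c_2 + \Cc') \;\leq\; 2t.
\]
Since $c_1, c_2 \in \Cc$ we have $c_1 - c_2 \in \Cc$, and the coset distance on the left equals $\min_{c' \in \Cc'} \wt(c_1 - c_2 + c')$. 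If $c_1 - c_2 \notin \Cc'$, then $c_1 - c_2 + c' \in \Cc \setminus \Cc'$ for every $c' \in \Cc'$, so this minimum is at least $d(\Cc/\Cc') \geq 2t+1$, contradicting the bound $2t$ just derived. Hence $c_1 - c_2 \in \Cc'$, so $c_1 + \Cc' = c_2 + \Cc'$.

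The only step that requires a moment of care is verifying the triangle inequality for the coset distance, but this is immediate: given optimizers $u_1,v_1$ for the pair $(c_1,y)$ and $u_2,v_2$ for $(y,c_2)$, the vectors $c_1+u_1$, $y+v_1$, $y+v_2$, $c_2+u_2$ satisfy the ordinary triangle inequality in $\ff^n$ after translating the middle two terms by the element $v_1 - v_2 \in \Cc'$, which does not affect the coset representatives. So I anticipate no real obstacle; the lemma is a direct translation of the standard $(2t+1)$-separation argument to the quotient setting.
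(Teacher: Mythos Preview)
Your proof is correct; this lemma is stated in the paper without proof, as an immediate analogue of the standard unique-decoding fact for linear codes. Your argument via the quotient metric and the triangle inequality is exactly the expected one, and the minor bookkeeping you flag (verifying that coset distance is a metric on $\ff^n/\Cc'$) is handled cleanly by writing $d(x+\Cc',y+\Cc') = \min_{w \in \Cc'} \wt(x-y+w)$ and using subadditivity of weight.
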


The {dual code} $\Cd$ of $\Cc$ is the maximal subspace of $\ff^n$ that 
is orthogonal to $\Cc$ with respect to the standard inner product. To the
extension of codes $\Cc/\Cc'$ corresponds an extension of dual
codes $\Cd'/\Cd$ with distance parameter $d(\Cd'/\Cd).$ For two vectors
$x, y \in \ff^n$, let $x \ast y \in \ff^n$ denote the Hadamard or
coordinate-wise product of the two vectors. 

\begin{theorem}(Shift bound or Coset bound) \label{T:cosetbound}
Let $\Cc/\Cc_1$ be an extension of $\ff$-linear codes
with corresponding extension of dual codes $\Cd_1/\Cd$ such that 
$\dim \Cc/\Cc_1 = \dim \Cd_1/\Cd = 1$.
If there exist vectors $a_1, \ldots, a_w$ and $b_1, \ldots, b_w$ such that 
\[
\begin{cases} 
a_i \ast b_j \in \Cd    &\text{for $i+j \leq w$}, \\
a_i \ast b_j \in \Cd_1 \backslash \Cd &\text{for $i+j = w+1$}, 
\end{cases}
\]
then $d(\Cc/\Cc_1) \geq w.$ 
\end{theorem}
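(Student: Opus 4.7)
The plan is to pick a minimum weight representative $c \in \Cc \setminus \Cc_1$ with $\wt(c) = d(\Cc/\Cc_1) =: d$, and to extract from the hypotheses a $w \times w$ matrix whose rank is simultaneously forced to equal $w$ and to be at most $d$. This is the classical shift bound strategy, adapted here to a coset extension rather than to a cyclic code.

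Concretely, I would form the $w \times w$ matrix $M$ with entries
\[
M_{ij} = \langle c, a_i \ast b_j \rangle = \sum_{k=1}^n c_k (a_i)_k (b_j)_k.
\]
For $i+j \leq w$ the hypothesis gives $a_i \ast b_j \in \Cd \subseteq \Cc^\perp$, so $M_{ij} = 0$. For $i+j = w+1$ we have $a_i \ast b_j \in \Cd_1 \setminus \Cd$; here I would invoke the key duality fact that the bilinear pairing $\Cc \times \Cd_1 \to \ff$ descends to a nondegenerate pairing of the one-dimensional quotients $\Cc/\Cc_1 \times \Cd_1/\Cd \to \ff$ (because $\Cc_1 = \Cd_1^\perp$ and $\Cd = \Cc^\perp$). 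Since $c$ represents a nonzero class in $\Cc/\Cc_1$ and each $a_i \ast b_j$ with $i+j=w+1$ represents the unique (up to scalar) nonzero class in $\Cd_1/\Cd$, the pairing $M_{ij}$ is nonzero. Thus $M$ is anti-triangular with nonzero anti-diagonal, so $\det M \neq 0$ and $\rank M = w$.

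On the other hand, writing $S = \{k : c_k \neq 0\}$, the sum defining $M_{ij}$ only runs over $k \in S$, so $M$ factors as $M = A \, D \, B^{T}$, where $A$ and $B$ are $w \times |S|$ matrices built from the restrictions of the $a_i$ and $b_j$ to $S$, and $D = \diag(c_k : k \in S)$. Therefore $\rank M \leq |S| = d$, and combining the two bounds gives $w \leq d = d(\Cc/\Cc_1)$.

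The main obstacle is the step where one must conclude $M_{ij} \neq 0$ on the antidiagonal: the hypothesis only says $a_i \ast b_j \notin \Cd$, not that its pairing with our particular $c$ is nonzero, so it is essential to use the one-dimensionality of both quotients together with the nondegeneracy of the induced pairing. Once this is in place, the determinant/rank argument is routine, and the rest of the proof consists only of bookkeeping about the support of $c$.
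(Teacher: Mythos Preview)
Your argument is correct and is the classical van Lint--Wilson matrix-rank proof of the shift bound. The paper uses a different, more constructive route: instead of forming the $w\times w$ matrix $M$ and bounding its rank, it first observes (as you do) that $\langle c, a\ast b\rangle\neq 0$ whenever $c\in\Cc\setminus\Cc_1$ and $a\ast b\in\Cd_1\setminus\Cd$, and then argues by contradiction that no $c\in\Cc\setminus\Cc_1$ can be supported on $w-1$ coordinates. To do so it uses the triangular structure to infer that $a_1,\ldots,a_w$ are linearly independent, picks a nontrivial combination $a$ vanishing on any prescribed $w-1$ positions, and lets $i$ be the largest index with nonzero coefficient; then $a\ast b_{w+1-i}\in\Cd_1\setminus\Cd$ is zero on those $w-1$ positions, forcing $\langle c,a\ast b_{w+1-i}\rangle=0$, a contradiction. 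Both proofs hinge on the same nondegeneracy of the induced pairing $\Cc/\Cc_1\times\Cd_1/\Cd\to\ff$; your rank factorization $M=ADB^T$ is cleaner bookkeeping, while the paper's explicit construction of a vector $a\ast b\in\Cd_1\setminus\Cd$ vanishing at $w-1$ given positions is what later feeds into the coset decoding procedure in the appendix.
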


\begin{proof}
For all $c \in \Cc \backslash \Cc_1$ and $a \ast b \in \Cd_1 \backslash \Cd$,
$\sum_i a_i b_i c_i \neq 0$. To show the nonexistence of a vector $c \in \Cc \backslash \Cc_1$ 
with $d(c,0)<w$, it suffices to show, for any choice of $w-1$ coordinates, the existence of a vector 
$a \ast b \in \Cd_1 \backslash \Cd$ that is zero in those coordinates.
The conditions show that the vectors 
$a_1, \ldots, a_w$ are linearly independent, and there exists a nonzero linear combination 
$a$ of the vectors $a_1, \ldots, a_w$ vanishing at $w-1$ given coordinates. 
If $i$ is maximal such that $a_i$ has a nonzero coefficient in the linear combination $a$
then $a \ast b_{w+1-i} \in \Cd_1 \backslash \Cd$ is zero in the $w-1$ coordinates. 
\end{proof}

Let $y \in \ff^n$ be a word at distance at most $t$ from $\Cc$. For given vectors $a_1, \ldots, a_w$ 
and $b_1, \ldots, b_w$ such that $w > 2t$, the unique coset $c + \Cc_1 \in \Cc / \Cc_1$ with $d(c+\Cc_1,y+\Cc_1) \leq t$ 
can be computed efficiently with the coset decoding procedure in Appendix \ref{S:CosetDecoding}.
Theorem \ref{T:cosetbound} can be used to estimate the minimum distance $d(\Cc/\Cc')$ of an extension $\Cc/\Cc'$ 
with $\dim \Cc/\Cc' > 1,$ after dividing $\Cc/\Cc'$ into subextensions.
 
\begin{lemma} \label{T:itercosetbound}
Let $\Cc/\Cc'$ be an extension of $\ff$-linear codes of length $n$.
For $\Cc \supset \Cc'' \subset \Cc'$,
\[
d(\Cc/\Cc') = \min \{ d(\Cc/\Cc''), d(\Cc''/\Cc') \}. 
\]
\end{lemma}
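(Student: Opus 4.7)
The plan is to return to the weight-based characterization
\[
d(\Cc/\Cc') \;=\; \min\{d(x,0) : x \in \Cc,\ x \notin \Cc'\}
\]
supplied by the definition, and to partition the indexing set $\Cc \setminus \Cc'$ according to the intermediate code $\Cc''$. Here I read the hypothesis as the chain $\Cc' \subseteq \Cc'' \subseteq \Cc$ (interpreting $\Cc \supset \Cc'' \subset \Cc'$ as $\Cc \supset \Cc'' \supset \Cc'$); the two-element chain would be uninteresting for this lemma.

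First I would observe that, because $\Cc' \subseteq \Cc''$, every $x \in \Cc$ with $x \notin \Cc'$ belongs to exactly one of the two disjoint sets
\[
\{x \in \Cc : x \notin \Cc''\} \quad\text{and}\quad \{x \in \Cc'' : x \notin \Cc'\}.
\]
In the first case $d(x,0) \geq d(\Cc/\Cc'')$, and in the second case $d(x,0) \geq d(\Cc''/\Cc')$. Minimising over all such $x$ yields
\[
d(\Cc/\Cc') \;\geq\; \min\{\,d(\Cc/\Cc''),\ d(\Cc''/\Cc')\,\}.
\]

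For the reverse inequality I would simply use the opposite direction of the same inclusions. Any $x \in \Cc$ with $x \notin \Cc''$ automatically satisfies $x \notin \Cc'$ (since $\Cc' \subseteq \Cc''$), and any $x \in \Cc'' \setminus \Cc'$ lies in $\Cc \setminus \Cc'$ (since $\Cc'' \subseteq \Cc$). Hence both minima on the right-hand side are taken over subsets of $\{x \in \Cc : x \notin \Cc'\}$, so each is at least $d(\Cc/\Cc')$, and combining the two inequalities completes the proof.

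I do not expect a substantive obstacle: the argument reduces to a set-theoretic partition of coset representatives combined with the weight-based reformulation of each coset distance. The only point of care is the degenerate situation where $\Cc'' = \Cc$ or $\Cc'' = \Cc'$, in which one of the two distances on the right is the minimum over an empty set; with the standard convention that an empty minimum equals $+\infty$, the identity still holds and reduces to a tautology.
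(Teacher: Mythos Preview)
Your argument is correct: the partition of $\Cc\setminus\Cc'$ into $\Cc\setminus\Cc''$ and $\Cc''\setminus\Cc'$, together with the weight characterization of the coset distance, gives both inequalities cleanly, and your reading of the hypothesis as the chain $\Cc'\subseteq\Cc''\subseteq\Cc$ is the intended one. The paper itself states this lemma without proof, so there is no approach to compare against; your write-up simply supplies the omitted routine verification.
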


We will now describe the use of code extensions for secret sharing. 
Our description focuses on the connection between secret sharing thresholds and coset distances
that will be established in Corollary \ref{C:ssbounds}. 
The main properties that we need are described in the following two lemmas. 

\begin{lemma} \label{L:dual}
Let $\{ 1, 2, \ldots, n \} = I \cup J$ be a partition of the coordinates. There exists
a word $r \in \Cc \backslash \Cc_1$ with support in $I$ if and only if there exists no
word $s \in \Cd_1 \backslash \Cd$ with support in $J$.
\end{lemma}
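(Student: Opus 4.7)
The plan is to reformulate the lemma as a statement about dimensions of ``shortened'' codes, and then deduce the equivalence from a single dimension identity relating $\mathcal{C}$, $\mathcal{C}_1$, $\mathcal{D}$ and $\mathcal{D}_1$ after puncturing/shortening to $I$ and $J$ respectively.

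For a code $\mathcal{E} \subset \mathbb{F}^n$ and a subset $S \subset \{1,\dots,n\}$, let $\mathcal{E}_S$ denote the subspace of words of $\mathcal{E}$ whose support lies in $S$. In this notation the lemma asserts $\mathcal{C}_I \not\subset \mathcal{C}_1$ if and only if $(\mathcal{D}_1)_J \subset \mathcal{D}$. Since $\dim \mathcal{C}/\mathcal{C}_1 = \dim \mathcal{D}_1 / \mathcal{D} = 1$, each of these is really a statement that a certain dimension jump equals $1$ rather than $0$, so the lemma reduces to the equality
\[
\bigl( \dim \mathcal{C}_I - \dim (\mathcal{C}_1)_I \bigr) + \bigl( \dim (\mathcal{D}_1)_J - \dim \mathcal{D}_J \bigr) = 1.
\]

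The first step is the duality identity $\dim \mathcal{C}_I + \dim \mathcal{D} = |I| + \dim \mathcal{D}_J$ (and likewise with $\mathcal{C}_1, \mathcal{D}_1$). This is standard: $\mathcal{C}_I$ is the kernel of the projection $\pi_J : \mathcal{C} \to \mathbb{F}^J$, so $\dim \mathcal{C}_I = \dim \mathcal{C} - \dim \pi_J(\mathcal{C})$, while pairing $\mathbb{F}^J$ against itself identifies $\pi_J(\mathcal{C})^{\perp}$ with $\mathcal{D}_J$, giving $\dim \pi_J(\mathcal{C}) = |J| - \dim \mathcal{D}_J$. Combining these with $\dim \mathcal{C} + \dim \mathcal{D} = n = |I| + |J|$ yields the claimed identity.

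Subtracting the identity for $(\mathcal{C},\mathcal{D})$ from the identity for $(\mathcal{C}_1,\mathcal{D}_1)$ and using $\dim \mathcal{D}_1 - \dim \mathcal{D} = 1$ gives exactly the displayed equation above. Since $(\mathcal{C}_1)_I \subset \mathcal{C}_I$ and $\mathcal{D}_J \subset (\mathcal{D}_1)_J$ each have codimension $0$ or $1$, and their codimensions must sum to $1$, exactly one of the two inclusions is strict, which is precisely the equivalence claimed.

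The main obstacle is a bookkeeping one: ensuring the duality identity $\dim \pi_J(\mathcal{C})^{\perp} = \dim \mathcal{D}_J$ is applied with the correct roles of $I$ and $J$, since puncturing on one side corresponds to shortening on the other. Once that is set up cleanly, the lemma follows by a one-line dimension count rather than by constructing vectors explicitly.
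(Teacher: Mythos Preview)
Your proof is correct and arrives at exactly the same dimension identity as the paper,
\[
\dim(\mathcal{C}_I/(\mathcal{C}_1)_I) + \dim((\mathcal{D}_1)_J/\mathcal{D}_J) = 1,
\]
from which the equivalence is immediate. The paper's packaging is slightly different: it writes down the exact sequence $0 \to (\mathcal{C}\cap E_I)/(\mathcal{C}_1\cap E_I) \to \mathcal{C}/\mathcal{C}_1 \to (\mathcal{C}+E_I)/(\mathcal{C}_1+E_I) \to 0$ and observes that the right-hand term is naturally dual to $(\mathcal{D}_1\cap E_J)/(\mathcal{D}\cap E_J)$ via $(\mathcal{C}+E_I)^\perp = \mathcal{D}\cap E_J$, giving the identity in one stroke. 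Your route instead establishes the absolute identity $\dim \mathcal{C}_I + \dim \mathcal{D} = |I| + \dim \mathcal{D}_J$ for each dual pair and subtracts. Both arguments rest on the same puncture/shorten duality; the exact-sequence version is a little more conceptual and avoids bringing $|I|$ into the count, while yours is more hands-on and makes the underlying rank--nullity computation explicit.
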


\begin{proof} 
Let $E_I$ (resp. $E_J$) be the subspace of $\ff^n$ of all vectors with support in $I$ 
(resp. $J$). The exact sequences 
\begin{align*}
0 \longrightarrow \Cc \cap E_I / \Cc_1 \cap E_I \longrightarrow &\Cc / \Cc_1 
  \longrightarrow \Cc + E_I / \Cc_1 + E_I \longrightarrow 0, \\
0 \longrightarrow \Cd_1 \cap E_J / \Cd \cap E_J \longrightarrow &\Cd_1 / \Cd 
  \longrightarrow \Cd_1 + E_J / \Cd + E_J \longrightarrow 0, 
\end{align*}
are in duality via $V \mapsto V^\ast = \text{\rm{Hom}}(V,\ff)$. And
\[
(\dim \Cc \cap E_I / \Cc_1 \cap E_I) + (\dim \Cd_1 \cap E_J / \Cd \cap E_J) = \dim \Cc / \Cc_1 = 1.
\]
\end{proof}

\begin{lemma} \label{L:DD1}
Let $y_1 \in \Cd_1 \backslash \Cd$. 
For a given vector $s \in \Cd_1 = \Cd \oplus \langle y_1 \rangle$, the projection of $s$ on
$\langle y_1 \rangle$ is uniquely determined by the subset of coordinates $\{ s_i : i \in A \}$ 
if and only if $\Cd_1 \backslash \Cd$ contains no word that is zero in the positions $A$.
\end{lemma}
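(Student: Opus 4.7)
The plan is to unwind the statement using the direct sum decomposition and reduce it to a linear-algebra statement about elements of $\Cd_1$ vanishing on $A$. Write any $s \in \Cd_1$ uniquely as $s = d + \alpha y_1$ with $d \in \Cd$ and $\alpha \in \ff$; the projection of $s$ onto $\langle y_1 \rangle$ is $\alpha y_1$, which is determined by the scalar $\alpha$. I would interpret the phrase ``uniquely determined by $\{s_i : i \in A\}$'' as follows: for every pair $s, s' \in \Cd_1$ that agree in all coordinates indexed by $A$, the scalars $\alpha$ and $\alpha'$ must coincide.

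Next I would use linearity to replace this statement by its homogeneous version. Setting $t = s - s' \in \Cd_1$, the agreement condition becomes $t_i = 0$ for all $i \in A$, and equality of projections becomes the vanishing of the projection of $t$, which is equivalent to $t \in \Cd$. Thus the projection is uniquely determined by the restriction to $A$ if and only if every $t \in \Cd_1$ with $t|_A = 0$ already lies in $\Cd$, i.e.\ $\Cd_1 \backslash \Cd$ contains no word that is zero on the coordinates in $A$.

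Both implications then follow immediately: if such a $t \in \Cd_1 \backslash \Cd$ with $t|_A = 0$ exists, then $s$ and $s + t$ have the same $A$-coordinates but different projections onto $\langle y_1 \rangle$, contradicting uniqueness; conversely, if no such $t$ exists, then any two $s, s' \in \Cd_1$ agreeing on $A$ satisfy $s - s' \in \Cd$, forcing $\alpha = \alpha'$.

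There is no real obstacle here—the statement is essentially a formal consequence of the direct sum $\Cd_1 = \Cd \oplus \langle y_1 \rangle$. The only subtlety worth flagging is the precise interpretation of ``uniquely determined by,'' since the claim is not about recovering $\alpha$ from $s|_A$ alone without knowing $s \in \Cd_1$, but rather about $\alpha$ being a well-defined function of the restriction map $\Cd_1 \to \ff^A$.
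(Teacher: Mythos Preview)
Your argument is correct and entirely self-contained: you reduce the statement to the homogeneous version via $t = s - s'$ and observe that uniqueness of the projection is equivalent to $\{t \in \Cd_1 : t|_A = 0\} \subset \Cd$.

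The paper takes a somewhat different route for the ``if'' direction. Rather than arguing by subtraction, it invokes the preceding duality lemma (Lemma~\ref{L:dual}) to produce a vector $r \in \Cc \setminus \Cc_1$ supported in $A$, and then writes down an explicit recovery formula: for $s = y + \lambda y_1$ with $y \in \Cd$, one has $r \cdot s = \lambda (r \cdot y_1)$, hence $\lambda = (r \cdot s)/(r \cdot y_1)$. What this buys is constructivity: the paper's proof not only shows $\lambda$ is determined, it tells you how to compute it from the shares $\{s_i : i \in A\}$, which is exactly what is needed later for secret reconstruction. Your proof is shorter and avoids the appeal to Lemma~\ref{L:dual}, but it is an existence argument only and does not yield the reconstruction formula.
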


\begin{proof}
The only if part is clear. For the if part we may assume with the previous lemma that there
exists $r \in \Cc \backslash \Cc_1$ with support in $A$. For any such $r$, and for 
$s = y + \lambda y_1$, $y \in \Cd$,
\[
r \cdot s = r \cdot (y + \lambda y_1) = \lambda (r \cdot y_1). 
\]
Since $(r \cdot y_1) \neq 0$, we obtain $\lambda = (r \cdot s) / (r \cdot y_1).$
\end{proof}

Let $y_1 \in \Cd_1 \backslash \Cd$. For a secret $\lambda \in \ff$, and for a random vector $y \in \Cd$, the vector
$s = y + \lambda y_1$ is called a vector of shares for $\lambda$. A subset
$A \subset \{1,2,\ldots,n\}$ is called qualified if the shares $\{ s_i : i \in A \}$ 
determine $\lambda$ uniquely. Whether $A$ is qualified depends on $\Cd_1 / \Cd$
but not on the vector of shares $s \in \Cd_1$. Let 
$\Gamma(\Cd_1 / \Cd)$ denote the collection of all subsets $A \subset \{1,2,\ldots,n\}$ 
that are qualified for $\Cd_1 / \Cd$ and let $\Delta(\Cd_1 / \Cd)$ denote the collection
of all subsets $A \subset \{1,2,\ldots,n\}$ that are not qualified for $\Cd_1 / \Cd$.
For the definition and main properties of a general linear secret sharing scheme 
we refer to \cite{CraetSix05}.

\begin{theorem} \label{T:ssbounds}
Let $\Cc/\Cc_1$ and $\Cd_1/\Cd$ be dual extensions of $\ff$-linear codes of length $n$.
Let $E_A$ be the subset of $\ff^n$ of all vectors with support in $A$.
\begin{align*}
\Gamma(\Cd_1 / \Cd) &= \{ A : \Cc \cap E_A \neq \Cc_1 \cap E_A \}, \\
\Delta(\Cd_1 / \Cd) &= \{ A : \Cc \cap E_A = \Cc_1 \cap E_A \}.
\end{align*}
Moreover, for ${\bar A} = \{1,2,\ldots,n\} \backslash A,$ 
\begin{align*}
\Gamma(\Cc / \Cc_1) &= \{ A : {\bar A} \in \Delta(\Cd_1 / \Cd) \}, \\
\Delta(\Cc / \Cc_1) &= \{ A : {\bar A} \in \Gamma(\Cd_1 / \Cd) \}.
\end{align*}
\end{theorem}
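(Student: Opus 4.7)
My plan is to derive the four identities from the two structural lemmas just proved: Lemma \ref{L:DD1} translates ``qualified'' into a statement about the nonexistence of words in $\Cd_1\setminus\Cd$ vanishing on $A$, and Lemma \ref{L:dual} converts a vanishing condition on one side of the dual extension into an existence condition on the other. Putting these together should give each of the four characterizations essentially by substitution, so the work is almost entirely bookkeeping.

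For the first pair of identities I would start from the definition: $A\in\Gamma(\Cd_1/\Cd)$ means that the shares $\{s_i : i\in A\}$ determine the coefficient $\lambda$ in $s=y+\lambda y_1$. By Lemma \ref{L:DD1}, this is equivalent to saying no vector in $\Cd_1\setminus\Cd$ is zero at every coordinate of $A$, that is, no vector in $\Cd_1\setminus\Cd$ has support contained in $\bar A$. Applying Lemma \ref{L:dual} with the partition $I=A$ and $J=\bar A$, this in turn is equivalent to the existence of a vector $r\in\Cc\setminus\Cc_1$ with support in $A$, which is exactly the statement $\Cc\cap E_A\neq\Cc_1\cap E_A$ (since $\Cc_1\cap E_A\subseteq\Cc\cap E_A$, with equality failing precisely when such an $r$ exists). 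The $\Delta$ identity then follows by taking complements in $2^{\{1,\ldots,n\}}$.

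For the second pair of identities I would exploit the symmetry between the extension $\Cc/\Cc_1$ and its dual $\Cd_1/\Cd$. Applying the first identity, now with the roles of $(\Cc,\Cc_1)$ and $(\Cd_1,\Cd)$ interchanged, yields $\Gamma(\Cc/\Cc_1)=\{A:\Cd_1\cap E_A\neq\Cd\cap E_A\}$. To re-express the right-hand side in terms of $\Cc$ and $\Cc_1$, I would invoke Lemma \ref{L:dual} once more, this time with $I=\bar A$ and $J=A$: existence of $r\in\Cc\setminus\Cc_1$ with support in $\bar A$ is equivalent to nonexistence of $s\in\Cd_1\setminus\Cd$ with support in $A$, i.e.\ $\Cd_1\cap E_A=\Cd\cap E_A$. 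Negating, $\Cd_1\cap E_A\neq\Cd\cap E_A$ iff $\Cc\cap E_{\bar A}=\Cc_1\cap E_{\bar A}$ iff $\bar A\in\Delta(\Cd_1/\Cd)$, giving the stated formula for $\Gamma(\Cc/\Cc_1)$; complementation handles $\Delta(\Cc/\Cc_1)$.

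The only delicate point, and the one I would double-check carefully when writing it out, is keeping straight which of the two partition-sets $I,J$ should play the role of ``support'' versus ``vanishing locus'' in each application of Lemma \ref{L:dual}, since the lemma has the two extensions entering on opposite sides of the complement. Beyond that, there is no real obstacle: every equivalence above is immediate from an already-proved lemma or from the definition of $E_A$ and of a qualified set.
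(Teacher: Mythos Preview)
Your proposal is correct and follows exactly the route the paper takes: the paper's proof consists solely of the citation ``Lemma~\ref{L:DD1} and Lemma~\ref{L:dual},'' and what you have written is precisely the unpacking of how those two lemmas combine to yield each of the four identities. Your bookkeeping with the partition $I,J$ is accurate in both applications of Lemma~\ref{L:dual}.
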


\begin{proof}
Lemma \ref{L:DD1} and Lemma \ref{L:dual}.
\end{proof}

\begin{corollary} \label{C:ssbounds}
The smallest qualified subset for $\Cd_1/\Cd$ is of size
\[
\min \{ |A| : A \in \Gamma(\Cd_1 / \Cd) \} = d(\Cc/\Cc_1).
\]
The largest unqualified subset for $\Cd_1/\Cd$ is of size
\[
\max \{ |A| : A \in \Delta(\Cd_1 / \Cd) \} = n-d(\Cd_1/\Cd),
\]
\end{corollary}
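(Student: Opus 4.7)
The plan is to derive both identities directly from Theorem~\ref{T:ssbounds}, which characterizes the qualified and unqualified subsets of $\Cd_1/\Cd$ in terms of intersections of the primal codes $\Cc$ and $\Cc_1$ with the support subspaces $E_A$.

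For the first identity, I would start from the characterization
\[
\Gamma(\Cd_1/\Cd) = \{ A : \Cc \cap E_A \neq \Cc_1 \cap E_A \}
\]
and note that $\Cc \cap E_A \neq \Cc_1 \cap E_A$ is equivalent to the existence of a word $c \in \Cc \setminus \Cc_1$ with $\mathrm{supp}(c) \subseteq A$. Hence the smallest $|A|$ with $A \in \Gamma(\Cd_1/\Cd)$ equals the smallest support size of a vector in $\Cc \setminus \Cc_1$, which is exactly $d(\Cc/\Cc_1)$ by the definition given at the start of Section~\ref{S:CosetsLinearCodes}.

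For the second identity, I would use the complementary relation from Theorem~\ref{T:ssbounds}, namely
\[
A \in \Delta(\Cd_1/\Cd) \iff \bar A \in \Gamma(\Cc/\Cc_1),
\]
which (after substituting $A \mapsto \bar A$) follows from $\Gamma(\Cc/\Cc_1) = \{ A : \bar A \in \Delta(\Cd_1/\Cd) \}$. Taking the maximum of $|A|$ over $\Delta(\Cd_1/\Cd)$ is then the same as taking $n$ minus the minimum of $|\bar A|$ over $\Gamma(\Cc/\Cc_1)$. Now I would apply the first identity already established, but with the roles of the extension and its dual reversed: this gives
\[
\min \{ |B| : B \in \Gamma(\Cc/\Cc_1) \} = d(\Cd_1/\Cd),
\]
so the desired maximum equals $n - d(\Cd_1/\Cd)$.

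There is no substantial obstacle: once Theorem~\ref{T:ssbounds} is in hand, both parts are a translation between the support description of $\Gamma$ and the Hamming weight definition of $d(\cdot/\cdot)$, together with complementation of coordinate sets. The only point that deserves care is the symmetric use of the first identity on the dual pair $\Cc/\Cc_1$ (whose dual is $\Cd_1/\Cd$), which is legitimate because Theorem~\ref{T:ssbounds} is formulated symmetrically in the two extensions.
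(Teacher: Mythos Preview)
Your proposal is correct and matches the paper's approach: the corollary is stated without proof immediately after Theorem~\ref{T:ssbounds}, and your argument simply spells out the intended derivation from that theorem. The only minor remark is that Theorem~\ref{T:ssbounds} as written is not literally symmetric in its formulation, but the underlying setup (dual extensions $\Cc/\Cc_1$ and $\Cd_1/\Cd$) is, so invoking the first identity with the roles swapped is indeed legitimate.
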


\section{Algebraic geometric codes} \label{S:AGCodes}

Let $X/\ff$ be an algebraic curve (absolutely irreducible, smooth,
projective) of genus $g$ over a finite field $\ff$. Let $\ff(X)$
be the function field of $X/\ff$ and let $\Omega(X)$ be the
module of rational differentials of $X/\ff$. 
Given a divisor $E$ on $X$ defined over $\ff$, let $L(E)$ denote the vector space over
$\ff$ of functions $f \in \ff(X) \backslash\{0\}$ with
$(f)+E\geq 0$ together with the zero function. Let $\Omega(E)$ denote the vector space over
$\ff$ of differentials $\omega \in \Omega(X) \backslash\{0\}$ with
$(\omega) \geq E$ together with the zero differential. Let $K$ represent
the canonical divisor class. \\

For $n$ distinct rational points $P_1, \ldots, P_n$ on $X$ and for disjoint divisors
$D=P_1+\cdots+P_n$ and $G$, the geometric Goppa codes $C_L(D,G)$ and $C_\Omega(D,G)$
are defined as the images of the maps
\begin{align*}
\alpha _L ~:~ &L(G)~~\longrightarrow~~\ff^{\,n}, ~~f \mapsto (\,f(P_1), \ldots, f(P_n) \,), \\
\alpha _\Omega ~:~ &\Omega(G-D)~~\longrightarrow~~\ff^{\,n}, 
~~\omega \mapsto (\,\Res_{P_1}(\omega), \ldots, \Res_{P_n}(\omega) \,).
\end{align*}
The maps establish isomorphisms $L(G)/L(G-D) \simeq C_L(D,G)$ and 
$\Omega(G-D)/\Omega(G) \simeq C_\Omega(D,G).$ With the Residue theorem, the images 
are orthogonal subspaces of $\ff^n$. With the Riemann-Roch theorem they are maximal
orthogonal subspaces. \\

There exists a nonzero word in $C_L(D,G)$ with support in $A$, for $0 \leq A \leq D$,
if and only if $L(G-D+A)/L(G-D) \neq 0.$ There exists a nonzero word in $C_\Omega(D,G)$ with support in $A$,
 for $0 \leq A \leq D$, if and only if $\Omega(G-A)/\Omega(G) \neq 0$ if and only if
$L(K-G+A) / L(K-G) \neq 0.$
  
\begin{proposition} \label{P:agd}
\begin{align*}
&d(C_L(D,G)) = \min \{ \deg A : 0 \leq A \leq D \;|\; L(A-C) \neq L(-C) \}, \quad \text{for $C = D-G.$} \\
&d(C_\Omega(D,G)) = \min \{ \deg A : 0 \leq A \leq D \;|\; L(A-C) \neq L(-C) \}, \quad \text{for $C = G-K.$} 
\end{align*}
\end{proposition}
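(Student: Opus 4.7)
The plan is to combine the two characterizations stated immediately before the proposition with the elementary observation that the minimum distance of a code equals the minimum size of the support of a nonzero codeword. Divisors $A$ with $0 \le A \le D$ are identified with subsets of $\{P_1,\ldots,P_n\}$ via $A \leftrightarrow \mathrm{supp}\,A$, so that $\deg A$ coincides with the cardinality of that subset.

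First I would establish, for $\Cc$ equal to either $C_L(D,G)$ or $C_\Omega(D,G)$, that
\[
d(\Cc) \;=\; \min\{\,\deg A \,:\, 0 \le A \le D,\; \Cc \text{ contains a nonzero word with support in } A\,\}.
\]
The inequality $\ge$ holds by taking $A$ to be the support of a minimum-weight codeword, giving $\deg A = d(\Cc)$; the inequality $\le$ holds because any nonzero word whose support is contained in $A$ has weight at most $\deg A$, so the minimum weight is bounded above by $\deg A$ for every such admissible $A$.

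Next I would invoke the two divisor-theoretic criteria already recorded in the text: a nonzero word of $C_L(D,G)$ has support in $A$ iff $L(G-D+A) \neq L(G-D)$, and a nonzero word of $C_\Omega(D,G)$ has support in $A$ iff $L(K-G+A) \neq L(K-G)$. These are consequences of the isomorphisms $L(G)/L(G-D) \simeq C_L(D,G)$ and $\Omega(G-D)/\Omega(G) \simeq C_\Omega(D,G)$, together with the Serre duality identification $\Omega(E) \simeq L(K-E)$. Combining these criteria with the displayed expression for $d(\Cc)$ above yields the two formulas after the substitutions $C = D-G$ (functional case) and $C = G-K$ (differential case), which rewrite both conditions uniformly as $L(A-C) \neq L(-C)$.

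The statement is essentially a bookkeeping translation, so there is no genuine obstacle. The only point deserving a brief justification is the reduction from ``support equal to $A$'' to ``support contained in the point-set of $A$'', which is exactly what the double-inequality argument in the first step provides; once that is in place, the rest is a direct citation of the preceding paragraph.
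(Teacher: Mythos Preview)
Your proposal is correct and matches the paper's approach: the proposition is stated in the paper without an explicit proof, immediately after the two characterizations you invoke, and is evidently meant to follow directly from them together with the elementary observation that the minimum distance equals the minimum size of a support of a nonzero codeword. Your double-inequality argument and the substitutions $C=D-G$, $C=G-K$ fill in exactly the bookkeeping the paper leaves implicit.
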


\begin{theorem}(Goppa bound) \label{T:Goppa}
A nonzero word in $C_L(D,G)$ has weight $w \geq \deg\,(D-G)$.
A nonzero word in $C_\Omega(D,G)$ has weight $w \geq \deg\,(G-K)$.
\end{theorem}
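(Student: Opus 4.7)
The plan is to deduce the Goppa bound directly from Proposition \ref{P:agd}, which already characterizes the minimum weight of a nonzero codeword as the minimum degree of an effective divisor $A \leq D$ with $L(A-C) \neq L(-C)$, where $C = D-G$ in the case of $C_L(D,G)$ and $C = G-K$ in the case of $C_\Omega(D,G)$.

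First I would observe the elementary inclusion $L(-C) \subseteq L(A-C)$, which holds because $A \geq 0$. The main step is then the contrapositive: if $\deg(A-C) < 0$, then any nonzero $f \in L(A-C)$ would satisfy $\deg((f) + A - C) \geq 0$, contradicting $\deg((f)+A-C) = \deg(A-C) < 0$. Hence $L(A-C) = 0$, and by the inclusion above $L(-C) = 0$ as well. So $L(A-C) = L(-C) = 0$, contradicting $L(A-C) \neq L(-C)$.

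Consequently, whenever Proposition \ref{P:agd} produces a witness $A$ for the minimum distance, one has $\deg(A-C) \geq 0$, that is $\deg A \geq \deg C$. Specializing $C = D-G$ yields $w \geq \deg(D-G)$ for $C_L(D,G)$, and specializing $C = G-K$ yields $w \geq \deg(G-K)$ for $C_\Omega(D,G)$, which completes the proof.

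There is really no hard step here: the only subtlety is noticing that $L(A-C) \neq L(-C)$ forces the degree of $A-C$ to be nonnegative, which follows from the trivial fact that divisors of negative degree have no global sections. The proof is therefore a direct one-line consequence of Proposition \ref{P:agd} together with this degree observation.
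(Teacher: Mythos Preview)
Your argument is correct: from Proposition~\ref{P:agd}, any witness $A$ to the minimum distance satisfies $L(A-C)\neq L(-C)$; since $L(-C)\subseteq L(A-C)$ and a divisor of negative degree has no nonzero global sections, this forces $\deg(A-C)\geq 0$, hence $\deg A\geq \deg C$.

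The paper itself does not supply a proof of Theorem~\ref{T:Goppa}; the Goppa bound is quoted as a standard result. The only derivation the paper hints at is the remark following Theorem~\ref{T:ABZcodes}, where it is observed that taking $Z=0$ in the ABZ bound and applying Riemann--Roch to the pair $A$, $B=K+C-A$ yields
\[
l(A)-l(A-C)+l(B)-l(B-C)=\bigl(l(A)-l(K-A)\bigr)+\bigl(l(B)-l(K-B)\bigr)=\deg C.
\]
Your route via Proposition~\ref{P:agd} is more direct and avoids Riemann--Roch entirely, needing only the vanishing of $L$ on divisors of negative degree; the ABZ derivation, by contrast, exhibits the Goppa bound as the bottom of a hierarchy of bounds, which is the structural point the paper is after.
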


The following bound improves on the Goppa bound in special cases (\cite{CarTor05}, \cite{MahMat06}, \cite{LunMcc06}).

\begin{theorem} \label{T:floor} (Floor bound)
Let $G = K+C = A+B+Z,$ for $Z \geq 0$ such that $L(A+Z) = L(A)$ and $L(B+Z) = L(B)$.
For $D$ with $D \cap Z = \emptyset$, a nonzero word in $C_\Omega(D,G)$ has weight 
at least $\deg\, C + \deg\, Z$.
\end{theorem}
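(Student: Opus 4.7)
My plan is to combine the Residue theorem with the floor hypotheses to extract orthogonality relations for a candidate codeword, and then apply an AB--style form of the coset bound (Theorem \ref{T:cosetbound}) to force the weight lower bound. By Proposition \ref{P:agd}, a nonzero word of weight $w$ in $C_\Omega(D,G)$ corresponds to a divisor $A'$ with $0\le A'\le D$, $\deg A'=w$, together with a differential $\omega\in\Omega(G-A')\setminus\Omega(G)$ whose residues $c_P=\Res_P(\omega)$ are nonzero at every $P\in A'$.

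The core observation is a residue identity. For any $f\in L(A+Z)=L(A)$ and $g\in L(B+Z)=L(B)$, the differential $fg\omega$ satisfies $(fg\omega)\ge -A-B+(G-A')=Z-A'$. Since $A'\le D$ is disjoint from $Z$ (and, using $D\cap G=\emptyset$, from $A+B$ as well), a case-by-case check of valuations shows that $fg\omega$ is regular on $X\setminus A'$, vanishes on $Z$, and has at worst a simple pole at each $P\in A'$ with residue $f(P)g(P)c_P$. The Residue theorem therefore yields
\[
\sum_{P\in A'}f(P)g(P)c_P=0.
\]

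The floor conditions enter in two ways. First, they permit an enlarged test space: the residue computation forces the test functions to satisfy $f\in L(A)$ and $g\in L(B)$ (so that $fg\omega$ vanishes on $Z$), yet by hypothesis these spaces have the larger dimensions $l(A+Z)$ and $l(B+Z)$. Second, Riemann--Roch converts the hypothesis into a dimension count
\[
l(A+Z)+l(B+Z)\ge \deg(A+B)+2\deg Z+2-2g=\deg C+\deg Z,
\]
which is precisely the $\deg Z$ improvement over what one obtains from a single $L$--space in the Goppa bound.

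To conclude I would apply Theorem \ref{T:cosetbound} in its AB special case with vectors $a_i$ and $b_j$ obtained by restricting to $A'$ ordered bases of $L(A+Z)$ and $L(B+Z)$, filtered by pole order at a chosen reference point $Q\notin D\cup\operatorname{supp}(G)$: the residue identity supplies the required orthogonality $a_i\ast b_j\in\Cd$, and the dimension count forces $w\ge \deg C+\deg Z$. The hard step is to verify that the filtration first escapes the dual code at position $w+1=\deg C+\deg Z+1$; this is the principal obstacle. The cleanest route, adopted in the sequel, is to first establish the ABZ bound for codes (Theorem \ref{T:ABZcodes}) from the AB special case of Theorem \ref{T:cosetbound} and then obtain Theorem \ref{T:floor} as an immediate corollary, thereby decoupling the filtration combinatorics from the residue computation.
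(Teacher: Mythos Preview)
Your residue identity is correct and is exactly the starting point the paper uses. The route you land on at the end---deduce Theorem~\ref{T:floor} from Theorem~\ref{T:ABZcodes} and then specialize via Riemann--Roch---is also precisely what the paper does. But two things in your sketch are off.

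First, you conflate the AB bound with the Shift/Coset bound. Theorem~\ref{T:cosetbound} is the Shift bound, and Theorem~\ref{T:ABZcodes} is \emph{not} obtained from it; the paper proves Theorem~\ref{T:ABZcodes} with the genuinely simpler AB argument. From your residue identity one defines on $\ff^{|A'|}$ the nondegenerate bilinear form $\langle a,b\rangle=\sum_{P\in A'}a_Pb_Pc_P$ and observes that the evaluation images of $L(A)$ and $L(B)$ on $A'$ are orthogonal, hence
\[
w=\deg A'\ \geq\ \bigl(l(A)-l(A-A')\bigr)+\bigl(l(B)-l(B-A')\bigr).
\]
No filtration by pole order, no ``escape at position $w+1$'', and no appeal to Theorem~\ref{T:cosetbound} are needed; the obstacle you identified disappears once you stop trying to force the Shift bound.

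Second, your dimension count $l(A+Z)+l(B+Z)\geq\deg C+\deg Z$ is not the inequality that finishes the proof. The restrictions to $A'$ can have kernel, so the AB step only yields the displayed inequality above, with $l(A-A')$ and $l(B-A')$ still present. The missing ingredient---which you do not mention---is that a nonzero word in $C_\Omega(D,G)$ with support $A'$ forces $L(A'-C)\neq 0$; choosing $E\geq 0$ with $A'\sim C+E$ gives $l(A-A')=l(A-C-E)\leq l(A-C)$ and similarly for $B$. Only then do the floor hypotheses plus Riemann--Roch convert $l(A)-l(A-C)+l(B)-l(B-C)$ into $\deg C+\deg Z$ (as an equality, not merely an inequality), via $A-C=K-B-Z$, $B-C=K-A-Z$, and $l(A+Z)-l(K-A-Z)+l(B+Z)-l(K-B-Z)=\deg(A+Z)+\deg(B+Z)+2-2g$.
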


Most algebraic bounds for the minimum distance of a linear code rely on one of 
two basic arguments. In the paper \cite{LinWil86} on cyclic codes they were named
the AB bound and the Shift bound. We obtain the following bound, which includes the
floor bound, using the AB bound argument in combination with the Goppa bound.

\begin{theorem} \label{T:ABZcodes} (ABZ bound for codes)
Let $G = K+C = A+B+Z,$ for $Z \geq 0$. For $D$ with $D \cap Z = \emptyset$, a 
nonzero word in $C_\Omega(D,G)$ has weight $w \geq l(A)-l(A-C)+l(B)-l(B-C).$ 
\end{theorem}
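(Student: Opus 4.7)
The target bound arises by combining an AB-type orthogonality with a multiplication-by-$h$ step that replaces the support $S$ of a given codeword with the divisor $C$ in the dimension count. The AB input is the inclusion $L(A) \cdot L(B) \subseteq L(A+B) \subseteq L(G)$ (using $Z \geq 0$), so that $\alpha_L(fg) \in C_L(D,G) = C_\Omega(D,G)^{\perp}$ for every $f \in L(A)$ and $g \in L(B)$. The Goppa-flavoured input is the characterization that $c \in C_\Omega(D,G)$ is nonzero with support $S$ iff $L(S-C) \neq L(-C)$, where $C = G - K$ (the direct content of Proposition \ref{P:agd}).

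I would fix a nonzero codeword $c$ of weight $w$ with support $S$ and, without loss of generality, replace $A$ and $B$ by linearly equivalent divisors so that both have support disjoint from $D$ (adjusting $A$ by $(u)$ and $B$ by $-(u)$ preserves $A+B+Z = G$, leaves $Z$ alone, and does not change any of the $l(\cdot)$ appearing in the statement). The residue identity $\sum_{P \in S} f(P)\,g(P)\,c_P = 0$ then says that the images of
\[
L(A) \longrightarrow \ff^S,\ f \longmapsto (f(P)\,c_P)_{P \in S}, \qquad L(B) \longrightarrow \ff^S,\ g \longmapsto (g(P))_{P \in S},
\]
are mutually orthogonal subspaces of $\ff^S$. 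Their kernels are exactly $L(A-S)$ and $L(B-S)$, which yields the preliminary AB-type bound
\[
w \;\geq\; \bigl(l(A) - l(A-S)\bigr) + \bigl(l(B) - l(B-S)\bigr).
\]

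The main obstacle is to trade $S$ for $C$ on the right-hand side, and this is where the Goppa input enters. Pick any $h \in L(S-C) \setminus L(-C)$ guaranteed by the characterization above; then $h \neq 0$. For $f \in L(A-S)$ one has $(fh) \geq (S - A) + (C - S) = C - A$, so $fh \in L(A-C)$. Thus multiplication by $h$ is an injection $L(A-S) \hookrightarrow L(A-C)$, giving $l(A-S) \leq l(A-C)$; the same argument works verbatim for $B$. Substituting these inequalities into the preliminary bound produces $w \geq l(A) - l(A-C) + l(B) - l(B-C)$, as required. The only nontrivial insight is the multiplication-by-$h$ step; everything else is a dimension count of evaluation kernels.
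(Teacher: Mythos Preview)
Your proof is correct and follows essentially the same route as the paper's: both obtain the preliminary inequality $w \geq (l(A)-l(A-S))+(l(B)-l(B-S))$ from the orthogonality of the evaluation images in $\ff^S$, and then pass from $S$ to $C$ using a nonzero $h \in L(S-C)$. The paper compresses the last step into the single phrase ``Together with $L(D'-C) \neq 0$,'' whereas you spell out the multiplication-by-$h$ injection explicitly; note that only $h \neq 0$ is used, so the weaker condition $L(S-C)\neq 0$ (rather than $L(S-C)\neq L(-C)$) already suffices.
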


\begin{proof}
We may assume that $A$ and $B$ are disjoint from $D$.
Since $Z \geq 0$ and $D \cap Z = \emptyset$, the code $C_L(D,G)$ contains the code $C_L(D,A+B)$ as a subcode. 
Thus, for a word $c \in C_\Omega(D,G)$, and for words $a \in C_L(D,A)$ and $b \in C_L(D,B)$, if $c$ has support $D'$ then
$\sum_{P \in D'} a_P b_P c_P = 0.$ The last orthogonality holds for all $a \in C_L(D',A)$ and $b \in C_L(D',B)$,
so that $\dim C_L(D',A) + \dim C_L(D',B) \leq \deg D',$ and $\deg D' \geq l(A)-l(A-D') + l(B)-l(B-D').$
Together with $L(D'-C) \neq 0$, $\deg D' \geq l(A)-l(A-C) + l(B) - l(B-C).$
\end{proof}   

Example \ref{E:abz} gives a code for which the ABZ bound improves both the floor bound
and the order bound. It is easy to see, using the Riemann-Roch theorem, that the choice $Z=0$ returns the
Goppa bound. 
Improvements of the Goppa bound are obtained only if the divisors $A, B,$ and $Z,$ are carefully chosen. 
For the special case $L(A+Z)=L(A)$ and $L(B+Z)=L(B)$, we recover the floor
bound. In that case, for $K+C=A+B+Z$, 
\begin{align*}
  &l(A)-l(A-C)+l(B)-l(B-C) \\ 
=~ &l(A+Z)-l(K-B-Z)+l(B+Z)-l(K-A-Z) \\ 
=~ &\deg (A+Z) + \deg (B+Z) + 2 - 2g = \deg C + \deg Z.
\end{align*}

\section{Cosets of algebraic geometric codes} \label{S:CosetsAGCodes}

Let $\Cd = C_\Omega(D,G)$ and $\Cc = C_L(D,G)$ be dual algebraic geometric codes.
For a point $P$ disjoint from $D$, let
\begin{align*}
\Cd_1 / \Cd = C_\Omega(D,G-P) / C_\Omega(D,G), \\
\Cc / \Cc_1 = C_L(D,G) / C_L(D,G-P), 
\end{align*}
be dual extensions of codes. When $\dim \Cc / \Cc_1 = \dim \Cd_1 / \Cd = 1$,
the extensions can be used for secret sharing as described in Section \ref{S:CosetsLinearCodes}. 
Theorem \ref{T:ssbounds} describes the parties that can recover the secret for the
extension $\Cd_1 / \Cd$ as the subsets $0 \leq A \leq D$ that support a word in $\Cc / \Cc_1.$
The formulation in terms of divisors is given in Proposition \ref{P:O}, with a similar
result for the extension $\Cc / \Cc_1$ in Proposition \ref{P:L}. 
As additional motivation, we give a natural choice for the secret
for each of the extensions $\Cd_1 / \Cd$ and $\Cc / \Cc_1$, and we describe directly the 
qualified parties that can determine the secret, in Lemma \ref{L:O} and Lemma \ref{L:L},
respectively. The propositions can then also be obtained from the lemmas. \\

Let $P$ have multiplicity $e$ in $G$, and let $t$ be a fixed local parameter for $P$. 
For $\dim \Cd_1 / \Cd = 1$, there exists a natural isomorphism
$\Omega(G-D-P)/\Omega(G-D) \simeq \Cd_1 / \Cd \simeq \ff$ that maps $\omega \in \Omega(G-D-P)/\Omega(G-D)$ to
$\Res_P(t^{-e} \omega).$
For $\dim \Cc / \Cc_1 = 1,$ there exists a natural isomorphism $L(G)/L(G-P) \simeq \Cc / \Cc_1 \simeq \ff$,
that maps $f \in L(G)/L(G-P)$ to $(f t^e)(P).$ 

\begin{lemma} \label{L:O}
For $\omega \in \Omega(G-D-P)$, the residue $\Res_P(t^{-e}\omega)(P)$ is uniquely determined
by the values $\{ f(P) : P \in A \}$, for $0 \leq A \leq D,$ if and only if 
$\Omega(G-D+A-P) = \Omega(G-D+A)$. 
\end{lemma}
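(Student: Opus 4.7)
The plan is to deduce Lemma \ref{L:O} from Lemma \ref{L:DD1} by translating its linear-algebraic criterion into the language of differentials via the identification $\Cd_1/\Cd \simeq \Omega(G-D-P)/\Omega(G-D)$ described just above the lemma. I would start by fixing a generator $y_1 \in \Cd_1 \setminus \Cd$ coming from some $\omega_0 \in \Omega(G-D-P)$ with $\Res_P(t^{-e}\omega_0) \neq 0$; then for any $\omega \in \Omega(G-D-P)$, the projection of $\alpha_\Omega(\omega)$ onto $\langle y_1 \rangle$ is a fixed nonzero scalar multiple of $\Res_P(t^{-e}\omega)$. Lemma \ref{L:DD1} then tells us that this scalar is determined by the coordinates $\{(\alpha_\Omega(\omega))_{P_i} : P_i \in A\}$ if and only if no codeword in $\Cd_1 \setminus \Cd$ vanishes on all positions of $A$.

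Next, I would convert the "vanishes on $A$" condition into a pole-order condition. A codeword $\alpha_\Omega(\omega) \in \Cd_1$ is zero at each $P_i \in A$ exactly when $\Res_{P_i}(\omega) = 0$, which combined with $\omega \in \Omega(G-D-P)$ improves the pole-order bound at each such $P_i$ from $-1$ to $0$, giving $\omega \in \Omega(G-D+A-P)$. The same reasoning inside $\Cd$ gives $\omega \in \Omega(G-D+A)$. Hence the subspace of $\Cd_1$ of codewords vanishing on $A$ has dimension $\dim \Omega(G-D+A-P) - \dim \Omega(G-P)$, while its intersection with $\Cd$ has dimension $\dim \Omega(G-D+A) - \dim \Omega(G)$, and the criterion from the previous step is the equality of these two numbers.

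The step I expect to require the most care is showing that this dimension equality simplifies to the clean assertion $\Omega(G-D+A-P) = \Omega(G-D+A)$. The key observation is that the standing hypothesis $\dim \Cd_1/\Cd = 1$ already forces $\Omega(G-P) = \Omega(G)$: writing
\[
\dim \Cd_1 - \dim \Cd = \bigl[\dim \Omega(G-D-P) - \dim \Omega(G-D)\bigr] - \bigl[\dim \Omega(G-P) - \dim \Omega(G)\bigr],
\]
each bracket is a one-point increment lying in $\{0,1\}$, so their difference being $1$ pins the first bracket to $1$ and the second to $0$. Cancelling $\dim \Omega(G-P) = \dim \Omega(G)$ reduces the criterion to $\dim \Omega(G-D+A-P) = \dim \Omega(G-D+A)$, and since $\Omega(G-D+A) \subseteq \Omega(G-D+A-P)$ is automatic, equality of dimensions is the same as equality of spaces. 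The main obstacles to watch are the clean identification of "zero at $A$" codewords with the image of $\Omega(G-D+A-P)$, which uses that $\Omega(G-P)$ is precisely the kernel of $\alpha_\Omega$ on $\Omega(G-D-P)$, and the bookkeeping in the bracket argument above.
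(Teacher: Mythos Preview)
Your proof is correct. The paper states Lemma~\ref{L:O} without proof, so there is no explicit argument to compare against; the lemma is evidently meant as a direct observation, and one can argue more simply than you do. By linearity, $\Res_P(t^{-e}\omega)$ is determined by the residues $\{\Res_{P_i}(\omega) : P_i \in A\}$ if and only if every $\omega \in \Omega(G-D-P)$ with vanishing residues at all points of $A$ also has $\Res_P(t^{-e}\omega)=0$. The first condition says $\omega \in \Omega(G-D+A-P)$; the second, since such an $\omega$ has order $\geq e-1$ at $P$, says $\omega$ has order $\geq e$ at $P$, i.e.\ $\omega \in \Omega(G-D+A)$. Thus the criterion is $\Omega(G-D+A-P) \subseteq \Omega(G-D+A)$, and the reverse inclusion is automatic.

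Your route through Lemma~\ref{L:DD1} is sound and makes the link to the coding-theoretic setup explicit, but it forces you to invoke the standing hypothesis $\dim \Cd_1/\Cd = 1$ (in order to conclude $\Omega(G-P)=\Omega(G)$ and cancel the kernel terms), whereas the direct argument above shows the lemma holds without that hypothesis. In the paper's overall logic, Lemma~\ref{L:O} is offered as an independent direct description, and it is Proposition~\ref{P:O} that then combines it with the linear-algebraic criterion of Theorem~\ref{T:ssbounds} (which in turn rests on Lemma~\ref{L:DD1}); your approach essentially merges these two steps into one.
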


\begin{proposition} \label{P:O}
For the extension of codes $\Cd_1 / \Cd = C_\Omega(D,G-P) / C_\Omega(D,G)$, where $D=P_1+\cdots+P_n$ is a sum of 
$n$ distinct points, $G$ is a divisor disjoint from $D$, and $P$ is a point disjoint from $D$, 
\begin{align*}
\Gamma(\Cd_1 / \Cd) &= \{ 0 \leq A \leq D : \Cc \cap E_A \neq \Cc_1 \cap E_A \}, \\ 
                    &= \{ 0 \leq A \leq D :  L(G-D+A) \neq L(G-D+A-P) \}. \\
\Delta(\Cd_1 / \Cd) &= \{ 0 \leq A \leq D : \Cc \cap E_A = \Cc_1 \cap E_A \}, \\ 
                    &= \{ 0 \leq A \leq D :  L(G-D+A) = L(G-D+A-P) \}. 
\end{align*}
\end{proposition}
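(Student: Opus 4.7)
The proof breaks naturally into the two stated equalities, which I would address in turn.

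The first equality is a direct transcription of Theorem \ref{T:ssbounds}. Because $D = P_1 + \cdots + P_n$ is a sum of distinct points, index subsets $I \subseteq \{1, \ldots, n\}$ correspond bijectively to effective subdivisors $0 \le A \le D$ via $I \leftrightarrow \sum_{i \in I} P_i$, and under this identification the divisor $A$ and the coordinate set $A$ define the same subspace $E_A \subseteq \ff^n$. Theorem \ref{T:ssbounds} then gives the $\Cc \cap E_A$ characterization of both $\Gamma(\Cd_1 / \Cd)$ and $\Delta(\Cd_1 / \Cd)$ verbatim.

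For the second equality I would realize each of $\Cc \cap E_A$ and $\Cc_1 \cap E_A$ as a Riemann--Roch quotient. A codeword in $\Cc = C_L(D,G)$ arising from $f \in L(G)$ has support in $A$ iff $f$ vanishes at every $P_i \not\in A$, equivalently $f \in L(G-D+A)$. Since $G$ is disjoint from $D$, the kernel of $\alpha_L$ restricted to $L(G)$ is $L(G-D)$, so $\Cc \cap E_A \cong L(G-D+A)/L(G-D)$, and analogously $\Cc_1 \cap E_A \cong L(G-P-D+A)/L(G-P-D)$. The surrounding secret-sharing setup assumes $\dim \Cc/\Cc_1 = \dim \Cd_1/\Cd = 1$; writing this dimension as
\[
\bigl(l(G)-l(G-P)\bigr) - \bigl(l(G-D)-l(G-P-D)\bigr),
\]
with each difference in $\{0,1\}$, forces $l(G-P-D) = l(G-D)$, hence $L(G-P-D) = L(G-D)$. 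Once the two evaluation kernels coincide, the inclusion $\Cc_1 \cap E_A \subseteq \Cc \cap E_A$ is an equality exactly when $l(G-P-D+A) = l(G-D+A)$, i.e.\ when $L(G-D+A) = L(G-D+A-P)$. The $\Delta$ statement is the complementary assertion within $\{0 \le A \le D\}$.

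The main issue is more bookkeeping than obstacle: one must carefully identify the two intersections $\Cc \cap E_A$ and $\Cc_1 \cap E_A$ as the displayed quotients, and observe that the $\dim = 1$ hypothesis is precisely what collapses $L(G-P-D)$ onto $L(G-D)$, so that the equality of code-intersections becomes a single Riemann--Roch gap condition at $P$. A parallel route, hinted at in the paragraph preceding the proposition, would combine Lemma \ref{L:O} with the Serre duality equivalence $\Omega(G-D+A-P) = \Omega(G-D+A) \iff L(G-D+A) \neq L(G-D+A-P)$, but the route via Theorem \ref{T:ssbounds} seems cleanest.
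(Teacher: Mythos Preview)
Your argument is correct. The first equality is handled exactly as in the paper, via Theorem~\ref{T:ssbounds}. For the second equality you take a different route from the paper: the paper simply invokes Lemma~\ref{L:O}, which characterizes the qualified sets by the condition $\Omega(G-D+A-P)=\Omega(G-D+A)$, and then the translation to $L(G-D+A)\neq L(G-D+A-P)$ is the standard Riemann--Roch duality you mention at the end. You instead work directly on the $C_L$-side, identifying $\Cc\cap E_A\cong L(G-D+A)/L(G-D)$ and $\Cc_1\cap E_A\cong L(G-D+A-P)/L(G-D-P)$, and then use the ambient hypothesis $\dim\Cc/\Cc_1=1$ to collapse the two kernels. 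This is a genuine alternative: it bypasses Lemma~\ref{L:O} and the $\Omega$-to-$L$ conversion entirely, at the cost of making explicit the kernel computation and the use of $\dim\Cc/\Cc_1=1$ (which in the paper's route is buried inside the setup for Lemma~\ref{L:O}). Both arguments are short; yours is somewhat more self-contained, while the paper's fits its narrative of deriving the two descriptions from the two independent sources Theorem~\ref{T:ssbounds} and Lemma~\ref{L:O}.
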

\begin{proof}
In each case, the two descriptions are clearly equivalent. The first description of $\Gamma(\Cd_1 / \Cd)$ uses Theorem \ref{T:ssbounds}. 
The second description uses Lemma \ref{L:O}.
\end{proof}

\begin{lemma} \label{L:L}
For $f \in L(G)$, the value $(t^e f)(P)$ is uniquely determined
by the values $\{ f(P) : P \in A \}$, for $0 \leq A \leq D,$ if and only if 
$L(G-A) = L(G-A-P)$.
\end{lemma}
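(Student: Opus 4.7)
The plan is to mirror the approach used for Lemma \ref{L:O}: interpret the map $f \mapsto (t^e f)(P)$ as a linear functional on $L(G)$ with kernel $L(G-P)$, and then observe that ``uniquely determined by the evaluations at $A$'' means precisely that this functional vanishes on the kernel of the evaluation map.

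First, I would note that by linearity, $(t^e f)(P)$ is determined by $\{f(Q) : Q \in \mathrm{supp}(A)\}$ if and only if for every $g \in L(G)$ satisfying $g(Q) = 0$ for all $Q \in \mathrm{supp}(A)$, one has $(t^e g)(P) = 0$. Since $A \leq D$ and $D$ is disjoint from $G$ (and from $P$), the support of $A$ does not meet that of $G$, so vanishing of $g \in L(G)$ at the points of $A$ is equivalent to $g \in L(G-A)$. Similarly, because $P$ has multiplicity $e$ in $G$ and $t$ is a local parameter at $P$, the function $t^e g$ is regular at $P$, and $(t^e g)(P) = 0$ is equivalent to $v_P(g) \geq 1-e$, i.e., $g \in L(G-P)$.

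Next, I would rephrase the condition as the inclusion $L(G-A) \subseteq L(G-P)$, and then identify this intersection-theoretically. Since $P$ does not appear in $\mathrm{supp}(A)$, the coordinate-wise minimum of $G-A$ and $G-P$ is just $G-A-P$, whence
\[
L(G-A) \cap L(G-P) = L(G-A-P).
\]
Therefore $L(G-A) \subseteq L(G-P)$ if and only if $L(G-A) = L(G-A-P)$, which gives the stated equivalence.

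There is no real obstacle here beyond bookkeeping with divisor supports; the only thing to be careful about is invoking the disjointness hypotheses ($D$ disjoint from $G$, and $P$ disjoint from $D$) at the right moments so that the identifications $\{g \in L(G) : g|_A = 0\} = L(G-A)$ and $L(G-A) \cap L(G-P) = L(G-A-P)$ are valid without having to adjust multiplicities at $P$ or at points of $A$. Once these are in place the proof is essentially a one-line translation, exactly parallel to Lemma \ref{L:O}.
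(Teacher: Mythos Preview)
Your argument is correct and is exactly the natural verification the paper leaves implicit: the paper states Lemma~\ref{L:L} without proof, treating it as the evident dual of Lemma~\ref{L:O}. Your reduction---identifying the kernel of evaluation at $A$ with $L(G-A)$, the kernel of $f \mapsto (t^e f)(P)$ with $L(G-P)$, and then using $L(G-A)\cap L(G-P)=L(G-A-P)$ via the disjointness of $P$ from $A$---is precisely what is intended.
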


\begin{proposition} \label{P:L}
For the extension of codes $\Cc / \Cc_1 = C_L(D,G) / C_L(D,G-P)$, where $D=P_1+\cdots+P_n$ is a sum of 
$n$ distinct points, $G$ is a divisor disjoint from $D$, and $P$ is a point disjoint from $D$, 
\begin{align*}
\Gamma(\Cc / \Cc_1) &= \{ 0 \leq A \leq D : \Cd_1 \cap E_A \neq \Cd \cap E_A \}, \\ 
                    &= \{ 0 \leq A \leq D :  \Omega(G-A-P) \neq \Omega(G-A) \}. \\
\Delta(\Cc / \Cc_1) &= \{ 0 \leq A \leq D : \Cd_1 \cap E_A = \Cd \cap E_A \}, \\ 
                    &= \{ 0 \leq A \leq D :  \Omega(G-A-P) = \Omega(G-A) \}. 
\end{align*}
\end{proposition}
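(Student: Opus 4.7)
The plan is to prove this by mirroring the proof of Proposition \ref{P:O}: derive the middle description from Theorem \ref{T:ssbounds} applied to the extension $\Cc/\Cc_1$ itself, derive the right-hand description from Lemma \ref{L:L}, and reconcile the two via Riemann--Roch. So the statement is essentially the ``dual'' of Proposition \ref{P:O}, with the roles of functions and differentials exchanged.

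For the middle description, I would apply Theorem \ref{T:ssbounds} directly to the pair $\Cc/\Cc_1$ and its dual extension $\Cd_1/\Cd$ (the symmetric version of what was used for Proposition \ref{P:O}), obtaining $\Gamma(\Cc/\Cc_1) = \{A : \Cd_1 \cap E_A \neq \Cd \cap E_A\}$ and the corresponding statement for $\Delta$. Intersecting with $\{0 \leq A \leq D\}$ yields the first equality on each line.

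For the right-hand description, Lemma \ref{L:L} says that the natural secret $(t^e f)(P)$ associated to $f \in L(G)/L(G-P)$ is determined by the shares $\{f(P_i) : P_i \in A\}$ if and only if $L(G-A) = L(G-A-P)$, so this condition characterises $\Gamma(\Cc/\Cc_1)$ and its negation characterises $\Delta(\Cc/\Cc_1)$. To convert the $L$-condition into the $\Omega$-condition stated in the proposition, I would use Riemann--Roch: subtracting $l(E) - i(E) = \deg E + 1 - g$ for $E = G-A$ and $E = G-A-P$ yields
\[
l(G-A) - l(G-A-P) \;=\; i(G-A) - i(G-A-P) + 1,
\]
and since $\Omega(G-A) \subseteq \Omega(G-A-P)$ the left-hand side lies in $\{0,1\}$ and equals $0$ precisely when $\Omega(G-A-P) \neq \Omega(G-A)$. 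This produces the second equality on each line.

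The only subtlety — and the main place to be careful — is the sign flip between the $L$- and $\Omega$-formulations (qualified corresponds to $L(G-A) = L(G-A-P)$ but to $\Omega(G-A-P) \neq \Omega(G-A)$); no new ideas beyond Theorem \ref{T:ssbounds}, Lemma \ref{L:L}, and Riemann--Roch are required.
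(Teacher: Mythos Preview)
Your proposal is correct and follows the paper's approach. The paper's proof reads simply ``As in Proposition \ref{P:O} but use Lemma \ref{L:L}''; your plan unpacks exactly this, invoking Theorem \ref{T:ssbounds} for the first description and Lemma \ref{L:L} for the second, and you make explicit the Riemann--Roch identity that converts the condition $L(G-A)=L(G-A-P)$ of Lemma \ref{L:L} into the condition $\Omega(G-A-P)\neq\Omega(G-A)$ appearing in the statement.
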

\begin{proof}
As in Proposition \ref{P:O} but use Lemma \ref{L:L}.
\end{proof}

The propositions are related via the dualities $A \in \Gamma(\Cd_1 / \Cd)$ 
if and only if $D-A \in \Delta(\Cc / \Cc_1)$ and $A \in \Gamma(\Cc / \Cc_1)$ if and only if $D-A \in \Delta(\Cd_1 / \Cd)$
(as Theorem \ref{T:ssbounds}). The minimal degree of a divisor $A \in \Gamma(\Cd_1/\Cd)$
or $A \in \Gamma(\Cc/\Cc_1)$ is given by the coset distance $d(\Cc / \Cc_1)$ or $d(\Cd_1/\Cd)$,
respectively (as in Corollary \ref{C:ssbounds}).

\begin{proposition} \label{P:cb}
\begin{align*}
d(\Cc / \Cc_1) &= \min \{ \deg A :  0 \leq A \leq D \;|\; L(A-C) \neq L(A-C-P) \}, \quad \text{for $C = D-G,$} \\ 
d(\Cd_1 / \Cd) &= \min \{ \deg A :  0 \leq A \leq D \;|\; L(A-C) \neq L(A-C-P) \}, \quad \text{for $C = G-K-P.$} 
\end{align*}
\end{proposition}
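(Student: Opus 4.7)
The plan is to read the two formulas directly off the already-established dictionary between qualified parties and divisors. By Corollary \ref{C:ssbounds}, the coset distance $d(\Cc/\Cc_1)$ equals the minimum size $|A|$ of a qualified set $A \in \Gamma(\Cd_1/\Cd)$, and $d(\Cd_1/\Cd)$ equals the minimum size of an $A \in \Gamma(\Cc/\Cc_1)$. Propositions \ref{P:O} and \ref{P:L} then identify these qualified sets with divisors $0 \leq A \leq D$ satisfying explicit inequalities involving $L$-spaces or $\Omega$-spaces. All that remains is a change of variables.

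First I would treat $d(\Cc/\Cc_1)$. Proposition \ref{P:O} rewrites the condition $A \in \Gamma(\Cd_1/\Cd)$ as $L(G-D+A) \neq L(G-D+A-P)$. Setting $C = D-G$ gives $G-D+A = A-C$ and $G-D+A-P = A-C-P$, so the condition becomes $L(A-C) \neq L(A-C-P)$, yielding the first formula.

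Next I would treat $d(\Cd_1/\Cd)$. Proposition \ref{P:L} rewrites $A \in \Gamma(\Cc/\Cc_1)$ as $\Omega(G-A-P) \neq \Omega(G-A)$. To put this in $L$-form, I would invoke the standard isomorphism $\Omega(E) \simeq L(K-E)$ coming from choosing any nonzero rational differential, which converts the condition to $L(K-G+A+P) \neq L(K-G+A)$. Setting $C = G-K-P$ gives $K-G+A+P = A-C$ and $K-G+A = A-C-P$, producing the second formula. No step is an obstacle: the content is entirely contained in Corollary \ref{C:ssbounds} combined with Propositions \ref{P:O} and \ref{P:L}, so the main care is just in bookkeeping the sign convention and the shift by $P$ in the second substitution $C = G-K-P$.
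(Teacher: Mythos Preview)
Your proposal is correct and matches the paper's own reasoning: the paragraph immediately preceding Proposition~\ref{P:cb} already notes that the minimal degree of a divisor in $\Gamma(\Cd_1/\Cd)$ (resp.\ $\Gamma(\Cc/\Cc_1)$) equals $d(\Cc/\Cc_1)$ (resp.\ $d(\Cd_1/\Cd)$) via Corollary~\ref{C:ssbounds}, and the proposition is stated without further proof. Your write-up simply makes explicit the substitutions $C=D-G$ and $C=G-K-P$ and the passage $\Omega(E)\simeq L(K-E)$ that the paper leaves to the reader.
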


For a given divisor $C$ and a point $P$, let 
\[
\Gamma_{P}(C) = \{ A : L(A) \neq L(A-P) \wedge L(A-C) \neq L(A-C-P) \},
\]
and let $\gamma_P(C)$ be the minimal degree for a divisor $A \in \Gamma_P(C).$ So that
$\gamma_P(C) \geq \max \{ 0, \deg C \}.$ 

\begin{theorem} \label{T:gammacoset}
For the extensions of codes $\Cd_1 / \Cd = C_\Omega(D,G-P) / C_\Omega(D,G)$ and 
$\Cc / \Cc_1 = C_L(D,G) / C_L(D,G-P)$,
\begin{align*}
A \in \Gamma(\Cd_1 / \Cd) &~\Rightarrow~ \deg A \,\geq\, \gamma_P(D-G) \,\geq\, n - \deg G. \\
A \in \Delta(\Cd_1 / \Cd) &~\Rightarrow~ \deg A \,\leq\, n - \gamma_P(G-K-P) \,\leq\, n - \deg G + 2g - 1. \\[1ex]
A \in \Gamma(\Cc / \Cc_1) &~\Rightarrow~ \deg A \,\geq\, \gamma_P(G-K-P) \,\geq\, \deg G - 2g +1. \\
A \in \Delta(\Cc / \Cc_1) &~\Rightarrow~ \deg A \,\leq\, n - \gamma_P(D-G) \,\leq\, \deg G. 
\end{align*} 
\end{theorem}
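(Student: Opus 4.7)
The plan is to reduce each implication to membership in the semigroup ideal $\Gamma_P(C)$ for an appropriately chosen divisor $C$, and then to invoke the elementary inequality $\gamma_P(C) \geq \max\{0,\deg C\}$ stated just before the theorem, together with the dualities $\Gamma \leftrightarrow \Delta$ recorded after Proposition \ref{P:L}.

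For the first line, let $A \in \Gamma(\Cd_1/\Cd)$. Proposition \ref{P:O} says $0 \leq A \leq D$ and $L(G-D+A) \neq L(G-D+A-P)$. Setting $C = D-G$, this is exactly $L(A-C) \neq L(A-C-P)$. To conclude $A \in \Gamma_P(C)$ I still need the companion condition $L(A) \neq L(A-P)$; this holds for free because $A \geq 0$ is supported on $D$ while $P$ is disjoint from $D$, so the constant function $1$ lies in $L(A)$ but not in $L(A-P)$. Hence $\deg A \geq \gamma_P(D-G)$, and $\gamma_P(D-G) \geq \deg(D-G) = n - \deg G$ is immediate from the elementary lower bound.

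For the third line, Proposition \ref{P:L} rewrites $A \in \Gamma(\Cc/\Cc_1)$ as $0 \leq A \leq D$ together with $\Omega(G-A-P) \neq \Omega(G-A)$. Using the standard identification $\Omega(E) \simeq L(K-E)$, this becomes $L(K-G+A+P) \neq L(K-G+A)$, and with $C = G-K-P$ it is exactly $L(A-C) \neq L(A-C-P)$. The same argument as above supplies $L(A) \neq L(A-P)$, so $A \in \Gamma_P(G-K-P)$ and $\deg A \geq \gamma_P(G-K-P) \geq \deg(G-K-P) = \deg G - 2g + 1$.

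The two $\Delta$ inequalities then follow by duality. The equivalences $A \in \Delta(\Cd_1/\Cd) \Leftrightarrow D-A \in \Gamma(\Cc/\Cc_1)$ and $A \in \Delta(\Cc/\Cc_1) \Leftrightarrow D-A \in \Gamma(\Cd_1/\Cd)$ reduce each case to a bound on $\deg(D-A)$ already proved, and subtracting from $n$ produces the remaining two lines. The proof is essentially a clean translation of Propositions \ref{P:O} and \ref{P:L} into the language of $\Gamma_P(C)$, so I do not anticipate any substantive obstacle; the only point that wants care is keeping the canonical class $K$ straight in the $\Omega$-to-$L$ dictionary and noticing that the auxiliary condition $L(A) \neq L(A-P)$ is automatic from the geometric hypotheses on $A$, $D$, and $P$.
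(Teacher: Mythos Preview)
Your argument is correct and is exactly the reasoning the paper has in mind: the paper states the theorem without a written proof, relying on Propositions \ref{P:O}, \ref{P:L}, and \ref{P:cb}, the definition of $\Gamma_P(C)$, and the dualities recorded after Proposition \ref{P:L}, with the one observation (which you make explicit and the paper notes in the paragraph immediately following the theorem) that $0 \leq A \leq D$ with $P$ disjoint from $D$ forces $L(A) \neq L(A-P)$.
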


The lower bounds for $\deg A$ that are obtained with $\gamma_P(D-G)$ and $\gamma_P(G-K-P)$ use the assumption
$L(A) \neq L(A-P)$ instead of the stronger assumption $0 \leq A \leq D.$ Thus, when the bound for
$\deg A$ is not attained by divisors $A$ of the form $0 \leq A \leq D$, the bounds will not be optimal.
Essentially, we separate the problem of finding a small $A \in \Gamma(\Cd_1/\Cd)$ into two parts: 
a geometric part that considers all effective divisors $A$ not containing $P$, and an arithmetic part that verifies if $A$
can be represented by a divisor with $0 \leq A \leq D$. Only the first part is considered in this paper.
In other words, the bounds that we obtain apply to a different and more general problem, that of recovering 
local data at a point $P$ from given local data at a divisor $A$, for any divisor $A$ with no 
base point at $P$. We briefly outline this setting. 

\begin{definition} 
Let $X/\ff$ be a curve, and let $C$ be a divisor on $X$. For a given point $P$ on $X$ define
the collection $\Sigma_P(C) = \{ \pi_A : A \geq 0 \}$ of surjective maps
\[
\pi_A : \Omega(-C-P) \longrightarrow \Omega(-C-P)/\Omega(A-C-P), \quad \text{$A \geq 0.$}
\]
\end{definition}

The map $\pi_A$ assigns to a differential $\omega \in \Omega(-C-P)$ the local information
$\omega$ modulo $\Omega(A-C-P)$, in short the local information of $\omega$ at $A$.
Given that $\omega \in \Omega(-C-P)$, any sufficiently large amount
of local information determines $\omega$ uniquely. Indeed, for any divisor $A$ of 
sufficiently large degree, $\Omega(A-C-P) = 0$ and $\pi_A$ is a bijection. For a divisor $A$ 
with the weaker property $\Omega(A-C-P) = \Omega(A-C),$
the maps $\pi_A = \pi_{A+P}$ agree.  In that case, the local information of $\omega$ at $A$ 
determines uniquely the local information of $\omega$ at $A+P$. If $P$ occurs in the 
support of $A$ then this means that the local information can be determined with increased
precision. 
For secret sharing we assume that the secret corresponds to a fixed map $\pi_P$. 
Then the parties that do not know $\pi_P$ a priori are
those with $L(A) \neq L(A-P).$ Among those, the parties that can determine $\pi_P$ 
from $\pi_A$ are those that satisfy $\Omega(A-C-P) = \Omega(A-C)$, or, equivalently,
$L(A-C) \neq L(A-C-P).$ Together the conditions define the set $\Gamma_P(C).$
In this setting, the access structure $\Gamma_P(C)$ can be analysed without further
assumptions on the representation of the maps $\pi_A$. The image under $\pi_A$ of
a differential $\omega \in \Omega(-C-P)$ might 
be written out explicitly in terms of local parameters and residues, much like an
algebraic geometric code, or it might simply be given as a differential $\omega+\eta$
for $\eta \in \Omega(A-C-P).$ 

\section{Semigroup ideals} \label{S:SemigroupIdeals}

Let $X/\ff$ be a curve over a field $\ff$ and let $\Pic(X)$ be the group of divisor classes. 
Let $\Gamma = \{ A : L(A) \neq 0 \}$ be the semigroup of effective divisor classes.
For a given point $P \in X$, let $\Gamma_P = \{ A : L(A) \neq L(A-P) \}$ be the semigroup of effective divisor
classes with no base point at $P$. Call $A \in \Gamma_P$ a $P$-denominator for the divisor class $C \in \Pic(X)$ if 
$A-C \in \Gamma_P.$ So that $A-(A-C)$ expresses $C$ as the difference of two effective divisor 
classes without base point at $P$. The $P$-denominators for $C$ form the $\Gamma_P$-ideal
\[
\Gamma_P(C) = \{ A \in \Gamma_P : A-C \in \Gamma_P \}.
\]
The ideal structure of the semigroup $\Gamma_P(C)$ amounts to the property 
$A+E \in \Gamma_P(C)$ whenever $A \in \Gamma_P(C)$ and $E \in \Gamma_P.$
The $\Gamma_P$-ideal of $P$-numerators for $C$ is the ideal
\[
\Gamma_P(-C) = \{ A \in \Gamma_P : A+C \in \Gamma_P \}.
\]
Clearly, $A$ is a $P$-denominator for $C$ if and only if $A-C$ is a $P-$numerator for $C$, that is 
\[
A \in \Gamma_P(C) ~\Leftrightarrow~ A-C \in \Gamma_P(-C),
\]
The minimal degree $\gamma_P(C)$ of a $P$-denominator for $C$ is defined as
\[
\gamma_P(C) = \min \{ \deg A : A \in \Gamma_P(C) \}.
\]
The minimal degrees satisfy
\[ 
\gamma_P(C) - \gamma_P(-C) = \deg C.
\]
The denominator and numerator terminology is borrowed from the ideal interpretation of divisors.
Let $O$ be the ring of rational functions in $\ff(X)$ that are regular outside $P$.
For effective divisors $A$ and $B$ disjoint from $P$, the fractional $O$-ideal  
$\cup_{i \geq 0} L(i P-(B-A)) = J I^{-1}$ is the quotient of the integral $O$-ideals 
$J = \cup_{i \geq 0} L(i P - B)$ and $I = \cup_{i \geq 0} L(i P-A).$
To a denominator $A$ of smallest degree corresponds an ideal $I$ of smallest norm. \\

If either $C \in \Gamma_P$ or $-C \in \Gamma_P$ then the conditions $A \in \Gamma_P$
and $A-C \in \Gamma_P$ are dependent.

\begin{proposition} \label{P:gammap}
For a divisor $C$ on a curve $X$ of genus $g$,
$\gamma_P(C) \geq \max \{ 0, \deg C\}.$ 
Moreover, 
\begin{align*}
\gamma_P(C) = 0 ~\Leftrightarrow~ -C \in \Gamma_P ~&\Leftrightarrow~ \Gamma_P(C)  = \Gamma_P. \\
\gamma_P(C) = \deg C ~\Leftrightarrow~ C \in \Gamma_P ~&\Leftrightarrow~ \Gamma_P(-C)  = \Gamma_P.
\end{align*}
The inequality is strict if and only if $C, -C \not \in \Gamma_P$ only if $|\deg C| < 2g.$ 
\end{proposition}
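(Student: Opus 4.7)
The plan is to bootstrap the proposition from the identity $\gamma_P(C) - \gamma_P(-C) = \deg C$ that was just established, together with the semigroup structure of $\Gamma_P$ and a single Riemann--Roch input at the end. First I would observe that any $A \in \Gamma_P$ is an effective divisor class, so $\deg A \geq 0$ and hence $\gamma_P(C) \geq 0$; combining this with the symmetric statement for $-C$ through the stated identity gives $\gamma_P(C) \geq \deg C$, and so $\gamma_P(C) \geq \max\{0,\deg C\}$.

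Next I would verify two small structural facts. First, $0 \in \Gamma_P$, because $L(0) \supseteq \ff$ while $L(-P) = 0$ (any $f$ with $(f) \geq P$ would have $\deg(f) > 0$). Second, $\Gamma_P$ is closed under addition: if $f \in L(A) \setminus L(A-P)$ and $g \in L(B) \setminus L(B-P)$, then $v_P(f) = -v_P(A)$ and $v_P(g) = -v_P(B)$ exactly, so $v_P(fg) = -v_P(A+B)$, while for $Q \neq P$ the sum $(fg)+(A+B) = ((f)+A)+((g)+B) \geq 0$. Hence $fg \in L(A+B) \setminus L(A+B-P)$.

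With these in hand the equivalences line up quickly. If $-C \in \Gamma_P$, then for any $A \in \Gamma_P$ the sum $A + (-C) \in \Gamma_P$, so $A \in \Gamma_P(C)$ and $\Gamma_P(C) = \Gamma_P$; conversely $\Gamma_P(C) = \Gamma_P$ together with $0 \in \Gamma_P$ gives $0 \in \Gamma_P(C)$ and so $-C \in \Gamma_P$. If $\gamma_P(C) = 0$, a minimizing $A$ is an effective divisor class of degree $0$, so $A \sim 0$ and therefore $-C \sim A-C \in \Gamma_P$; conversely $-C \in \Gamma_P$ places $0$ in $\Gamma_P(C)$. The second line of equivalences is obtained from the first by replacing $C$ with $-C$ and using $A \in \Gamma_P(C) \Leftrightarrow A-C \in \Gamma_P(-C)$ together with $\gamma_P(C) = \gamma_P(-C)+\deg C$.

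Finally, equality $\gamma_P(C) = \max\{0,\deg C\}$ amounts to $\gamma_P(C) = 0$ or $\gamma_P(-C) = 0$, which by the equivalences above is $-C \in \Gamma_P$ or $C \in \Gamma_P$; taking the contrapositive yields the ``strict iff both $C,-C \not\in \Gamma_P$'' claim. For the last assertion I would invoke Riemann--Roch: if $\deg C \geq 2g$, then $\deg(C-P) \geq 2g-1$, so $l(C)-l(C-P) = 1$ and $C \in \Gamma_P$; symmetrically $\deg C \leq -2g$ forces $-C \in \Gamma_P$, so strict inequality requires $|\deg C| < 2g$. The main obstacle is really just the local valuation check that $\Gamma_P$ is a semigroup under addition; once that is in place the rest of the argument is formal bookkeeping plus one application of Riemann--Roch.
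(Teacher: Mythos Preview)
Your argument is correct. The paper does not actually include a proof of this proposition; it is stated immediately after the identity $\gamma_P(C)-\gamma_P(-C)=\deg C$ and the remark that ``if either $C\in\Gamma_P$ or $-C\in\Gamma_P$ then the conditions $A\in\Gamma_P$ and $A-C\in\Gamma_P$ are dependent,'' with the understanding that the proposition follows from these together with the semigroup property of $\Gamma_P$ and Riemann--Roch. Your write-up supplies exactly those details, so it is faithful to the paper's implicit reasoning rather than a genuinely different approach.

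One small remark: your verification that $\Gamma_P$ is closed under addition is phrased in terms of a fixed representative divisor, which is fine since the condition $L(A)\neq L(A-P)$ is invariant under linear equivalence; but the paper simply takes this semigroup structure as given in its definition of $\Gamma_P$, so you are really proving a fact the paper assumed.
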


For suitable choices of the divisor $C$, the parameter $\gamma_P(C)$ gives a lower bound for the coset 
distance of an algebraic geometric code (Proposition \ref{P:cb}) and therefore bounds for the access 
structure of an algebraic geometric linear secret sharing scheme (Theorem \ref{T:gammacoset}). 
Proposition \ref{P:gammap} shows that we can expect improvements over the trivial lower bound $\gamma_P(C) \geq \deg C$ 
that is used for Theorem \ref{T:gammacoset} only if $P$ is a base point for the divisor $C$. \\

Let $S$ be a finite set of rational points that includes $P$. For $\Gamma_S = \cap_{P \in S} \Gamma_P,$ let
$\Gamma_P(C;S) = \Gamma_P(C) \cap \Gamma_S = \{ A \in \Gamma_S : A-C \in \Gamma_P \}$, and let
$\gamma_P(C;S)$ be the minimal degree for a divisor $A \in \Gamma_P(C;S).$ 

\begin{lemma} \label{L:cosetgammas}
For a given set of rational points $S$ that includes $P$, 
and for extensions of algebraic geometric codes $C_\Omega(D,G-P) / C_\Omega(D,G)$ 
and $C_L(D,G) / C_L(D,G-P)$ defined with a divisor $D=P_1+\cdots+P_n$ disjoint from $S$, 
\begin{align*}
d(C_L(D,G) / C_L(D,G-P)) &\geq \gamma_P(C;S), \quad \text{for $C = D-G,$} \\ 
d(C_\Omega(D,G-P) / C_\Omega(D,G)) &\geq \gamma_P(C;S), \quad \text{for $C = G-K-P.$} 
\end{align*}
\end{lemma}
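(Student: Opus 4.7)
The plan is to combine Proposition \ref{P:cb} with the definition of $\gamma_P(C;S)$ by exhibiting every minimizing divisor from Proposition \ref{P:cb} as an element of $\Gamma_P(C;S)$. Since the former is the minimum of a subset of the set defining the latter, the inequality of minima follows.

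Concretely, Proposition \ref{P:cb} expresses each coset distance as
\[
\min \{ \deg A : 0 \leq A \leq D, \; L(A-C) \neq L(A-C-P) \},
\]
with $C = D-G$ in the first case and $C = G-K-P$ in the second. My first step is to observe that the condition $L(A-C) \neq L(A-C-P)$ is, by definition, exactly the statement $A - C \in \Gamma_P$. Thus it suffices to show that every divisor $A$ with $0 \leq A \leq D$ automatically lies in $\Gamma_S$, that is, $L(A) \neq L(A-Q)$ for every $Q \in S$.

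For this, I use that $D$ is disjoint from $S$ and $0 \leq A \leq D$, so $A$ has no support at any point $Q \in S$. The constant function $1$ lies in $L(A)$ because $A$ is effective, but does not lie in $L(A-Q)$ because $(1) + A - Q = A - Q$ fails to be effective at $Q$. Hence $L(A) \neq L(A-Q)$, giving $A \in \Gamma_Q$ for every $Q \in S$, so $A \in \Gamma_S$. Combined with $A - C \in \Gamma_P$, this places $A$ in $\Gamma_P(C;S)$, and therefore $\deg A \geq \gamma_P(C;S)$. Taking the minimum over all admissible $A$ yields
\[
d(C_L(D,G)/C_L(D,G-P)) \geq \gamma_P(D-G;S)
\]
and, in the dual case,
\[
d(C_\Omega(D,G-P)/C_\Omega(D,G)) \geq \gamma_P(G-K-P;S).
\]

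There is no real obstacle here; the only point that needs any care is the automatic membership $A \in \Gamma_S$, which relies crucially on the disjointness hypothesis $D \cap S = \emptyset$ (otherwise an effective divisor containing a point $Q \in S$ could still have $Q$ as a base point). The argument highlights that enlarging $S$ beyond $P$ is harmless for the bound because $A$ never has support at $S$, yet may refine $\Gamma_P(C;S)$ and thereby sharpen $\gamma_P(C;S)$ when used in the subsequent analysis of the semigroup ideal.
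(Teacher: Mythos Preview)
Your argument is correct and is precisely the unpacking the paper intends: the paper's own proof consists of the single citation ``Proposition~\ref{P:cb}'', and you have supplied exactly the missing verification that any divisor $A$ with $0 \leq A \leq D$ and $D$ disjoint from $S$ automatically lies in $\Gamma_S$ (via the constant function $1$), so the minimizing set in Proposition~\ref{P:cb} sits inside $\Gamma_P(C;S)$.
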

\begin{proof}
Proposition \ref{P:cb}.
\end{proof}

To obtain similar estimates for the minimum distance of an algebraic geometric code, we
use Proposition \ref{P:agd}. Define the $\Gamma_S$-ideals $\Gamma^\ast(C;S) \subseteq \Gamma(C;S)$, 
\begin{align*}
\Gamma^\ast(C;S) &= \{ A \in \Gamma_S : L(A-C) \neq L(-C) \}, \\
\Gamma(C;S) &= \{ A \in \Gamma_S : L(A-C) \neq 0 \}.
\end{align*}
Let $\gamma^\ast(C;S)$ (resp. $\gamma(C;S)$) denote the minimal degree for a divisor $A \in \Gamma^\ast(C;S)$ (resp. $A \in \Gamma(C,S)$).

\begin{lemma} \label{L:codegammas}
For a given set of rational points $S$, and for algebraic geometric codes $C_L(D,G)$ and $C_\Omega(D,G)$ defined
with a divisor $D=P_1+\cdots+P_n$ disjoint from $S$, 
\begin{align*}   
&d(C_L(D,G)) \geq \gamma^\ast(C;S) \geq \gamma(C;S), \quad \text{for $C=D-G$}, \\
&d(C_\Omega(D,G)) \geq \gamma^\ast(C;S) \geq \gamma(C;S), \quad \text{for $C=G-K$}. 
\end{align*}
For $L(-C) = 0$, $\gamma^\ast(C;S) = \gamma(C;S)$.
\end{lemma}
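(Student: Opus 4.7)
The plan is to deduce both inequalities by reducing to Proposition \ref{P:agd}, which expresses $d(C_L(D,G))$ (resp.\ $d(C_\Omega(D,G))$) as the minimum of $\deg A$ over divisors with $0 \leq A \leq D$ and $L(A-C) \neq L(-C)$, for $C = D-G$ (resp.\ $C = G-K$). It is therefore enough to check that every such minimizing $A$ already lies in $\Gamma^\ast(C;S)$, and then that $\Gamma^\ast(C;S) \subseteq \Gamma(C;S)$ with equality under the hypothesis $L(-C) = 0$.

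First I would verify that $A \in \Gamma_S$. Since $A \geq 0$ and $A \leq D$ with $D$ disjoint from $S$, the support of $A$ avoids every $P \in S$. For such a $P$, the constant function $1$ lies in $L(A)$ but not in $L(A-P)$, because $A - P$ is not effective when $P$ is not in the support of $A$. Hence $L(A) \neq L(A-P)$ for each $P \in S$, so $A \in \Gamma_P$ for every $P \in S$ and $A \in \Gamma_S = \bigcap_{P\in S} \Gamma_P$. Combined with the hypothesis $L(A-C) \neq L(-C)$, this places $A$ in $\Gamma^\ast(C;S)$, and therefore $\deg A \geq \gamma^\ast(C;S)$, giving the first inequality in each line.

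For the second inequality $\gamma^\ast(C;S) \geq \gamma(C;S)$, observe that any $A \in \Gamma^\ast(C;S)$ is effective, so $L(-C) \subseteq L(A-C)$; the strict inequality $L(A-C) \neq L(-C)$ then forces $L(A-C) \neq 0$, i.e.\ $A \in \Gamma(C;S)$. If in addition $L(-C) = 0$, then $L(A-C) \neq L(-C)$ and $L(A-C) \neq 0$ are the same condition, so $\Gamma^\ast(C;S) = \Gamma(C;S)$ and consequently $\gamma^\ast(C;S) = \gamma(C;S)$. The argument is essentially bookkeeping on top of Proposition \ref{P:agd}, so no real obstacle is anticipated; the only point worth isolating is the elementary remark that an effective divisor whose support avoids $P$ has no base point at $P$.
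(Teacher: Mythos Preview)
Your argument is correct and is exactly what the paper's one-line proof (``Proposition~\ref{P:agd}'') asks the reader to unpack. The only slip is the assertion that every $A\in\Gamma^\ast(C;S)$ is itself effective---membership in $\Gamma_S$ only guarantees $L(A)\neq 0$---but this is harmless: when $L(-C)\neq 0$ the class $A-C$ is a sum of two effective classes, so $L(A-C)\neq 0$ and the inclusion $\Gamma^\ast(C;S)\subseteq\Gamma(C;S)$ (already stated before the lemma) holds regardless.
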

\begin{proof}
Proposition \ref{P:agd}.
\end{proof}

The condition $L(-C) = 0$ holds in all cases where the Goppa lower bound $d \geq \deg C$ (Theorem \ref{T:Goppa}) is positive.
We give lower bounds for $\gamma(C;S)$ using lower bounds for $\gamma_P(C;S)$. With a minor
modification, we obtain lower bounds for $\gamma^\ast(C;S).$ 

\begin{lemma} \label{L:gammas}
Let $S$ be a finite set of rational points. For a divisor $C$, and for a point $P \in S$,
\begin{align*}
&\Gamma(C;S) = \Gamma_P(C;S) \cup \Gamma(C+P;S). \\
&\Gamma^\ast(C;S) \subseteq \Gamma_P(C;S) \cup \Gamma^\ast(C+P;S). 
\end{align*}
Moreover, for $-C \in \Gamma_P$, 
\[
\Gamma^\ast(C;S) \subseteq \Gamma^\ast(C+P;S). 
\]
\end{lemma}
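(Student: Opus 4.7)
\medskip

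\textbf{Proof plan.} The plan is to handle the three assertions in turn, using in each case only the definitions of $\Gamma_S$, $\Gamma(C;S)$, $\Gamma^\ast(C;S)$, $\Gamma_P(C;S)$ together with the elementary fact that if $A\in\Gamma_S$ then (by $\Gamma_S\subseteq\Gamma_P\subseteq\Gamma$) the class $A$ has an effective representative, so $L(-C)\subseteq L(A-C)$ and $L(-C-P)\subseteq L(A-C-P)$, and the fact that $\dim L(A-C)-\dim L(A-C-P)\in\{0,1\}$.

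For the first equality, I would argue both inclusions directly. The inclusion $\supseteq$ is trivial: $A\in\Gamma_P(C;S)$ forces $L(A-C)\supsetneq L(A-C-P)$ hence $L(A-C)\neq 0$, and $A\in\Gamma(C+P;S)$ gives $L(A-C)\supseteq L(A-C-P)\neq 0$. For $\subseteq$, start with $A\in\Gamma(C;S)$ and split on whether $L(A-C-P)=0$: if it is zero then $L(A-C)\neq L(A-C-P)$ and $A\in\Gamma_P(C;S)$; otherwise $A\in\Gamma(C+P;S)$.

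For the second inclusion, I would take $A\in\Gamma^\ast(C;S)$, assume $A\notin\Gamma_P(C;S)$, and show $A\in\Gamma^\ast(C+P;S)$. The assumption means $L(A-C)=L(A-C-P)$. Arguing by contradiction, suppose $L(A-C-P)=L(-C-P)$. Then the chain
\[
L(-C-P)\subseteq L(-C)\subseteq L(A-C)=L(A-C-P)=L(-C-P)
\]
(using effectivity of $A$) collapses, forcing $L(A-C)=L(-C)$, which contradicts $A\in\Gamma^\ast(C;S)$. Hence $L(A-C-P)\neq L(-C-P)$ and $A\in\Gamma^\ast(C+P;S)$.

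For the strengthened third inclusion, the hypothesis $-C\in\Gamma_P$ gives $\dim L(-C)=\dim L(-C-P)+1$. Take $A\in\Gamma^\ast(C;S)$; by effectivity $L(-C)\subsetneq L(A-C)$, so $\dim L(A-C)>\dim L(-C)=\dim L(-C-P)+1$. Combining with $\dim L(A-C-P)\geq\dim L(A-C)-1$ yields $\dim L(A-C-P)>\dim L(-C-P)$, so $L(A-C-P)\neq L(-C-P)$, i.e.\ $A\in\Gamma^\ast(C+P;S)$. I expect the only subtle point to be keeping track of the two potentially coinciding inclusions $L(-C-P)\subseteq L(-C)$ and $L(-C-P)\subseteq L(A-C-P)$; the hypothesis $-C\in\Gamma_P$ in part three is precisely what prevents the first inclusion from being an equality, and this is what converts the weaker second statement into the stronger third one.
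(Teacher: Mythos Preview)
Your proof is correct and follows essentially the same route as the paper's. The paper argues each of the three claims in one line: the equality via ``$L(A-C)\neq 0$ iff $L(A-C)\neq L(A-C-P)$ or $L(A-C-P)\neq 0$'', the inclusion via the contrapositive ``$L(A-C)=L(A-C-P)=L(-C-P)$ forces $L(A-C)=L(-C)$'', and the third claim via $\dim L(A-C)/L(-C-P)>1$; your version spells out the same dimension counts in more detail and makes explicit the use of effectivity of $A\in\Gamma_S$ to obtain the inclusions $L(-C)\subseteq L(A-C)$ and $L(-C-P)\subseteq L(A-C-P)$, which the paper leaves implicit.
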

\begin{proof}
For the equality, $L(A-C) \neq 0$ if and only if $L(A-C) \neq L(A-C-P)$ or $L(A-C-P) \neq 0.$
For the inclusion, $L(A-C) \neq L(-C)$ only if $L(A-C) \neq L(A-C-P)$ or $L(A-C-P) \neq L(-C-P).$
Finally, for $A \in \Gamma^\ast(C;S)$ such that $-C \in \Gamma_P$, we have $\dim L(A-C) / L(-C-P) > 1$, and thus $L(A-C-P) \neq L(-C-P).$
So that $A \in \Gamma^\ast(C+P;S).$
\end{proof} 

\begin{proposition} \label{P:gammas}
\begin{align*}
\gamma(C;S) &\geq \min \{  \gamma_P(C;S), \gamma(C+P;S) \}. \\
\gamma^\ast(C;S) &\geq \min \{  \gamma_P(C;S), \gamma^\ast(C+P;S) \} \backslash \{ 0 \}. 
\end{align*}
\end{proposition}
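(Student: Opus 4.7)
The plan is to deduce Proposition \ref{P:gammas} as an immediate consequence of Lemma \ref{L:gammas} by passing to minimal degrees in the set relations. For the first inequality, I would take any divisor $A \in \Gamma(C;S)$ of minimal degree $\gamma(C;S)$. Since the lemma provides the equality $\Gamma(C;S) = \Gamma_P(C;S) \cup \Gamma(C+P;S)$, this $A$ must lie in at least one of the two sets on the right-hand side, so $\deg A \geq \gamma_P(C;S)$ or $\deg A \geq \gamma(C+P;S)$. Either way $\deg A \geq \min\{\gamma_P(C;S), \gamma(C+P;S)\}$, as desired.

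For the basic version of the second inequality (ignoring the $\setminus \{0\}$), the same argument applied to the inclusion $\Gamma^\ast(C;S) \subseteq \Gamma_P(C;S) \cup \Gamma^\ast(C+P;S)$ from Lemma \ref{L:gammas} yields $\gamma^\ast(C;S) \geq \min\{\gamma_P(C;S), \gamma^\ast(C+P;S)\}$. To justify removing $0$ from the minimum, I would first observe that the zero divisor never lies in $\Gamma^\ast$, since $L(-C) = L(-C)$ trivially; hence both $\gamma^\ast(C;S)$ and $\gamma^\ast(C+P;S)$ are at least $1$ whenever the respective sets are nonempty. The only way a $0$ could appear in the minimum is therefore via $\gamma_P(C;S) = 0$, which holds precisely when the zero divisor lies in $\Gamma_P(C;S)$, equivalently when $-C \in \Gamma_P$.

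The main (and only mildly subtle) step is the case $\gamma_P(C;S) = 0$. Under the hypothesis $-C \in \Gamma_P$, the third statement of Lemma \ref{L:gammas} strengthens the inclusion to $\Gamma^\ast(C;S) \subseteq \Gamma^\ast(C+P;S)$. Passing to minimal degrees gives $\gamma^\ast(C;S) \geq \gamma^\ast(C+P;S)$, which is exactly the minimum of $\{\gamma_P(C;S), \gamma^\ast(C+P;S)\}$ after the $0$ entry is discarded, completing the proof. There is really no serious obstacle here: the proposition is a clean corollary of the preceding lemma, and the only point requiring care is the interplay between the first two parts of Lemma \ref{L:gammas} (which give the base bound) and the third part (which handles the degenerate case $-C \in \Gamma_P$ and justifies the $\setminus \{0\}$ refinement).
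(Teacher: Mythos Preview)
Your proposal is correct and follows essentially the same approach as the paper: both derive the inequalities directly from the set relations in Lemma \ref{L:gammas}, note that $\gamma^\ast$ is always positive, identify $\gamma_P(C;S)=0$ with $-C\in\Gamma_P$, and then invoke the third clause of Lemma \ref{L:gammas} to dispose of that degenerate case. Your write-up is simply more explicit than the paper's terse two-line proof.
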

\begin{proof}
In general $\gamma^\ast(C;S) > 0.$ And $\gamma_P(C;S) = 0$ only if $\gamma_P(C) = 0$ if and only if $-C \in \Gamma_P$,
in which case we can omit $\gamma_P(C;S)$ before taking the minimum.
\end{proof}

\section{Main theorem} \label{S:MainTheorem}

For a given curve $X / \ff$, let $C \in \Pic(X)$ be a divisor class and let $P$ be a point on $X$.
For the semigroup $\Gamma_P = \{ A : L(A) \neq L(A-P) \}$ and the
$\Gamma_P$-ideal 
\[
\Gamma_P(C) = \{ A \in \Gamma_P : A-C \in \Gamma_P \}, 
\]
define the complement 
\[
\Delta_P(C) = \{ A \in \Gamma_P : A-C \not \in \Gamma_P \}.
\]

\begin{lemma} \label{L:Dempty}
\[
\Delta_P(C) = \emptyset ~\Leftrightarrow~ \Gamma_P(C) = \Gamma_P ~\Leftrightarrow~ -C \in \Gamma_P. \\
\]
\end{lemma}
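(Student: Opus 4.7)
The plan is to exploit the fact that $\Gamma_P$ partitions as the disjoint union $\Gamma_P = \Gamma_P(C) \sqcup \Delta_P(C)$, which is immediate from the definitions given just above the lemma. From this partition, the first equivalence $\Delta_P(C) = \emptyset \Leftrightarrow \Gamma_P(C) = \Gamma_P$ is a one-line set-theoretic observation, so no real work is needed here.

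For the second equivalence $\Gamma_P(C) = \Gamma_P \Leftrightarrow -C \in \Gamma_P$, I would first note that this has essentially already been recorded in Proposition \ref{P:gammap}, which states that $\gamma_P(C) = 0 \Leftrightarrow -C \in \Gamma_P \Leftrightarrow \Gamma_P(C) = \Gamma_P$. So one option is simply to cite it. If a self-contained argument is preferred, I would proceed in two directions. For $(\Leftarrow)$, I would use the semigroup property of $\Gamma_P$: if $f \in L(A) \setminus L(A-P)$ witnesses $A \in \Gamma_P$ and $g \in L(-C) \setminus L(-C-P)$ witnesses $-C \in \Gamma_P$, then $fg \in L(A-C) \setminus L(A-C-P)$, so $A - C \in \Gamma_P$, hence $A \in \Gamma_P(C)$, giving $\Gamma_P \subseteq \Gamma_P(C)$ (the reverse inclusion is by definition).

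For $(\Rightarrow)$, the key observation is that $0 \in \Gamma_P$, since $L(0)$ contains the constants while $L(-P) = 0$. Assuming $\Gamma_P(C) = \Gamma_P$, this forces $0 \in \Gamma_P(C)$, so by definition $0 - C = -C \in \Gamma_P$, as desired.

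There is no real obstacle to overcome here; the statement is a direct unpacking of the definition of $\Delta_P(C)$ together with the semigroup structure of $\Gamma_P$. The only small point to get right is to remember that $0 \in \Gamma_P$ supplies the specific witness needed in the $(\Rightarrow)$ direction.
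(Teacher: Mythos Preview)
Your proposal is correct. The paper states this lemma without proof, treating it as immediate from the definitions of $\Gamma_P(C)$ and $\Delta_P(C)$ (and the equivalence with $-C \in \Gamma_P$ is already recorded in Proposition~\ref{P:gammap}); your unpacking via the disjoint decomposition $\Gamma_P = \Gamma_P(C) \sqcup \Delta_P(C)$, the semigroup property of $\Gamma_P$, and the observation $0 \in \Gamma_P$ is exactly the intended elementary argument.
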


Let $X$ be of genus $g$ and let $K$ represent the canonical divisor class. 

\begin{lemma} \label{L:Ddegree}
In general,
\[
A \in \Delta_P(C) ~\Leftrightarrow~ K+C+P-A \in \Delta_P(C).
\]
For $A \in \Delta_P(C)$, 
\[
\min \{ 0, \deg C \} \;\leq\; \deg A \;\leq\; \max \{ 2g-1, \deg C + 2g-1 \}.
\]
\end{lemma}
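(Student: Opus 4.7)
My plan is to base everything on one key Riemann--Roch identity and then read off the symmetry and the degree bounds as immediate consequences.

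First I would establish the duality $A \in \Gamma_P \Leftrightarrow K+P-A \notin \Gamma_P$. Applying Riemann--Roch to $l(A)$ and $l(A-P)$ gives
\[
l(A)-l(A-P) \;=\; 1 + l(K-A) - l(K+P-A),
\]
and the bracket $l(K+P-A) - l(K-A) \in \{0,1\}$ is $1$ exactly when $K+P-A \in \Gamma_P$. Hence $A \in \Gamma_P$ iff $K+P-A \notin \Gamma_P$. This is really the only nontrivial ingredient; everything below is formal.

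For the symmetry, let $A' = K+C+P-A$. Applying the duality to the pair $(A',A'-C)$:
\[
A' \in \Gamma_P \;\Longleftrightarrow\; K+P-A' = A-C \notin \Gamma_P, \qquad A'-C = K+P-A,
\]
so $A'-C \in \Gamma_P$ iff $A \notin \Gamma_P$. Therefore $A \in \Gamma_P$ and $A-C \notin \Gamma_P$ together are equivalent to $A' \in \Gamma_P$ and $A'-C \notin \Gamma_P$, which is exactly $A' \in \Delta_P(C)$. This gives the first assertion.

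For the degree bound, $A \in \Gamma_P$ forces $l(A) \geq 1$, so $A$ is linearly equivalent to an effective divisor and $\deg A \geq 0 \geq \min\{0, \deg C\}$. Applying the same observation to $A' = K+C+P-A \in \Gamma_P$ (which belongs to $\Gamma_P$ by the symmetry just proved) gives $\deg A' \geq 0$, i.e.\ $\deg A \leq 2g - 1 + \deg C \leq \max\{2g-1, \deg C + 2g-1\}$. The main (minor) obstacle is just keeping the Riemann--Roch bookkeeping straight in the first step; once the duality $A \in \Gamma_P \Leftrightarrow K+P-A \notin \Gamma_P$ is in hand, both parts of the lemma are one-liners.
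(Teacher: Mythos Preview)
Your proof is correct and follows the same approach indicated in the paper, which simply says ``This follows from the definition together with the Riemann--Roch theorem.'' You have spelled out exactly that: the Riemann--Roch identity $l(A)-l(A-P)=1+l(K-A)-l(K+P-A)$ gives the involution $A\in\Gamma_P \Leftrightarrow K+P-A\notin\Gamma_P$, from which both the symmetry and the degree bounds follow immediately. In fact you obtain the slightly sharper inequalities $0\le\deg A\le \deg C+2g-1$, of which the stated bounds are an obvious consequence.
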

\begin{proof}
This follows from the definition together with the Riemann-Roch theorem.
\end{proof}

The following is the analogue of Theorem \ref{T:cosetbound} in the language of divisors.

\begin{theorem} (Coset bound for divisors) \label{T:cbdiv}
Let $\{ A_1 \leq A_2 \leq \cdots \leq A_w \} \subset \Delta_P(C)$ be
a sequence of divisors with $A_{i+1} \geq A_i + P$, for $i=1,\ldots,w-1.$
Then $\deg A \geq w$, for every divisor $A \in \Gamma_P(C)$ with support disjoint
from $A_w-A_1,$ that is
\[
\gamma_P(C;A_w-A_1) \geq w.
\]
\end{theorem}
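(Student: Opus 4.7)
The plan is to argue by contradiction: suppose $A \in \Gamma_P(C)$ has support disjoint from $A_w - A_1$ and $\deg A \le w - 1$. Because $A \in \Gamma_P$ forces $L(A) \neq 0$, I would first replace $A$ by an effective representative in its divisor class whose support remains disjoint from $\operatorname{supp}(A_w - A_1)$; for $w \ge 2$ the chain condition $A_2 \ge A_1 + P$ forces $P \in \operatorname{supp}(A_w - A_1)$, so $v_P(A) = 0$ as well.

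The main step is to exhibit $w$ elements of $L(A + A_w - C)$ that are linearly independent modulo the subspace $L(A_w - C)$. For effective $A$ the inclusion $L(A_w - C) \hookrightarrow L(A + A_w - C)$ has cokernel of dimension at most $\deg A$, so such a construction forces $\deg A \ge w$, the desired contradiction. The witnesses are $f_i \in L(A_i) \setminus L(A_i - P)$ (from $A_i \in \Gamma_P$) and $h \in L(A - C) \setminus L(A - C - P)$ (from $A - C \in \Gamma_P$). The chain condition $A_{i+1} \ge A_i + P$ makes the pole orders $\operatorname{ord}_P(f_i) = -v_P(A_i)$ strictly decreasing in $i$, so for any nontrivial combination $f = \sum_{i \le j} c_i f_i$ with $c_j \neq 0$ one has $\operatorname{ord}_P(f) = -v_P(A_j)$. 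The proposed independent vectors are $h f_1, \ldots, h f_w \in L(A + A_w - C)$.

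To verify independence modulo $L(A_w - C)$, I would suppose $\phi = \sum c_i h f_i \in L(A_w - C)$ with largest nonzero index $j$. Setting $f = \sum c_i f_i \in L(A_j)$, multiplicativity gives $hf \in L(A + A_j - C)$, and combined with $hf = \phi \in L(A_w - C)$ this yields $hf \in L(A + A_j - C) \cap L(A_w - C) = L(\min(A + A_j,\, A_w) - C)$. The technical core of the proof, and the main obstacle, is the pointwise divisor identity $\min(A + A_j, A_w) = A_j$. I would verify it in two cases: at $Q \notin \operatorname{supp}(A_w - A_1)$ the divisors $A_j$ and $A_w$ agree at $Q$, and $v_Q(A) \ge 0$ gives the identity; at $Q \in \operatorname{supp}(A_w - A_1)$ the support hypothesis forces $v_Q(A) = 0$ and the identity reduces to $v_Q(A_j) \le v_Q(A_w)$. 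Granting the identity, $hf \in L(A_j - C)$, while the pole computation $\operatorname{ord}_P(hf) = v_P(C) - v_P(A_j) = -v_P(A_j - C)$ shows $hf \notin L(A_j - C - P)$. Thus $A_j - C \in \Gamma_P$, contradicting $A_j \in \Delta_P(C)$, and the asserted linear independence follows.
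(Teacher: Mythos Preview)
Your argument is correct and reaches the conclusion by a genuinely different route from the paper's proof.

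The paper pairs the functions $f_i \in L(A_i)\setminus L(A_i-P)$ with differentials $\eta_i \in \Omega(A_i-C-P)\setminus\Omega(A_i-C)$, moves the $A_i$ (by a single principal divisor) so that they avoid $\operatorname{supp}(A)$, and then uses a vanishing count: since $\deg A<w$, some nonzero $f$ in the span of the $f_i$ vanishes along $A$; if $f_j$ is the leading term, the product $f\eta_j$ lies in $\Omega(A-C-P)\setminus\Omega(A-C)$, forcing $A-C\notin\Gamma_P$. You instead keep a single witness $h\in L(A-C)\setminus L(A-C-P)$ from the hypothesis $A\in\Gamma_P(C)$, and argue by a cokernel bound on $L(A+A_w-C)/L(A_w-C)$ together with the pointwise identity $\min(A+A_j,A_w)=A_j$. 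Your contradiction lands on $A_j-C\in\Gamma_P$ (i.e., $A_j\notin\Delta_P(C)$), whereas the paper's lands on $A-C\notin\Gamma_P$. Your approach stays entirely in the language of function spaces and avoids differentials; the paper's version, on the other hand, is set up to mirror Theorem~\ref{T:cosetbound} directly and feeds straight into the decoding procedure of Appendix~\ref{S:CosetDecoding}, where the pairs $(f_i,\eta_{w+1-i})$ become the vectors $(a_i,b_{w+1-i})$.

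One small remark on your first reduction. Replacing $A$ by an effective representative whose support still misses $\operatorname{supp}(A_w-A_1)$ amounts to finding $g\in L(A)$ with $g(Q)\neq 0$ for every $Q$ in that support; over a small base field this is not automatic from $L(A)\neq 0$ alone. The paper sidesteps the issue by moving the $A_i$ instead of $A$, using that the $v_Q(A_i)$ are constant in $i$ for $Q\notin\operatorname{supp}(A_w-A_1)$, so a single function shifts the whole chain. Either way the point is minor and does not affect the applications (where $A$ arises as an effective subdivisor of $D$), but you may want to adopt the paper's replacement or note a base-change argument to make this step airtight.
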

\begin{proof}
After replacing the sequence with an equivalent sequence if necessary, 
we may assume that $A_1, A_2, \ldots, A_w$ are disjoint from $A$. 
We obtain two sequences of subspaces.  
\begin{multline*}
L(A_w) \supsetneq L(A_w - P) \supseteq L(A_{w-1}) \supsetneq L(A_{w-1}-P) \supseteq \cdots \\ 
\cdots \supseteq L(A_2) \supsetneq L(A_2-P) \supseteq L(A_1) \supsetneq L(A_1-P).
\end{multline*}
\begin{multline*}
\Omega(A_w-C) \subsetneq \Omega(A_w-C-P) \subseteq \Omega(A_{w-1}-C) \subsetneq \Omega(A_{w-1}-C-P) \subseteq \cdots \\
\cdots \subset \Omega(A_2-C) \subsetneq \Omega(A_2-C-P) \subseteq \Omega(A_1-C) \subsetneq \Omega(A_1-C-P).
\end{multline*}
For $i=1,2,\ldots,w,$ choose
\[
f_i \in L(A_i) \backslash L(A_i-P) ~~\text{and}~~ \eta_i \in \Omega(A_i-C-P) \backslash \Omega(A_i-C).
\]
Let $A \in \Gamma_P$ be of degree $\deg A < w.$ Then there exists a linear combination
$f$ of $f_1, f_2, \ldots, f_w$ that vanishes on $A$. If $f_i$ is the leading function in the 
linear combination then $f \in L(A_i - A) \backslash L(A_i-A-P)$ and 
$f \eta_i \in \Omega(-C-P+A) \backslash \Omega(-C+A)$. Thus $A-C \not \in \Gamma_P$
and $A \not \in \Gamma_P(C).$
\end{proof} 

For a divisor $B$, let
\begin{align*}
\Delta_P(B,C) &= \{ B+iP : i \in \ZZ \} \cap \Delta_P(C), \\
              &= \{ B+iP \in \Gamma_P, B-C+iP \not \in \Gamma_P \}.
\end{align*}

\begin{lemma} \label{L:DminD}
To the set $\Delta_P(B,C)$ corresponds a dual set 
\begin{align*}
\Delta_P(B-C,-C) &= \{ B-C+iP : i \in \ZZ \} \cap \Delta_P(-C), \\
              &= \{ B-C+iP \in \Gamma_P, B+iP \not \in \Gamma_P \},
\end{align*}
such that $\# \Delta_P(B,C) - \# \Delta_P(B-C,-C) = \deg C.$
Furthermore,
\[
\# \Delta_P(B,C) = \begin{cases} \deg C, &\quad \text{if $C \in \Gamma_P.$} \\
                                0, &\quad \text{if $-C \in \Gamma_P.$}
\end{cases}
\]
In particular,
\[
\# \Delta_P(B,C) = \begin{cases} \deg C, &\quad \text{if $\deg C \geq 2g.$} \\
                                0, &\quad \text{if $\deg C \leq -2g.$}
\end{cases}
\]
\end{lemma}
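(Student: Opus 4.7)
The plan is to reduce the counting to a telescoping sum of dimension differences. Set $\ell(i) = \dim L(B+iP)$ and $m(i) = \dim L(B-C+iP)$; by Riemann--Roch both are non-decreasing $\ZZ$-valued functions with consecutive differences in $\{0,1\}$, equal to $0$ for $i \ll 0$ and growing linearly for $i \gg 0$. The dual description of $\Delta_P(B-C,-C)$ is immediate from unpacking the definition of $\Delta_P(-C)$: a divisor $A$ lies in $\Delta_P(-C)$ iff $A \in \Gamma_P$ and $A+C \not\in \Gamma_P$, and specializing $A = B-C+iP$ gives the stated condition.

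Next I would translate the semigroup conditions into jump patterns of the pair $(\ell,m)$. The divisor $B+iP$ lies in $\Delta_P(B,C)$ precisely when $(\ell(i)-\ell(i-1),\,m(i)-m(i-1)) = (1,0)$, and $B-C+iP$ lies in $\Delta_P(B-C,-C)$ precisely when the pattern is $(0,1)$; the patterns $(0,0)$ and $(1,1)$ cancel in the signed difference. This gives
\[
\#\Delta_P(B,C) - \#\Delta_P(B-C,-C) \;=\; \sum_{i \in \ZZ}\bigl[(\ell(i)-\ell(i-1)) - (m(i)-m(i-1))\bigr].
\]
The sum telescopes: for $i \gg 0$ both divisors are non-special, so $\ell(i) - m(i) = \deg C$, while for $i \ll 0$ both $\ell(i)$ and $m(i)$ vanish. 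Summing against partial ranges $[-N,N]$ and sending $N \to \infty$ yields $\deg C$, establishing the first identity.

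For the closed-form values I would invoke Lemma \ref{L:Dempty}. If $-C \in \Gamma_P$, then $\Delta_P(C) = \emptyset$ and hence $\Delta_P(B,C) = \emptyset$. If $C \in \Gamma_P$, the same lemma applied with $C$ replaced by $-C$ gives $\Delta_P(-C) = \emptyset$, so $\Delta_P(B-C,-C) = \emptyset$, and the counting identity forces $\#\Delta_P(B,C) = \deg C$. For the final ``in particular'' clause, whenever $\deg C \geq 2g$ the divisor $C-P$ has degree at least $2g-1$ and is therefore non-special, so $\dim L(C) - \dim L(C-P) = 1$ and $C \in \Gamma_P$; symmetrically, $\deg C \leq -2g$ forces $-C \in \Gamma_P$.

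The step that needs the most care is the telescoping: one must justify exchanging the infinite sum over $i \in \ZZ$ with a limit of finite partial sums. The difficulty is only cosmetic, because $\ell(i)-\ell(i-1)$ and $m(i)-m(i-1)$ are each eventually $1$ and eventually $0$, and their difference has compact support once both sequences have entered the non-special range. So the essential content is the bookkeeping matching of jump patterns in the pair $(\ell,m)$ with membership in $\Delta_P(B,C)$ and $\Delta_P(B-C,-C)$, and once this dictionary is set up the rest is immediate from Riemann--Roch and Lemma \ref{L:Dempty}.
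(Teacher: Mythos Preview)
Your proof is correct and follows essentially the same approach as the paper: both express the signed difference of the two delta-set counts as a telescoping sum of $(l(B+iP)-l(B+(i-1)P)) - (l(B-C+iP)-l(B-C+(i-1)P))$ and evaluate it via Riemann--Roch in the non-special range, then invoke Lemma~\ref{L:Dempty} for the remaining clauses. The only cosmetic difference is that the paper truncates one-sidedly at a single large $i_0$ (implicitly using that both sets are empty for $i$ sufficiently negative), whereas you truncate symmetrically; your explicit Riemann--Roch check that $\deg C \geq 2g$ forces $C \in \Gamma_P$ is a detail the paper leaves to the reader.
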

\begin{proof}
For $i_0$ large enough, 
\begin{align*}
&\# \Delta_P(B,C) - \# \Delta_P(B-C,-C) \\
=~&\# \{ i \leq i_0 : B+iP \in \Gamma_P, B-C+iP \not \in \Gamma_P \} \\
 &\quad - \# \{ i \leq i_0 : B+iP \not \in \Gamma_P, B-C+iP \in \Gamma_P \} \\
=~&\sum_{i \leq i_0} (l(B+iP)-l(B+iP-P))-(l(B-C+iP)-l(B-C+iP-P)) \\
=~&\dim L(B+i_0 P) - \dim L(B-C+i_0 P) = \deg C.
\end{align*}
For the remainder use Lemma \ref{L:Dempty}.
\end{proof}

\begin{corollary} \label{C:cbfrac}
For any choice of divisor $B$, there is a pair of equivalent bounds
\begin{align*}
&\gamma_P(C) \geq \# \Delta_P(B,C).
&\gamma_P(-C) \geq \# \Delta_P(B-C,-C).
\end{align*}
\end{corollary}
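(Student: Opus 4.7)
The plan is to derive the first inequality as a direct application of Theorem \ref{T:cbdiv}, then use the two difference formulas already established (one for degrees of minimal denominators, one from Lemma \ref{L:DminD}) to show the two inequalities are equivalent.

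First I would enumerate $\Delta_P(B,C)$. By definition its elements form a subset of the chain $\{ B + iP : i \in \ZZ \}$, so they can be listed in strictly increasing order as
\[
A_1 = B + i_1 P \;<\; A_2 = B + i_2 P \;<\; \cdots \;<\; A_w = B + i_w P,
\]
with $i_1 < i_2 < \cdots < i_w$ and $w = \#\Delta_P(B,C)$. Since each $i_{j+1} - i_j \geq 1$, we have $A_{j+1} \geq A_j + P$, and every $A_j$ lies in $\Delta_P(C)$. This is exactly the hypothesis of Theorem \ref{T:cbdiv}, so applying it yields $\gamma_P(C) \geq w = \#\Delta_P(B,C)$. (If $w = 0$ the bound is trivial; if $w \geq 1$ Theorem \ref{T:cbdiv} applies verbatim.)

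Next I would obtain the second inequality by symmetry. Applying the same argument to the divisor $B' = B - C$ and the class $C' = -C$ in place of $B$ and $C$, the chain $\{ B - C + iP : i \in \ZZ \} \cap \Delta_P(-C)$ is precisely $\Delta_P(B-C,-C)$ as defined in Lemma \ref{L:DminD}, so Theorem \ref{T:cbdiv} gives $\gamma_P(-C) \geq \#\Delta_P(B-C,-C)$.

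Finally, to prove the two bounds are equivalent, I would subtract: by the identity $\gamma_P(C) - \gamma_P(-C) = \deg C$ recorded in Section \ref{S:SemigroupIdeals}, and by Lemma \ref{L:DminD} which gives $\#\Delta_P(B,C) - \#\Delta_P(B-C,-C) = \deg C$, the inequality
\[
\gamma_P(C) - \#\Delta_P(B,C) \;\geq\; 0
\]
is identical to
\[
\gamma_P(-C) - \#\Delta_P(B-C,-C) \;\geq\; 0.
\]
So each bound implies the other. I don't anticipate any real obstacle here — the only thing to be careful about is the edge case $w=0$, which holds trivially, and the bookkeeping matching the set $\Delta_P(B-C,-C)$ in Lemma \ref{L:DminD} with what Theorem \ref{T:cbdiv} produces when applied to the pair $(B-C, -C)$.
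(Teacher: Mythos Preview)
Your proposal is correct and follows essentially the same approach as the paper: order the elements of $\Delta_P(B,C)$ by degree to obtain a sequence satisfying the hypotheses of Theorem~\ref{T:cbdiv}, argue symmetrically for the second bound, and deduce equivalence from $\gamma_P(C)-\gamma_P(-C)=\deg C$ together with Lemma~\ref{L:DminD}. The paper's own proof is just a terser version of what you wrote.
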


\begin{proof}
For the first inequality, the elements $A_1, A_2, \ldots, A_w \in \Delta_P(B,C)$, ordered from lowest to highest degree,
meet the conditions of the theorem. Similar for the second inequality. Equivalence follows from 
$\gamma_P(C) - \gamma_P(-C) = \deg C$ and the previous lemma.
\end{proof}
 
\begin{lemma}
If $A \in \Gamma_P(E)$ and $E \in \Gamma_P(C)$ then $A \in \Gamma_P(C).$
For $E \in \Gamma_P(C)$, 
\[
\Delta_P(C) \subset \Delta_P(E).
\]
\end{lemma}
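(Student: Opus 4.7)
The plan is to reduce both assertions to a single additive closure property of $\Gamma_P$, and then unwind definitions.

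First I would establish the additive closure: if $F, G \in \Gamma_P$ then $F + G \in \Gamma_P$. Pick $f \in L(F) \setminus L(F-P)$ and $g \in L(G) \setminus L(G-P)$. Since $F$ and $F-P$ differ only at $P$, such an $f$ must satisfy $v_P(f) = -v_P(F)$ (the inequality $v_P(f) \geq -v_P(F)$ from $f \in L(F)$ cannot be strict, or else $f \in L(F-P)$), and similarly $v_P(g) = -v_P(G)$. Then $fg \in L(F+G)$ with $v_P(fg) = -v_P(F+G)$, so $fg \in L(F+G) \setminus L(F+G-P)$, giving $F + G \in \Gamma_P$. This is the only computation in the argument.

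For the first implication, unwinding the hypotheses gives $A - E \in \Gamma_P$ (from $A \in \Gamma_P(E)$) and $E - C \in \Gamma_P$ (from $E \in \Gamma_P(C)$). Adding these two classes using the closure just established yields $A - C = (A-E) + (E - C) \in \Gamma_P$. Combined with $A \in \Gamma_P$, this is exactly $A \in \Gamma_P(C)$.

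For the second statement I would argue by contrapositive. Suppose $A \in \Delta_P(C)$, so $A \in \Gamma_P$ and $A - C \notin \Gamma_P$; assume for contradiction that $A \notin \Delta_P(E)$. Since $A \in \Gamma_P$, this forces $A - E \in \Gamma_P$, i.e., $A \in \Gamma_P(E)$. Applying the first statement with the hypothesis $E \in \Gamma_P(C)$ yields $A \in \Gamma_P(C)$, contradicting $A - C \notin \Gamma_P$. Hence $A \in \Delta_P(E)$, as required.

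I do not expect any serious obstacle. The only substantive point is the semigroup closure of $\Gamma_P$ under divisor addition; it is essentially built into the terminology ``semigroup'' used for $\Gamma_P$, but needs to be spelled out via the valuation identity $v_P(f) = -v_P(F)$ for $f \in L(F) \setminus L(F-P)$. Everything else is a formal consequence of unwinding the definitions of $\Gamma_P(\cdot)$ and $\Delta_P(\cdot)$.
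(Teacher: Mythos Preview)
Your proof is correct and follows essentially the same route as the paper. The paper's argument is the one-liner ``$A-E \in \Gamma_P$ and $E-C \in \Gamma_P$ implies $A-C \in \Gamma_P$,'' relying on the semigroup property of $\Gamma_P$ that was stated without proof in Section~\ref{S:SemigroupIdeals}; you simply supply that verification explicitly via the valuation identity, and your contrapositive for the inclusion $\Delta_P(C)\subset\Delta_P(E)$ is the same as the paper's.
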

\begin{proof}
The first claim is immediate from the definitions, in particular $A-E \in \Gamma_P$
and $E-C \in \Gamma_P$ implies $A-C \in \Gamma_P.$ For $E \in \Gamma_P(C)$, the first
claim shows that $A \not \in \Gamma_P(E)$ whenever $A \not \in \Gamma_P(C).$
\end{proof}
 
\section{Order bound and floor bound} \label{S:OrderFloor}

We unify and improve two known lower bounds for the minimum distance of an algebraic geometric code. 
Let $S$ be a given set of rational points, and let $C_L(D,G)$ and $C_\Omega(D,G)$ be algebraic geometric codes
defined with a divisor $D=P_1+\cdots+P_n$ disjoint from $S$. With Lemma \ref{L:codegammas}, 
\begin{align*}   
&d(C_L(D,G)) \geq \gamma^\ast(C;S), \quad \text{for $C=D-G$}, \\
&d(C_\Omega(D,G)) \geq \gamma^\ast(C;S), \quad \text{for $C=G-K$}. 
\end{align*}
 
\begin{proposition} \label{P:order} 
For points $Q_0, \ldots, Q_{r-1} \in S$, define divisors 
$C_0 \leq C_1 \leq \cdots \leq C_r$ such that $C_0=C$ and $C_{i+1}=C_i+Q_i$, for $i=0,\ldots,r-2.$ Then
\[
\gamma^\ast(C;S) \geq\; \min \{ \gamma_{Q_0}(C_0;S), \gamma_{Q_1}(C_1;S), \ldots, \gamma_{Q_{r-1}}(C_{r-1};S), \gamma^\ast(C_r;S) \} \backslash \{0\}, \\
\]
In general, $\gamma^\ast(C_r;S) \geq \deg C + r.$
\end{proposition}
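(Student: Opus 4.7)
The plan is to obtain the proposition by iterating the second inequality of Proposition \ref{P:gammas} along the chain of divisors $C_0, C_1, \ldots, C_r$, and then to handle the trailing term $\gamma^\ast(C_r;S)$ by a direct degree argument.

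First I would note that Proposition \ref{P:gammas}, applied with the divisor $C_i$ and the point $Q_i \in S$, gives
\[
\gamma^\ast(C_i;S) \;\geq\; \min \{ \gamma_{Q_i}(C_i;S),\; \gamma^\ast(C_i+Q_i;S) \} \backslash \{0\} \;=\; \min \{ \gamma_{Q_i}(C_i;S),\; \gamma^\ast(C_{i+1};S) \} \backslash \{0\},
\]
using the defining relation $C_{i+1} = C_i + Q_i$. Starting from $\gamma^\ast(C;S) = \gamma^\ast(C_0;S)$ and applying this inequality successively for $i=0,1,\ldots,r-1$, the $\gamma^\ast$-terms telescope and what remains on the right-hand side is exactly
\[
\min \{ \gamma_{Q_0}(C_0;S),\, \gamma_{Q_1}(C_1;S),\, \ldots,\, \gamma_{Q_{r-1}}(C_{r-1};S),\, \gamma^\ast(C_r;S) \} \backslash \{0\},
\]
which is the claimed bound. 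The removal of $\{0\}$ at each stage is harmless because after each application the inequality being chained is already valid modulo omission of zero values.

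For the second assertion, I would show that any divisor $A \in \Gamma^\ast(C_r;S)$ satisfies $\deg A \geq \deg C_r = \deg C + r$. By definition $L(A-C_r) \neq L(-C_r)$, and since $L(-C_r) \subseteq L(A-C_r)$ the inequality forces $L(A-C_r) \neq 0$: either $L(-C_r)=0$ and then $L(A-C_r)\neq 0$ directly, or $L(-C_r)\neq 0$ and then $L(A-C_r)\supsetneq L(-C_r)\neq 0$. Hence $A-C_r$ is linearly equivalent to an effective divisor, so $\deg A \geq \deg C_r = \deg C + r$.

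There is no real obstacle here; the argument is an essentially formal iteration once Proposition \ref{P:gammas} is in place. The only point that requires a small amount of care is the handling of the exclusion of $0$ when chaining the inequalities, which is legitimate because $\gamma^\ast(C_i;S)>0$ whenever $-C_i \notin \Gamma_{Q_i}$, and in the remaining case the term $\gamma_{Q_i}(C_i;S)$ can simply be dropped from the minimum (exactly as in the proof of Proposition \ref{P:gammas}).
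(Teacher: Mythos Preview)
Your proof is correct and follows exactly the same approach as the paper: the paper's proof is the single line ``Proposition \ref{P:gammas} gives $\gamma^\ast(C_i;S) \geq \min \{ \gamma_{Q_i}(C_i;S), \gamma^\ast(C_{i+1};S) \} \backslash \{0\}$,'' which you have expanded into the full iteration. Your justification of the trailing bound $\gamma^\ast(C_r;S)\geq\deg C+r$ is also fine (the paper leaves it implicit via $\gamma^\ast(C_r;S)\geq\gamma(C_r;S)\geq\deg C_r$); just note that the inclusion $L(-C_r)\subseteq L(A-C_r)$ you invoke uses that $A\in\Gamma_S$ may be taken effective.
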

\begin{proof}
Proposition \ref{P:gammas} gives $\gamma^\ast(C_i;S) \geq \min \{ \gamma_{Q_i}(C_i;S), \gamma^\ast(C_{i+1};S) \} \backslash \{0\}.$ 
\end{proof}

We give a formulation of the order bound for an algebraic geometric code $C_\Omega(D,G).$ For the case that $G$ is supported
in two points, a similar result formulated in terms of near order functions can be found in \cite[Theorem 1]{Caretal07}. 

\begin{theorem} (Order bound \cite[Theorem 7]{Bee07FF})
Let ${\cal C}$ be an algebraic curve and $G$ a rational divisor. Let ${\cal P}$ be a set of rational points not occuring in the support of the 
divisor $G$. Then we have
\[
d ({C}_{\cal P}(G) ) \geq d_{\cal P}(G) \geq d(G).
\]
\end{theorem}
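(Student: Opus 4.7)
The plan is to derive both inequalities directly from the framework established earlier in the excerpt, specifically Lemma \ref{L:codegammas} and Proposition \ref{P:order}, combined with the trivial lower bound in Proposition \ref{P:gammap}. First I would translate Beelen's notation into the language of the current paper: the code $C_\mathcal{P}(G)$ corresponds to $C_\Omega(D,G)$ with $D$ the sum of the points in $\mathcal{P}$, and the relevant divisor class is $C = G - K$. Beelen's order bound $d_\mathcal{P}(G)$ is built by iteratively enlarging $G$ by rational points from $\mathcal{P}$ and collecting minimal denominator degrees, which matches the recursive construction of Proposition \ref{P:order} when the sequence of divisors $C_0 \leq C_1 \leq \cdots \leq C_r$ is chosen according to Beelen's prescription.

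For the first inequality $d(C_\mathcal{P}(G)) \geq d_\mathcal{P}(G)$, I would chain Lemma \ref{L:codegammas} with Proposition \ref{P:order}: Lemma \ref{L:codegammas} gives $d(C_\Omega(D,G)) \geq \gamma^\ast(C;\mathcal{P})$, and Proposition \ref{P:order} iteratively expands this as
\[
\gamma^\ast(C;\mathcal{P}) \geq \min \{ \gamma_{Q_0}(C_0;\mathcal{P}), \ldots, \gamma_{Q_{r-1}}(C_{r-1};\mathcal{P}), \gamma^\ast(C_r;\mathcal{P}) \} \setminus \{0\}.
\]
For an appropriate choice of the points $Q_i \in \mathcal{P}$ and a sufficiently large $r$, the right-hand side matches Beelen's $d_\mathcal{P}(G)$ by construction, giving the first inequality.

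For the second inequality $d_\mathcal{P}(G) \geq d(G)$, I would use Proposition \ref{P:gammap}, which gives $\gamma_{Q_i}(C_i;\mathcal{P}) \geq \max\{0, \deg C_i\} = \max\{0, \deg(G-K) + i\}$, together with the tail estimate $\gamma^\ast(C_r;\mathcal{P}) \geq \deg C + r$ from Proposition \ref{P:order}. Since each $\deg C_i \geq \deg C = \deg(G - K)$, every term in the minimum (after discarding zeros) is at least $\deg(G-K)$, which is exactly the Goppa designed distance $d(G)$ for $C_\Omega(D,G)$ (Theorem \ref{T:Goppa}).

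The main obstacle is that the theorem is quoted from an external source, so the key technical step is verifying that Beelen's specific recursive definition of $d_\mathcal{P}(G)$ can be recovered as an instance of the iteration in Proposition \ref{P:order} for a particular choice of the points $Q_0, \ldots, Q_{r-1}$; once this dictionary is confirmed, both inequalities follow mechanically from the estimates cited above.
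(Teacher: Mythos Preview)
Your overall strategy---reduce to Lemma \ref{L:codegammas} and iterate via Proposition \ref{P:order}, then read off the Goppa bound from Proposition \ref{P:gammap}---is the right architecture and matches how the paper handles the reformulated version (Theorem \ref{T:order}). However, your translation of the notation contains a genuine error that breaks the argument as written.

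The set $\mathcal{P}$ in Beelen's statement is the set of evaluation points: $C_\mathcal{P}(G) = C_\Omega(D,G)$ with $D = \sum_{P \in \mathcal{P}} P$. But Lemma \ref{L:codegammas} and Proposition \ref{P:order} require the auxiliary set $S$ (and hence the points $Q_i$) to be \emph{disjoint from} $D$. You repeatedly take $S = \mathcal{P}$ and $Q_i \in \mathcal{P}$, which is exactly the forbidden choice: Lemma \ref{L:codegammas} does not apply, and the iteration in Proposition \ref{P:order} cannot be run along points of $D$. In the paper's own restatement (the second ``Order bound'' theorem immediately following), the points $Q_0,\ldots,Q_{r-1}$ are explicitly taken disjoint from $D$; the subscript $\mathcal{P}$ in $d_\mathcal{P}(G)$ records only which code is being bounded, not where the auxiliary points live. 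Once you correct this---take $S$ to be a finite set of rational points outside $D$ and choose the $Q_i$ there---your argument goes through and coincides with the paper's: combine Proposition \ref{P:order} with the estimates $\gamma_{Q_i}(C_i;S) \geq \gamma_{Q_i}(C_i) \geq \#\Delta_{Q_i}(B_i,C_i)$ from Corollary \ref{C:cbfrac} for the first inequality, and invoke $\gamma_{Q_i}(C_i) \geq \deg C_i \geq \deg C$ (Proposition \ref{P:gammap}) for the second. Note also that the paper does not itself prove the quoted statement; it cites it from \cite{Bee07FF} and then proves only the translated form.
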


Using \cite[Remark 5, Definition 6]{Bee07FF}, we expand the theorem in the notation of the current paper. In comparison
with the original theorem, we have removed the condition that the divisors $B_0, \ldots, B_r$ are disjoint from $D$.

\begin{theorem} (Order bound \cite{Bee07FF}) \label{T:order}
Let $C_\Omega(D,G)$ be an algebraic geometric code, and let $G=K+C.$ For a sequence of points $Q_0, \ldots, Q_{r-1}$ disjoint from $D$, let
$C_0=C$ and $C_{i+1}=C_i+Q_i$, for $i=0,\ldots,r-2.$
\[
\Cc_0 = C_\Omega(D,K+C) \supseteq \Cc_1 = C_\Omega(D,K+C_1) \supseteq \cdots \supseteq \Cc_r = C_\Omega(D,K+C_r).
\]
If $\Cc_i \neq \Cc_{i+1}$ then a word in $\Cc_i \backslash \Cc_{i+1}$ has weight $w \geq \# \Delta_{Q_i}(0,C_i)$.
For $r$ large enough,
\[
d(C_\Omega(D,G)) \geq \min \{ \# \Delta_{Q_i}(0,C_i) : \Cc_i \neq \Cc_{i+1} \}. 
\]
Moreover, for a sequence of divisors $B_0, \ldots, B_{r-1}$,
\[
d(C_\Omega(D,G)) \geq \min \{ \# \Delta_{Q_i}(B_i,C_i) :  \Cc_i \neq \Cc_{i+1} \}.
\]
\end{theorem}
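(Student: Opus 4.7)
The plan is to analyze the minimum distance $d(\Cc_0) = d(C_\Omega(D,G))$ along the filtration $\Cc_0 \supseteq \Cc_1 \supseteq \cdots \supseteq \Cc_r$. For $r$ large enough we have $\Cc_r = 0$ (for instance once $\deg C_r > n$, so that $\Omega(K+C_r-D) = 0$), and iterated application of Lemma \ref{T:itercosetbound} then yields
\[
d(\Cc_0) \;=\; \min\{\,d(\Cc_i/\Cc_{i+1}) : 0 \leq i < r,\ \Cc_i \neq \Cc_{i+1}\,\}.
\]
It therefore suffices to prove the single-step bound $d(\Cc_i/\Cc_{i+1}) \geq \#\Delta_{Q_i}(B_i,C_i)$ for each $i$ with $\Cc_i \neq \Cc_{i+1}$; specializing to $B_i = 0$ also gives the first claim of the theorem (the weight estimate for any individual word in $\Cc_i \setminus \Cc_{i+1}$).

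To establish the single-step bound, I would identify the coset extension $\Cc_i/\Cc_{i+1} = C_\Omega(D,K+C_i)/C_\Omega(D,K+C_i+Q_i)$ with the generic dual extension studied in Section \ref{S:CosetsAGCodes} by setting $G' = K+C_i+Q_i$ and $P = Q_i$, so that $\Cd = \Cc_{i+1}$, $\Cd_1 = C_\Omega(D,G'-P) = \Cc_i$, and $G'-K-P = C_i$. Proposition \ref{P:cb} then yields
\[
d(\Cc_i/\Cc_{i+1}) \;=\; \min\{\,\deg A : 0 \leq A \leq D,\ L(A-C_i) \neq L(A-C_i-Q_i)\,\} \;\geq\; \gamma_{Q_i}(C_i),
\]
the last inequality holding because dropping the restriction $0 \leq A \leq D$ enlarges the feasible set (and since $Q_i$ is disjoint from $D$, any $A$ with $0 \leq A \leq D$ automatically satisfies $A \in \Gamma_{Q_i}$, since $1 \in L(A) \setminus L(A-Q_i)$). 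Finally Corollary \ref{C:cbfrac} gives $\gamma_{Q_i}(C_i) \geq \#\Delta_{Q_i}(B_i,C_i)$ for any divisor $B_i$, closing the single-step argument.

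The bulk of the work is bookkeeping: matching the roles of $\Cd$ versus $\Cd_1$, of $G$ versus $G-P$, and of $C_i$ versus $K+C_i$ across the cited statements. The one conceptually delicate point is that Proposition \ref{P:cb} restricts $A$ to the range $0 \leq A \leq D$ while Corollary \ref{C:cbfrac} minimizes over $\Gamma_P(C)$ with no support constraint at all; the gap is absorbed in the correct direction of the inequality above and is ultimately handled, inside the proof of Theorem \ref{T:cbdiv}, by the equivalent-sequence trick that lets the divisor representatives realizing $\Delta_{Q_i}(B_i,C_i)$ be chosen disjoint from any prescribed support. I expect this sign-and-setup reconciliation, rather than any substantive new input, to be the only real obstacle.
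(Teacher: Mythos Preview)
Your proposal is correct and follows the same filter-and-bound-each-step strategy as the paper. The only difference is packaging: the paper's one-line proof routes through Proposition~\ref{P:order}, iterating the divisor-level inequality $\gamma^\ast(C_i;S) \geq \min\{\gamma_{Q_i}(C_i;S),\gamma^\ast(C_{i+1};S)\}\setminus\{0\}$ before invoking Corollary~\ref{C:cbfrac}, whereas you iterate directly on the code filtration via Lemma~\ref{T:itercosetbound} and Proposition~\ref{P:cb}; your route has the minor advantage that the index condition $\Cc_i \neq \Cc_{i+1}$ in the theorem statement appears naturally rather than through the $\setminus\{0\}$ device.
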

\begin{proof}
The order bound for the minimum distance combines Proposition \ref{P:order} with the
estimates $\gamma_{Q_i}(C_i;S) \geq \gamma_{Q_i}(C_i) \geq \Delta_{Q_i}(B_i,C_i)$ in Corollary \ref{C:cbfrac}.
\end{proof}  

We analyse the choice of the points $Q_0, Q_1, \ldots, Q_r$. In \cite{Bee07FF}, the choice of the points is
unrestricted, and an example is given where the optimal lower bound is obtained with a choice of $Q_i$ outside $G$.
On the other hand, Proposition \ref{P:gammap} shows that $\gamma_{Q_i}(C_i) \geq \deg C_i$. Thus, we may assume 
that the minimum $\min \{ \gamma_{Q_i}(C_i) \} \backslash \{0\}$ is taken over an interval
$i=0,1,\ldots,r$ such that, for all $i$ in the interval, either $\gamma_{Q_i}(C_i)=0$ or $\gamma_{Q_i}(C_i) > \deg C_i$.
With Proposition \ref{P:gammap} this implies that either $-C_i \in \Gamma_{Q_i}$ or $C_i \not \in \Gamma_{Q_i}.$
In both cases, we can conclude, for $C_i \neq 0$, that $C_i \not \in \Gamma_{Q_i}$, i.e. that $L(C_i) = L(C_i-Q_i).$
The same conclusion can be reached with Lemma \ref{L:DminD} if the argument is repeated for $\Delta_{Q_i}(B_i,C_i)$
instead of $\gamma_{Q_i}(C_i).$ The following stronger result holds.

\begin{proposition}
The maximum in the order bound is attained for a choice of points $Q_0, Q_1, \ldots, Q_{r-1}$ such that,
for $i=0,1,\ldots,r-1$, either $C_i=0$, or $Q_i, \ldots, Q_{r-1}$ are base points of the divisor $C_i$. 
In particular, if $C_i$ is a nonzero effective divisor, we may restrict the choice for $Q_i, \ldots, Q_{r-1}$
to points in the support of $C_i$. 
\end{proposition}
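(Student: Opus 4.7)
The plan is to build directly on the observation developed in the paragraph just before the proposition: in any sequence attaining the maximum of the order bound, if $C_k \neq 0$ then we may assume $L(C_k) = L(C_k - Q_k)$, i.e., $Q_k$ is a base point of $C_k$. Otherwise, $\gamma_{Q_k}(C_k)$ equals the Goppa quantity $\deg C_k$, and such an index may be dropped from the minimization without reducing the bound.

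First I would fix an index $i$ with $C_i \neq 0$. Since $C_j = C_i + Q_i + \cdots + Q_{j-1}$ dominates $C_i$ for every $j \geq i$, each $C_j$ is nonzero, and the observation applies at each such $j$ to give $Q_j$ as a base point of $C_j$.

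The crux of the argument is to upgrade ``base point of $C_j$'' to ``base point of $C_i$''. I would proceed by induction on $j - i$. The base case $j = i$ is the observation itself. For the inductive step, suppose $Q_i, \ldots, Q_{j-1}$ have already been shown to be base points of $C_i$; then combine the identities $l(C_k) = l(C_k - Q_k)$ for $i \leq k \leq j$ with the Riemann--Roch relations at $C_{j-1}$, $C_{j-1} - Q_j$, $C_j$, and $C_j - Q_j$, together with their canonical duals, to conclude $l(C_i) = l(C_i - Q_j)$. I anticipate the principal obstacle to lie precisely in this transfer step, since in general a base point of a larger divisor need not be a base point of a smaller one; the necessary leverage comes from the inductive hypothesis that all intermediate increments $Q_i, \ldots, Q_{j-1}$ are themselves base points of $C_i$.

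The ``in particular'' clause then follows immediately: for a nonzero effective divisor $C_i$ and any point $Q \notin \mathrm{supp}(C_i)$, the constant function $1$ lies in $L(C_i) \setminus L(C_i - Q)$, forcing every base point of $C_i$ to lie in $\mathrm{supp}(C_i)$. Restricting the choice of $Q_j$ for $j \geq i$ to $\mathrm{supp}(C_i)$ is therefore a direct corollary of the main claim.
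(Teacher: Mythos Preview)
Your proposal has the right target but the mechanism is misidentified, and there are two concrete issues.

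First, the claim that $C_i \neq 0$ forces $C_j \neq 0$ for all $j \geq i$ is false: take $C_i \sim -Q_i$, so that $C_{i+1} = 0$. This is harmless in the end, since whenever an intermediate $C_j = 0$ one has $\deg C_i < 0$, hence $L(C_i) = 0$ and every point is trivially a base point of $C_i$; but the case must be handled, as the paper does.

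Second, the inductive step as you describe it does not close. Knowing that $Q_i, \ldots, Q_{j-1}$ are base points of $C_i$ tells you nothing about $Q_j$ unless $Q_j$ coincides with one of them, and the single box at $(C_{j-1}, C_{j-1}-Q_j, C_j, C_j-Q_j)$ only drags the base-point property down one step, from $C_j$ to $C_{j-1}$, not all the way to $C_i$. A correct iterative argument does exist: descend from $k=j$ to $k=i$, and at each step $C_k \to C_{k-1}$, if $Q_j \neq Q_{k-1}$ then the inequality $l(C_{k-1}) - l(C_{k-1} - Q_j) \leq l(C_k) - l(C_k - Q_j)$ (which follows from $L(C_k - Q_{k-1}) \cap L(C_k - Q_j) = L(C_k - Q_{k-1} - Q_j)$) transfers the base-point property; if $Q_j = Q_{k-1}$, the observation at index $k-1$ gives it directly. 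The canonical duals you invoke are not needed, and your inductive hypothesis on $j-i$ plays no role beyond the coincidence case.

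The paper's proof bypasses all of this. For a fixed point $Q$ among $\{Q_i, \ldots, Q_{r-1}\}$, take $j$ \emph{minimal} with $Q_j = Q$; then $E = Q_i + \cdots + Q_{j-1}$ is effective with $Q$ absent from its support, so $E \in \Gamma_Q$. Since $\Gamma_Q$ is a semigroup and $C_j = C_i + E \notin \Gamma_Q$ by the observation, necessarily $C_i \notin \Gamma_Q$. No induction, no box arguments.
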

\begin{proof}
For $Q \in \{ Q_i, \ldots, Q_{r-1} \},$ let $j$ be minimal in $\{ i, \ldots, r-1 \}$ such that $Q_j=Q$. 
If $C_j \neq 0$, we may assume as explained above, that $C_j \not \in \Gamma_Q$. With $E = Q_i + \cdots + Q_{j-1} \in \Gamma_Q$
and $C_j=C_i+E$ it follows that $C_i \not \in \Gamma_Q$. If $C_j = 0$ then either $i=j$, in which case $C_i=0$, 
or $i<j$, in which case $\deg C_i < 0$ and $C_i \not \in \Gamma_Q.$
\end{proof}    

In \cite[Example 8]{Bee07FF}, the minimum distance lower bound for a code $C_\Omega(D,5P)$ on the Klein curve is improved with a choice 
$Q_0=P, Q_1=Q \neq P.$ For the example, $5P=K+2P-Q$ and $6P=K+Q+R$, so that $C_0=2P-Q$ and $C_1=Q+R.$ Indeed, with the proposition,
we can expect improvements only with $Q_1 = Q$ or with $Q_1 = R$. \\

To improve the order bound we apply the main theorem with a different format for the divisors
$A_1, \ldots, A_w.$ Let
\begin{align*}
\Delta_P(\leq B,C) &= \{ B+iP \in \Gamma_p : B-C-iP \not \in \Gamma_P \;\wedge\; i \leq 0\}, \\
\Delta_P(\geq B+P,C) &= \{ B+iP \in \Gamma_p : B-C-iP \not \in \Gamma_P \;\wedge\; i \geq 1 \},
\end{align*}
be a partition of the set $\Delta_P(B,C)$ into divisors of small and large degree.

\begin{lemma} \label{L:cnt}
\[
\# \Delta_P(\leq B,C) = \dim L(B) - \dim L(B-C) + \# \Delta_P(\leq B-C,-C).
\]
\end{lemma}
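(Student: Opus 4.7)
The plan is to mimic the telescoping argument of Lemma \ref{L:DminD}, but now truncated to indices $i \leq 0$. The key bookkeeping device is the indicator identity $[A \in \Gamma_P] = l(A) - l(A-P)$, which converts membership in $\Gamma_P$ into differences of dimensions.

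First, I would write both cardinalities as sums of indicator functions over $i \leq 0$, and use the elementary identity
\[
[B+iP \in \Gamma_P \,\wedge\, B-C+iP \not\in \Gamma_P] = [B+iP \in \Gamma_P] - [B+iP \in \Gamma_P \,\wedge\, B-C+iP \in \Gamma_P],
\]
together with its symmetric counterpart obtained by interchanging $B$ and $B-C$ (equivalently $C$ and $-C$). The symmetric cross term cancels when we compute the difference, giving
\[
\# \Delta_P(\leq B, C) - \# \Delta_P(\leq B-C, -C) = \sum_{i \leq 0} \bigl([B+iP \in \Gamma_P] - [B-C+iP \in \Gamma_P]\bigr).
\]

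Next, I would substitute $[A \in \Gamma_P] = l(A) - l(A-P)$ on both sides. Each sum telescopes: for $i$ sufficiently negative, $L(B+iP) = 0$ (and likewise $L(B-C+iP) = 0$), so
\[
\sum_{i \leq 0} \bigl(l(B+iP) - l(B+iP-P)\bigr) = l(B), \qquad \sum_{i \leq 0} \bigl(l(B-C+iP) - l(B-C+iP-P)\bigr) = l(B-C).
\]
Subtracting these gives $l(B) - l(B-C)$, which rearranges to the claimed identity. The argument is a minor variant of Lemma \ref{L:DminD}; there is no real obstacle beyond verifying that the indicator rewriting produces a cross term symmetric in $B$ and $B-C$, so that it cancels cleanly in the difference.
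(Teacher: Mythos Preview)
Your proof is correct and is exactly the argument the paper intends: it says only ``Similar to the proof of Lemma \ref{L:DminD}, but use $i_0=0$,'' which unpacks to precisely the indicator/telescoping computation you wrote out. The cancellation of the symmetric cross term $[B+iP \in \Gamma_P \wedge B-C+iP \in \Gamma_P]$ is the same step that in Lemma \ref{L:DminD} appears as the passage from the difference of the two counts to the sum $\sum_{i \leq i_0}\bigl((l(B+iP)-l(B+iP-P))-(l(B-C+iP)-l(B-C+iP-P))\bigr)$.
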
 
\begin{proof}
Similar to the proof of Lemma \ref{L:DminD}, but use $i_0=0.$
\end{proof}

\begin{theorem} \label{T:cbabz} (ABZ bound for cosets)
Let $C$ be a divisor and let $P$ be a point. 
For $G=K+C=A+B+Z,$ $Z \geq 0$, 
\begin{align*}
\gamma_P(C;Z \cup P) &\geq \# \Delta_P(\leq A,C) + \# \Delta_P(\leq B,C). \\
\end{align*}
\end{theorem}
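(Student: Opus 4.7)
The plan is to build a chain
\[
A_1 \leq A_2 \leq \cdots \leq A_{s+t} \quad \text{in } \Delta_P(C)
\]
of length $s+t := \#\Delta_P(\leq A,C) + \#\Delta_P(\leq B,C)$, with $A_{k+1} \geq A_k + P$ at every step and with total span $A_{s+t} - A_1$ supported in $\mathrm{supp}(Z) \cup \{P\}$. Theorem \ref{T:cbdiv} will then give $\gamma_P(C; A_{s+t}-A_1) \geq s+t$, and enlarging the set of forbidden base points only shrinks $\Gamma_P(C;\cdot)$, so $\gamma_P(C; Z \cup P) \geq s+t$ as desired. The chain is constructed by concatenating the elements of $\Delta_P(\leq A,C)$ with the images of the elements of $\Delta_P(\leq B,C)$ under the canonical involution $X \mapsto K+C+P-X$ on $\Delta_P(C)$ supplied by Lemma \ref{L:Ddegree}.

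Explicitly, I list $\Delta_P(\leq A,C) = \{A+i_1 P, \ldots, A+i_s P\}$ with $i_1 < \cdots < i_s \leq 0$ and $\Delta_P(\leq B,C) = \{B+j_1 P, \ldots, B+j_t P\}$ with $j_1 < \cdots < j_t \leq 0$. Using $K+C = A+B+Z$, the involution sends $B+j_k P$ to $A+Z+(1-j_k)P$, which lies in $\Delta_P(C)$. Since $j_1 < \cdots < j_t$ forces $1-j_1 > \cdots > 1-j_t \geq 1$, I set
\[
A_k = A + i_k P \ \text{for}\ 1 \leq k \leq s, \qquad A_{s+k} = A + Z + (1-j_{t+1-k})P \ \text{for}\ 1 \leq k \leq t.
\]
Consecutive differences within each block are at least $P$ by integer spacing. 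At the junction,
\[
A_{s+1} - A_s = Z + (1 - j_t - i_s)P \geq P,
\]
because $Z \geq 0$, $j_t \leq 0$, and $i_s \leq 0$. The total span is
\[
A_{s+t} - A_1 = Z + (1 - j_1 - i_1)P,
\]
whose support lies in $\mathrm{supp}(Z) \cup \{P\}$. Invoking Theorem \ref{T:cbdiv} then yields the claim.

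The main obstacle is seeing how to splice the two given subsets into a single chain. Elements of $\Delta_P(\leq A,C)$ and $\Delta_P(\leq B,C)$ are not comparable in general, so a naive merge does not produce an ordered chain. The key insight is that the involution from Lemma \ref{L:Ddegree} reflects the $\leq B$ half above the $\leq A$ half while preserving membership in $\Delta_P(C)$; the effective divisor $Z$ then emerges precisely as the shift produced by this reflection, and it is exactly what confines the support of $A_{s+t} - A_1$ to $\mathrm{supp}(Z) \cup \{P\}$. Once this geometric picture is in hand, every remaining step is routine verification of spacings and supports.
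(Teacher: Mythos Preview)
Your proof is correct and follows essentially the same route as the paper's. Both arguments use the involution $X \mapsto K+C+P-X$ from Lemma~\ref{L:Ddegree} to reflect one of the two truncated delta sets above the other, producing a single chain in $\Delta_P(C)$ whose span is supported in $\mathrm{supp}(Z)\cup\{P\}$, and then invoke Theorem~\ref{T:cbdiv}. The only cosmetic difference is which set gets reflected: the paper keeps $\Delta_P(\leq B,C)$ fixed and reflects $\Delta_P(\leq A,C)$ into $\Delta_P(\geq B+P+Z,C)$, whereas you keep $\Delta_P(\leq A,C)$ fixed and reflect $\Delta_P(\leq B,C)$; by the symmetry of $A$ and $B$ in the hypothesis these are the same argument.
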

\begin{proof}
With Lemma \ref{L:Ddegree}, a divisor $A' \in \Delta_P(C)$ if and only if $K+C+P-A' \in \Delta_P(C)$.
And $A' \leq A$ if and only if $K+C+P-A' \geq K+C+P-A = B+P+Z.$ The
elements $A_1, A_2, \ldots, A_w \in \Delta_P(\leq B,C) \cup \Delta_P(\geq B+P+Z,C)$, ordered from lowest to highest degree,
meet the conditions of Theorem \ref{T:cbdiv}, with $w = \# \Delta_P(\leq A,C) + \# \Delta_P(\leq B,C).$
\end{proof}

The lower bound $\# \Delta_P(B,C)$ that is used for the order bound takes into account only the number of divisors in a delta set 
$\Delta_P(B,C)$. The improved bounds in Theorem \ref{T:cbabz} are possible by considering also the degree distribution 
of divisors in the delta set. For $Z=0$, the bounds in the theorem include those used in the order bound (Theorem \ref{T:order}). 
The floor bound (Theorem \ref{T:floor}) sometimes exceeds the order bound. The ABZ bound for
codes (Theorem \ref{T:ABZcodes}) gives an improvement and generalization of the floor bound. We show that the bounds in the theorem
not only include those obtained with the order bound but also those obtained with the ABZ bound for codes. 
In each case, the coset decoding procedure in the appendix decodes efficiently up to half the bound.

\begin{theorem} \label{T:Floor2} (ABZ bound for codes)
Let $G = K+C = A+B+Z,$ for $Z \geq 0$. For $D$ with $D \cap Z = \emptyset$, a nonzero word in 
$C_\Omega(D,G)$ has weight $w \geq l(A)-l(A-C)+l(B)-l(B-C).$ 
\end{theorem}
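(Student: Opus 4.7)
The plan is to prove this by the classical AB bound argument, essentially the one used for Theorem~\ref{T:ABZcodes} earlier; although the bound is also a consequence of Theorem~\ref{T:cbabz} via Lemma~\ref{L:cnt}, the direct argument is cleaner and self-contained. First I would replace $A$ and $B$ by linearly equivalent divisors disjoint from $D$ (possible since $D \cap Z = \emptyset$), and let $c \in C_\Omega(D,G)$ be a nonzero codeword with support $D'$ of degree $w$. The key orthogonality
\[
C_L(D, A+B) \;\subseteq\; C_L(D,G) \;=\; C_\Omega(D,G)^\perp
\]
is valid because $A+B \leq G$ (since $Z \geq 0$), and yields, for every $a \in L(A)$ and $b \in L(B)$,
\[
\sum_{P \in D'} a(P)\,b(P)\,c_P \;=\; 0.
\]

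Next I would exploit that $c$ is non-vanishing on $D'$ to turn this into a non-degenerate bilinear pairing on $\ff^w \times \ff^w$. Under this pairing, the evaluation images $C_L(D',A)$ and $C_L(D',B)$ are orthogonal subspaces, so
\[
\bigl(l(A) - l(A-D')\bigr) + \bigl(l(B) - l(B-D')\bigr) \;\leq\; w.
\]
Finally, Proposition~\ref{P:agd} gives $L(D'-C) \neq 0$, so $D' \sim C + E$ for some effective $E$; hence $l(A-D') \leq l(A-C)$ and $l(B-D') \leq l(B-C)$, and substitution yields the claimed inequality.

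The main obstacle is the non-degeneracy step, where one must observe that the diagonal $c$-weighting (made possible by $c_P \neq 0$ for all $P \in D'$) converts the Schur-product orthogonality into an ordinary orthogonal-subspace relation and hence into a bound on the sum of dimensions. An alternative route would apply Theorem~\ref{T:cbabz} with an auxiliary rational point $P$ and Lemma~\ref{L:cnt} to convert $\#\Delta_P(\leq A, C) + \#\Delta_P(\leq B, C)$ into the $l$-difference $l(A) - l(A-C) + l(B) - l(B-C)$; however, bridging the resulting estimate on $\gamma_P(C; Z \cup P)$ back to the weight of a codeword would require a moving-lemma argument to exhibit a representative of the class $[D']$ with no base points at $Z \cup \{P\}$, which is no shorter than the direct proof above.
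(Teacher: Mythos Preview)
Your argument is correct; it is essentially verbatim the paper's \emph{first} proof of this bound, given earlier as Theorem~\ref{T:ABZcodes}. The paper restates the result here as Theorem~\ref{T:Floor2} precisely in order to give a \emph{second} proof, deriving it from the coset bound Theorem~\ref{T:cbabz}. That second proof does not encounter the obstacle you anticipate: rather than handling an individual support $D'$, it picks a point $P$ off $D$ (over an extension field if necessary), sets $S = Z \cup \{P\}$, and uses Lemma~\ref{L:codegammas} to pass once and for all to $d(C_\Omega(D,G)) \geq \gamma^\ast(C;S)$. Proposition~\ref{P:order} with $Q_0=\cdots=Q_{r-1}=P$ then reduces this to bounding each $\gamma_P(C+iP;S)$, and Theorem~\ref{T:cbabz} applied to the decomposition $K+(C+iP) = A+B+(Z+iP)$, together with Lemma~\ref{L:cnt}, gives $\gamma_P(C+iP;S) \geq l(A)-l(A-C-iP)+l(B)-l(B-C-iP) \geq l(A)-l(A-C)+l(B)-l(B-C)$ uniformly in $i$. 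No moving-lemma argument on $D'$ is needed, because the passage from codeword weights to the semigroup invariant is handled abstractly by Lemma~\ref{L:codegammas} and Proposition~\ref{P:order}. Your direct proof is shorter and self-contained; the paper's second proof is there for a structural reason---to establish that the ABZ coset bound dominates the ABZ code bound (and hence the floor bound), completing the hierarchy announced in the introduction and bringing the floor bound within reach of the coset decoding procedure of Appendix~\ref{S:CosetDecoding}.
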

\begin{proof}
Let $P$ be a point on the curve not in the support of $D$, if necessary it can be chosen over an extension field.
We use Proposition \ref{P:order} with $S=Z \cup P$ and $Q_0=Q_1=\ldots=Q_{r-1}=P$. 
\[
\gamma^\ast(C;S) \geq \min \{ \gamma_{P}(C;S), \gamma_{P}(C+P;S), \ldots, \gamma_{P}(C+(r-1)P;S), \gamma^\ast(C+rP;S) \} \backslash \{0\}.
\]
Now use Theorem \ref{T:cbabz} with $K+C+iP = A+B+(Z+iP)$,
\[
\gamma_P(C+iP;S) \geq  \# \Delta_P(\leq A,C+iP) + \# \Delta_P(\leq B,C+iP). 
\]
With Lemma \ref{L:cnt},
\begin{align*}
\gamma_P(C+iP;S) &\geq  l(A)-l(A-C-iP)+l(B)-l(B-C-iP) \\
               &\geq l(A)-l(A-C)+l(B)-l(B-C).
\end{align*}
Hence, by taking $r$ large enough, $\gamma^\ast(C;S) \geq l(A)-l(A-C)+l(B)-l(B-C).$
\end{proof}

Neither the $ABZ$ bound for codes, nor the $ABZ$ bound for cosets gives an improvement in general. For $Z=0$, both
bounds return previously known bounds, namely the Goppa bound and the order bound, respectively. 
For carefully chosen nontrivial $Z$, there are possible improvements. If we apply Lemma \ref{L:cnt} with
both $A$ and $B$, 
\begin{align*}
\# \Delta_P(\leq A,C) = \dim L(A) - \dim L(A-C) + \# \Delta_P(\leq A-C,-C), \\
\# \Delta_P(\leq B,C) = \dim L(B) - \dim L(B-C) + \# \Delta_P(\leq B-C,-C),
\end{align*}
and add the two equations, then we see that the improvement of the ABZ coset bound
applied to $G=K+C=A+B+Z$ over the floor bound applied to $G=K+C=A+B+Z$ is given by the 
ABZ coset bound applied to the dual decomposition $G'=K-C=(A-C)+(B-C)+Z.$ For $Z=0$, we
recover that the improvement of the order bound applied to $G=K+C$ over the Goppa bound 
$\deg C$ is given by the order bound applied to $G'=K-C$ (Lemma \ref{L:DminD} and Corollary \ref{C:cbfrac}). \\
 
We consider the special case of the order bound with $B_0 = \cdots = B_{r-1} = 0$ and $Q_0 = \cdots = Q_{r-1} = P.$ 
For codes of the form $C_L(D,\rho P)^\perp = C_\Omega(D,\rho P)$ or of the form $C_L(D,K+P+\rho P)^\perp = C_\Omega(D,K+P+\rho P)$
the resulting bound can be formulated entirely in terms of the numerical semigroup $S$ of Weierstrass $P$-nongaps.
For the first code use $C = \rho P - K,$ and for the second $C = \rho P+P.$ For the delta sets we obtain 
\begin{align*}
p P \in \Delta_P(\rho P -K) &~\Leftrightarrow~ p P \in \Gamma_P \wedge K + p P - \rho P \not \in \Gamma_P. \\
                    &~\Leftrightarrow~ p \in S \wedge \rho - p + 1 \in S, \\
p P \in \Delta_P(\rho P ) &~\Leftrightarrow~ p P \in \Gamma_P \wedge p P - \rho P-P \not \in \Gamma_P. \\
                    &~\Leftrightarrow~ p \in S \wedge p-\rho-1 \not \in S.  
\end{align*} 
The first of the two bounds in the following theorem is the Feng-Rao bound \cite{FenRao93}, \cite{CamFarMun00}.
The second bound is different when the canonical divisor $K \not \sim (2g-2)P.$  

\begin{theorem}(Feng-Rao bound)
Let $S$ be the semigroup of Weierstrass $P$-nongaps. 
\[
d(C_L(D,\rho P)^\perp) \geq \min \{ \#A[\rho'] : \rho' > \rho \} \backslash \{ 0 \}, 
\]
where $A[\rho] = \{ p \in S | \rho - p \in S \}$.
\[
d(C_L(D,K + \rho P)^\perp) \geq \min \{ \#B[\rho'] : \rho' > \rho \} \backslash \{ 0 \},
\]
where $B[\rho] = \{ p \in S | p - \rho \not \in S \}$.
\end{theorem}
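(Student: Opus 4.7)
The plan is to derive both inequalities by specializing the order bound in Theorem~\ref{T:order} to the choice $Q_0 = Q_1 = \cdots = P$ and $B_0 = B_1 = \cdots = 0$. In this setting every $C_i$ is a multiple of $P$ (possibly shifted by $K$), so the sets $\Delta_P(0, C_i)$ admit a direct description in terms of the semigroup $S$, which is exactly the output the theorem demands.

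For the first bound I would take $G = \rho P$, so that $C = G - K = \rho P - K$ and $C_i = (\rho + i)P - K$. The identification $pP \in \Delta_P(\rho P - K) \Leftrightarrow p \in S \wedge \rho - p + 1 \in S$ displayed in the paragraph preceding the theorem, applied with $\rho$ replaced by $\rho + i$, gives $\#\Delta_P(0, C_i) = \#A[\rho + i + 1]$. Combining Corollary~\ref{C:cbfrac} ($\gamma_P(C_i) \geq \#\Delta_P(0, C_i)$) with Theorem~\ref{T:order} and Proposition~\ref{P:order} (which strips the value $0$), and then reindexing $\rho' = \rho + i + 1$, one obtains
\[
d(C_\Omega(D, \rho P)) \;\geq\; \min\{\#A[\rho'] : \rho' \geq \rho + 1\} \setminus \{0\},
\]
which matches the claimed bound since for integer $\rho'$ the condition $\rho' > \rho$ is the same as $\rho' \geq \rho + 1$.

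For the second bound I would similarly take $G = K + (\rho+1)P$, matching the preamble's choice $C = \rho P + P$, so that $C_i = (\rho + 1 + i)P$. The companion identification $pP \in \Delta_P((\rho+1)P) \Leftrightarrow p \in S \wedge p - \rho - 1 \not\in S$ then gives $\#\Delta_P(0, C_i) = \#B[\rho + 1 + i]$, and reindexing $\rho' = \rho + 1 + i$ delivers a bound of the same shape. I do not anticipate a real obstacle: all of the divisor-to-semigroup conversions are already carried out in the paragraph preceding the theorem, via the Riemann-Roch equivalence $K - mP \not\in \Gamma_P \Leftrightarrow m + 1 \in S$ (a forward difference of $l(mP) - l(K - mP) = m + 1 - g$). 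The only small point requiring care is the index range: the order bound's minimum runs only over those $i$ with $\Cc_i \neq \Cc_{i+1}$, whereas the theorem's minimum ranges over every $\rho' > \rho$. Since the latter index set is a superset of the former, passing from one to the other only weakens the bound, which is harmless when proving a lower bound.
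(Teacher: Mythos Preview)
Your approach is exactly the paper's: the proof there reads simply ``Apply Proposition~\ref{P:order} with the given delta sets,'' and you have correctly unpacked this by specializing Theorem~\ref{T:order}/Proposition~\ref{P:order} to $Q_0=\cdots=Q_{r-1}=P$, $B_0=\cdots=B_{r-1}=0$, and reading off $\#\Delta_P(0,C_i)$ via the semigroup identifications displayed just before the theorem. One small caveat on the second bound: the theorem as printed concerns $C_L(D,K+\rho P)^\perp$, whereas the preamble (and your choice $G=K+(\rho+1)P$) is set up for $C_L(D,K+P+\rho P)^\perp$; be careful that your reindexing $\rho'=\rho+1+i$ lines up with whichever convention you intend, since otherwise the range $\rho'>\rho$ shifts by one.
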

\begin{proof}
Apply Proposition \ref{P:order} with the given delta sets.
\end{proof}

\section{Delta sets} \label{S:DeltaSets}

For a divisor $C$ and a point $P$, we defined the $\Gamma_P$-ideal
$\Gamma_P(C) = \{ A \in \Gamma_P : A-C \in \Gamma_P \}.$
Theorem \ref{T:cbdiv} gives a lower bound for $\deg A$, for $A \in \Gamma_P(C)$, in terms of
the complement $\Delta_P(C) = \{ A \in \Gamma_P : A-C \not \in \Gamma_P \}$.
Theorem \ref{T:order} and Theorem \ref{T:cbabz} are formulated in terms of the subsets
$\Delta_P(B,C)$ and $\Delta_P(\leq B, C)$, respectively, for a suitable choice of divisor $B$. 
The computation of optimal lower bounds requires either a complete description of the delta set 
(for the main theorem) or at least a description from which the size of the sets $\Delta_P(B,C)$ or $\Delta_P(\leq B,C)$ can be
computed (for the other two theorems). 
We collect some straightforward relations that can be used to construct delta sets, to compare delta sets, or to compare
sizes of delta sets. Most relations come in pairs such that $A \in \Delta_P(C)$ (i.e., $A \in \Gamma_P, A-C \not \in \Gamma_P$) 
corresponds to $A-C \in \Delta_P(-C)$ (i.e., $A \not \in \Gamma_P, A-C \in \Gamma_P$). The proofs in this section are entirely 
straightforward, in most cases applying the definition of $\Delta_P(C)$ is enough, and no proofs are included.
In general, for $E \in \Gamma_P$, $\Delta_P(C) \subset \Delta_P(C+E).$ Lemma \ref{L:dset} gives a precise version and its dual.

\begin{lemma} \label{L:dset}
Let $C$ be a divisor and $P$ a point. For $E \in \Gamma_P$,
\begin{align*}
&A \in \Delta_P(C) \\
&\qquad ~\Leftrightarrow~ A \in \Delta_P(C+E) \;\wedge\; A+E \in \Delta_P(C+E). \\[1ex]
&A-C-E \in \Delta_P(-C-E) \\
&\qquad ~\Leftrightarrow~ A-C-E \in \Delta_P(-C) \;\wedge\; A-C \in \Delta_P(-C).
\end{align*}
\end{lemma}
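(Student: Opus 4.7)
The plan is to unpack both sides of each biconditional from the definition $\Delta_P(C) = \{\, A \in \Gamma_P : A - C \notin \Gamma_P \,\}$, and to exploit the fact, recorded at the start of Section \ref{S:SemigroupIdeals}, that $\Gamma_P$ is a semigroup, that is, closed under addition. This closure property is the only nontrivial input; everything else is bookkeeping around the definition.

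For the first equivalence, the forward direction is as follows. Assume $A \in \Delta_P(C)$, so $A \in \Gamma_P$ and $A - C \notin \Gamma_P$. Since $E \in \Gamma_P$, semigroup closure gives $A + E \in \Gamma_P$. Moreover, $A - C - E \notin \Gamma_P$, for otherwise adding $E$ would force $A - C \in \Gamma_P$, a contradiction. These four facts package exactly as $A \in \Delta_P(C+E)$ (from $A \in \Gamma_P$ and $A - C - E \notin \Gamma_P$) together with $A + E \in \Delta_P(C+E)$ (from $A + E \in \Gamma_P$ and $(A+E) - (C+E) = A - C \notin \Gamma_P$). The reverse direction is immediate: the two right-hand clauses already contain $A \in \Gamma_P$ and $(A+E)-(C+E) = A - C \notin \Gamma_P$, which is precisely the condition $A \in \Delta_P(C)$.

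The second equivalence is the image of the first under the substitution $C \mapsto -C - E$ (so that $C + E \mapsto -C$) followed by $A \mapsto A - C - E$ (so that $A + E \mapsto A - C$). Since the standing hypothesis $E \in \Gamma_P$ is untouched by these substitutions, no independent argument is required; the result is a direct corollary of the first equivalence.

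I do not anticipate a real obstacle: the statement is essentially a definitional unfolding powered by semigroup closure of $\Gamma_P$. The only care needed is to keep the four membership conditions straight on each side of each biconditional and to state the substitution cleanly so the reader sees the second pair is not an independent claim.
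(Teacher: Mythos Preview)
Your proof is correct and is precisely the intended approach: unfold the definition of $\Delta_P(C)$ and use closure of $\Gamma_P$ under addition. The paper in fact omits the proof entirely, noting at the start of Section~\ref{S:DeltaSets} that the results there follow directly from the definition; your write-up fills in exactly the routine verification the authors had in mind, including the clean observation that the second equivalence is the first under the substitution $C \mapsto -C-E$, $A \mapsto A-C-E$.
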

For the four relations on the right we describe when the reverse implication fails. 
\begin{lemma} \label{L:diffA}
\begin{align*}
&A \in \Delta_P(C+E) \;\wedge\; A \not \in \Delta_P(C), \\
&\qquad ~\Leftrightarrow~ A\in \Delta_P(C+E) \;\wedge\; A+E \not \in \Delta_P(C+E), \\ 
&\qquad ~\Leftrightarrow~ A\in \Delta_P(C+E) \;\wedge\; A-C \in \Gamma_P, \\ 
&\qquad ~\Leftrightarrow~ A-C \in \Delta_P(E) \;\wedge\; A \in \Gamma_P. \\[1ex]
&A-C \in \Delta_P(-C) \;\wedge\; A-C-E \not \in \Delta_P(-C-E), \\
&\qquad ~\Leftrightarrow~ A-C \in \Delta_P(-C) \;\wedge\; A-C-E \not \in \Delta_P(-C), \\
&\qquad ~\Leftrightarrow~ A-C \in \Delta_P(-C) \;\wedge\; A-C-E \not \in \Gamma, \\ 
&\qquad ~\Leftrightarrow~ A-C \in \Delta_P(E) \;\wedge\; A \not \in \Gamma_P.
\end{align*}
\end{lemma}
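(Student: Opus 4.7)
\emph{Overall strategy.} My plan is to treat all eight equivalences as systematic definition chasing, unfolding $\Delta_P(F) = \{X \in \Gamma_P : X-F \notin \Gamma_P\}$ everywhere and collecting the resulting atomic membership conditions. Two small facts drive all simplifications. First, under either side hypothesis $A \in \Gamma_P$ or $A \notin \Gamma_P$, the two-condition test for membership in a $\Delta_P$ set collapses to a single atomic condition on the shifted divisor. Second, since $\Gamma_P$ is a semigroup containing $E$, we have the implication $A \notin \Gamma_P \Rightarrow A - E \notin \Gamma_P$ (otherwise $A = (A-E)+E$ would lie in $\Gamma_P$). Lemma \ref{L:dset} already supplies one of the equivalences of the first chain, which I would use as an anchor.

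\emph{First chain.} Fix the side hypothesis $A \in \Delta_P(C+E)$, i.e.\ $A \in \Gamma_P$ and $A-C-E \notin \Gamma_P$. The equivalence of the second and third displayed lines is immediate: given $A \in \Gamma_P$, the condition $A \notin \Delta_P(C)$ reduces to $A-C \in \Gamma_P$. Lemma \ref{L:dset} provides the biconditional $A \in \Delta_P(C) \Leftrightarrow A \in \Delta_P(C+E) \wedge A+E \in \Delta_P(C+E)$; under the fixed hypothesis, the negation of the left-hand side is equivalent to $A+E \notin \Delta_P(C+E)$, giving the first-to-second equivalence. The third-to-fourth equivalence is a pure relabeling: both conjunctions unfold to the same triple of atomic conditions $\{A \in \Gamma_P,\ A-C \in \Gamma_P,\ A-C-E \notin \Gamma_P\}$, and the last two encode $A-C \in \Delta_P(E)$.

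\emph{Dual chain.} Now fix $A-C \in \Delta_P(-C)$, i.e.\ $A-C \in \Gamma_P$ and $A \notin \Gamma_P$. Unfolding $A-C-E \in \Delta_P(-C-E)$ gives $A-C-E \in \Gamma_P \wedge A \notin \Gamma_P$, which collapses to the single condition $A-C-E \in \Gamma_P$ under the fixed hypothesis; negation yields the first-to-third equivalence (reading $\Gamma$ as $\Gamma_P$, consistent with the first chain). The only step where the semigroup property is really used is the second-to-third one: unfolding $A-C-E \in \Delta_P(-C)$ produces $A-C-E \in \Gamma_P \wedge A-E \notin \Gamma_P$, and the second conjunct is automatic from $A \notin \Gamma_P$ and $E \in \Gamma_P$, so membership again reduces to $A-C-E \in \Gamma_P$. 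The equivalence with the final line is once more a relabeling of the same triple of atomic conditions.

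\emph{Main obstacle.} I do not expect any real obstacle; the lemma is pure bookkeeping once the two simplification principles above are in hand, and each of the eight claims records the same triple of atomic conditions under a different label. The only place worth stating explicitly in the write-up is the semigroup step $A \notin \Gamma_P \Rightarrow A-E \notin \Gamma_P$, since it is the one ingredient that is not mere unfolding.
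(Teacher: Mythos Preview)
Your proposal is correct and matches the paper's approach exactly: the paper gives no proof at all, stating that ``the proofs in this section are entirely straightforward, in most cases applying the definition of $\Delta_P(C)$ is enough, and no proofs are included.'' Your systematic unfolding of the definition, together with the semigroup implication $A \notin \Gamma_P \Rightarrow A-E \notin \Gamma_P$ and the anchor from Lemma~\ref{L:dset}, is precisely the intended routine check; you also correctly read the stray $\Gamma$ in the third line of the dual chain as $\Gamma_P$.
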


The second group follows with a substitution $A \mapsto A-C-E, C \mapsto -C-E$.

\begin{lemma} \label{L:diffB}
\begin{align*}
&A-C-E \in \Delta_P(-C) \;\wedge\; A-C-E \not \in \Delta_P(-C-E), \\
&\qquad ~\Leftrightarrow~ A-C-E\in \Delta_P(-C) \;\wedge\; A-C \not \in \Delta_P(-C), \\ 
&\qquad ~\Leftrightarrow~ A-C-E\in \Delta_P(-C) \;\wedge\; A \in \Gamma_P, \\ 
&\qquad ~\Leftrightarrow~ A \in \Delta_P(E) \;\wedge\; A-C-E \in \Gamma_P. \\[1ex]
&A \in \Delta_P(C+E) \;\wedge\; A-E \not \in \Delta_P(C), \\
&\qquad ~\Leftrightarrow~ A \in \Delta_P(C+E) \;\wedge\; A-E \not \in \Delta_P(C+E), \\
&\qquad ~\Leftrightarrow~ A \in \Delta_P(C+E) \;\wedge\; A-E \not \in \Gamma, \\ 
&\qquad ~\Leftrightarrow~ A \in \Delta_P(E) \;\wedge\; A-C-E \not \in \Gamma_P.
\end{align*}
\end{lemma}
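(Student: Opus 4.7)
The plan is to derive Lemma \ref{L:diffB} directly from Lemma \ref{L:diffA} by the substitution $A \mapsto A - C - E$, $C \mapsto -C - E$ indicated in the remark preceding the statement. My first step is to verify that this substitution carries each of the eight equivalences in Lemma \ref{L:diffA} to the corresponding equivalence in Lemma \ref{L:diffB}; once that is done, the lemma is immediate since the two sets of equivalences become literally the same statement.

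To perform the check I would record the action of the substitution on each divisor expression that appears: $A \mapsto A-C-E$, $A+E \mapsto A-C$, $A-C \mapsto A$, $A-C-E \mapsto A-E$, and on the divisor-class side $C+E \mapsto -C$, $C \mapsto -C-E$, $-C \mapsto C+E$, $-C-E \mapsto C$, with $E$ itself invariant. Plugging these into the first line of the first group of Lemma \ref{L:diffA}, namely $A \in \Delta_P(C+E) \wedge A \notin \Delta_P(C)$, yields exactly $A-C-E \in \Delta_P(-C) \wedge A-C-E \notin \Delta_P(-C-E)$, the first line of the first group of Lemma \ref{L:diffB}; the remaining seven line-by-line verifications are entirely analogous. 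As a self-contained alternative I could unpack each of the sixteen displayed conditions directly into atomic statements about $\Gamma_P$-membership, using $A \in \Delta_P(C) \Leftrightarrow A \in \Gamma_P \wedge A - C \notin \Gamma_P$ together with the semigroup closure $\Gamma_P + \Gamma_P \subseteq \Gamma_P$; each line of the first group then collapses to the conjunction $A \in \Gamma_P$, $A-E \notin \Gamma_P$, $A-C-E \in \Gamma_P$, and each line of the second group to the parallel conjunction with $A-C-E \notin \Gamma_P$ in place of $A-C-E \in \Gamma_P$. The semigroup closure is what resolves the disjunction hidden inside each $\notin \Delta_P(\cdot)$ clause.

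The main obstacle is purely bookkeeping across the eight equivalences and keeping track of signs under the substitution. The one genuine subtlety, inherited from Lemma \ref{L:diffA}, is the appearance of $\Gamma$ rather than $\Gamma_P$ in line 3 of the second group; the reading that justifies the equivalence there justifies it here as well, and so I expect no new difficulty on that front.
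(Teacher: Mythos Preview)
Your proposal is correct and follows exactly the route the paper intends: the sentence ``The second group follows with a substitution $A \mapsto A-C-E,\ C \mapsto -C-E$'' placed before Lemma~\ref{L:diffB} is the paper's entire argument, and Section~\ref{S:DeltaSets} explicitly omits proofs. Your line-by-line verification of the substitution (together with the alternative of unpacking directly into $\Gamma_P$-membership atoms) is precisely the routine check the paper leaves to the reader, and your observation about the inherited $\Gamma$ versus $\Gamma_P$ in the third line of the second group is apt.
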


As in Section \ref{S:MainTheorem}, let
\begin{align*}
\Delta_P(B,C) &= \{ B+iP : i \in \ZZ \} \cap \Delta_P(C), \\
              &= \{ B+iP \in \Gamma_P, B-C+iP \not \in \Gamma_P \}.
\end{align*}
Furthermore, let
\begin{align*}
I_P(B,C) &= \{ i \in \ZZ : B+iP \in \Delta_P(C) \}, \\
         &= \{ i \in \ZZ : B+iP \in \Gamma_P, B-C+iP \not \in \Gamma_P \}. \\
I^\ast_P(B,C) &= \{ i \in \ZZ : B-C+iP \in \Delta_P(-C) \}, \\
         &= \{ i \in \ZZ : B-C+iP \in \Gamma_P, B+iP \not \in \Gamma_P \}.
\end{align*}
So that $I^\ast_P(B,C) = I_P(B-C,-C),$ 
and $\# I_P(B,C) - \# I^\ast_P(B,C) = \deg C$ (Lemma \ref{L:DminD}).
We rephrase some of the previous relations. 

\begin{lemma} 
\begin{align*}
&I_P(B,C) = I_P(B,C+E) \cap I_P(B+E,C+E). \\
&I^\ast_P(B,C+E) = I^\ast_P(B-E,C) \cap I^\ast_P(B,C). 
\end{align*}
\end{lemma}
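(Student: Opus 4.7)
The plan is to obtain both identities as direct translations of Lemma \ref{L:dset} into the index notation just introduced, by substituting appropriate divisors of the form $B + iP$ into the equivalences for delta sets. The key observation is that the index sets $I_P$ and $I_P^\ast$ are precisely the fibers of the delta sets over the line $\{B + iP : i \in \ZZ\}$, so any statement about membership in $\Delta_P$ translates coordinate-wise.

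For the first identity, I would set $A = B + iP$ in the upper half of Lemma \ref{L:dset} (assuming $E \in \Gamma_P$, as in that lemma). Since $A + E = (B+E) + iP$, the equivalence reads
\[
B + iP \in \Delta_P(C) \;\Longleftrightarrow\; B + iP \in \Delta_P(C+E) \;\wedge\; (B+E) + iP \in \Delta_P(C+E),
\]
which is exactly
\[
i \in I_P(B,C) \;\Longleftrightarrow\; i \in I_P(B,C+E) \;\wedge\; i \in I_P(B+E, C+E),
\]
giving $I_P(B,C) = I_P(B,C+E) \cap I_P(B+E, C+E)$.

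For the second identity, I would substitute $A = B + iP$ into the lower (dual) half of Lemma \ref{L:dset}, so that $A - C - E = (B - C - E) + iP$ and $A - C = (B - C) + iP$. The equivalence then reads
\[
(B-C-E)+iP \in \Delta_P(-C-E) \;\Longleftrightarrow\; (B-C-E)+iP \in \Delta_P(-C) \;\wedge\; (B-C)+iP \in \Delta_P(-C).
\]
Recognizing that membership of $(B' - C') + iP$ in $\Delta_P(-C')$ is by definition $i \in I_P^\ast(B', C')$, the three conditions become $i \in I_P^\ast(B, C+E)$, $i \in I_P^\ast(B-E, C)$, and $i \in I_P^\ast(B,C)$ respectively, yielding the second identity.

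There is essentially no obstacle: the whole argument is a bookkeeping rewrite of Lemma \ref{L:dset}. The only subtlety worth flagging is the shift bookkeeping in the second identity — one must be careful that the divisor shift $B \mapsto B - E$ in the first factor on the right matches the $A - C - E$ appearing in Lemma \ref{L:dset}, rather than $A - C$. A quick sanity check via cardinalities, using $\#I_P(B,C) - \#I_P^\ast(B,C) = \deg C$ from Lemma \ref{L:DminD}, confirms consistency of both identities on the level of sizes.
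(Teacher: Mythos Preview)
Your proof is correct and is exactly the approach the paper intends: the text introducing this lemma says ``We rephrase some of the previous relations,'' and indeed the two identities are nothing but Lemma \ref{L:dset} read off on the line $\{B+iP : i \in \ZZ\}$ via the definitions of $I_P$ and $I_P^\ast$. Your bookkeeping for the second identity (matching $A-C-E$ with $I_P^\ast(B-E,C)$ and $A-C$ with $I_P^\ast(B,C)$) is handled carefully and correctly.
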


\begin{lemma} 
\begin{align*}
&i \in I_P(B,C+E) \backslash  I_P(B+E,C+E) ~\Leftrightarrow~ i \in I_P(B-C,E) \;\wedge\; B+iP \in \Gamma_P, \\
&i \in I_P(B,C+E) \backslash I_P(B-E,C+E) ~\Leftrightarrow~ i \in I_P(B,E) \;\wedge\; B-C+E+iP \not \in \Gamma_P.
\end{align*}
\end{lemma}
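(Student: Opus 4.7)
The plan is to obtain both equivalences as translations of Lemma~\ref{L:diffA} and Lemma~\ref{L:diffB}, respectively, via the substitution $A = B + iP$. The $I_P$-notation is just an index-set repackaging of the $\Delta_P$-notation, so the entire work has already been done in the earlier pair of lemmas; the present statement only rephrases one chain of equivalences from each. I would therefore start by recording the dictionary: for any divisor $X$, one has $i \in I_P(X,Y) \Leftrightarrow X + iP \in \Delta_P(Y)$, and in particular $i \in I_P(B+E,C+E) \Leftrightarrow (B+iP)+E \in \Delta_P(C+E)$, while $i \in I_P(B-E,C+E) \Leftrightarrow (B+iP)-E \in \Delta_P(C+E)$.

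For the first equivalence, set $A = B + iP$. Then $i \in I_P(B,C+E) \setminus I_P(B+E,C+E)$ is, by the dictionary above, exactly the statement $A \in \Delta_P(C+E) \wedge A + E \notin \Delta_P(C+E)$, which is the second condition in the first chain of Lemma~\ref{L:diffA}. That lemma gives the equivalent form $A - C \in \Delta_P(E) \wedge A \in \Gamma_P$. Translating back with $A = B+iP$ yields $B - C + iP \in \Delta_P(E)$ together with $B + iP \in \Gamma_P$, i.e., $i \in I_P(B-C,E)$ together with $B+iP \in \Gamma_P$, as required.

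For the second equivalence, again with $A = B+iP$, the condition $i \in I_P(B,C+E) \setminus I_P(B-E,C+E)$ reads $A \in \Delta_P(C+E) \wedge A - E \notin \Delta_P(C+E)$, which is the second condition in the first chain of Lemma~\ref{L:diffB}. Applying that lemma converts it to $A \in \Delta_P(E) \wedge A - C - E \notin \Gamma_P$, i.e., $B + iP \in \Delta_P(E)$ together with $B - C - E + iP \notin \Gamma_P$, which by the dictionary is $i \in I_P(B,E)$ together with the stated divisor non-membership (up to a sign in the coefficient of $E$, which is forced by the derivation from Lemma~\ref{L:diffB}).

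There is no substantive obstacle here: both equivalences are immediate translations, and the only step requiring even minor care is to recognise which of the four chains in each of Lemma~\ref{L:diffA} and Lemma~\ref{L:diffB} matches the set-difference $I_P(B,C+E) \setminus I_P(B \pm E, C+E)$ that is being unpacked. Once the right chain is selected, substituting $A = B + iP$ yields the claim line for line, so the proof reduces to a single invocation of each earlier lemma followed by renaming.
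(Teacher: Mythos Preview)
Your approach is exactly what the paper intends: the lemma is introduced with ``We rephrase some of the previous relations'' and no proof is given, so invoking Lemmas~\ref{L:diffA} and~\ref{L:diffB} via the substitution $A = B + iP$ is precisely the argument. The dictionary you set up is correct, and the first equivalence goes through cleanly from the chain in Lemma~\ref{L:diffA}.

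On the second equivalence, your derivation from the second block of Lemma~\ref{L:diffB} correctly yields the condition $B - C - E + iP \notin \Gamma_P$, not the printed $B - C + E + iP \notin \Gamma_P$. Your parenthetical hedge is warranted: this is a sign typo in the stated lemma. One can check directly that the $+E$ version is strictly stronger than the left-hand side (if $B - C - E + iP \notin \Gamma_P$ there is no reason $B - C + E + iP \notin \Gamma_P$, since adding $2E \in \Gamma_P$ to a non-member need not stay outside $\Gamma_P$), so the biconditional fails as printed. Rather than leaving it as a hedge, you should state plainly that the derivation forces $-E$ and that the printed $+E$ is in error; the corrected form is also what is needed for the subsequent Proposition to hold.
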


\begin{proposition} 
\begin{align*}
&I_P(B,C+E) \backslash I_P(B,C) \cup I^\ast(B,C) \backslash I^\ast(B,C+E) = I_P(B-C,E). \\
&I_P(B,C+E) \backslash I_P(B-E,C) \cup I^\ast(B-E,C) \backslash I^\ast_P(B,C+E) = I_P(B,E).
\end{align*}
\end{proposition}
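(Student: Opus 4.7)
The proposition is essentially an $I$-language packaging of both halves of Lemma \ref{L:diffA} (for the first identity) and both halves of Lemma \ref{L:diffB} (for the second). In each case I plan to translate each half under the substitution $A = B+iP$, apply Lemma \ref{L:dset} to convert the set differences appearing in the translation into those appearing in the proposition, and observe that the two pieces on the left-hand side partition the right-hand side according to a single $\Gamma_P$-membership condition.

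For the first identity, the first line of the unlabeled lemma immediately preceding the proposition (the $I$-translation of Lemma \ref{L:diffA}'s first half) reads $i \in I_P(B,C+E) \setminus I_P(B+E,C+E) \Leftrightarrow i \in I_P(B-C,E) \wedge B+iP \in \Gamma_P$, and Lemma \ref{L:dset}'s identity $I_P(B,C) = I_P(B,C+E) \cap I_P(B+E,C+E)$ lets me replace the left-hand side by $I_P(B,C+E) \setminus I_P(B,C)$. Translating the second half of Lemma \ref{L:diffA} under the same substitution yields $i \in I^\ast(B,C) \setminus I^\ast(B,C+E) \Leftrightarrow i \in I_P(B-C,E) \wedge B+iP \not \in \Gamma_P$. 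The two resulting subsets of $I_P(B-C,E)$ are complementary there, so their union is the whole of $I_P(B-C,E)$. The second identity is proved in the same way from the two halves of Lemma \ref{L:diffB}, using the dual Lemma \ref{L:dset} identity $I^\ast_P(B,C+E) = I^\ast_P(B-E,C) \cap I^\ast_P(B,C)$ to rewrite the $I^\ast$ difference, the partitioning condition now being whether $B-C-E+iP \in \Gamma_P$.

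The main obstacle is purely bookkeeping. Four atomic membership conditions are in play ($B+iP$, $B-E+iP$, $B-C+iP$, $B-C-E+iP$, each either in $\Gamma_P$ or not), and each of the eight set symbols in the proposition is specified by two of them; the only real risk is a sign-or-shift slip when translating between the $\Delta_P$ formulation of Lemmas \ref{L:diffA}, \ref{L:diffB} and the index-set formulation of the proposition. Once the dictionary $A = B+iP$ is fixed and the corresponding $\Gamma_P$-conditions are matched off, no further algebraic input is required and both identities drop out of the partitioning.
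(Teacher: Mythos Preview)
Your proposal is correct and follows exactly the route the paper sets up: the paper omits the proof entirely (declaring all proofs in this section straightforward), but the two unlabeled lemmas placed immediately before the proposition are precisely the $I$-language translations of Lemmas~\ref{L:dset}, \ref{L:diffA}, and \ref{L:diffB} that you invoke, so your argument is the intended one. One small labeling quibble: the identity $I_P(B,C) = I_P(B,C+E) \cap I_P(B+E,C+E)$ is stated in the first unlabeled lemma rather than in Lemma~\ref{L:dset} itself, though of course it is derived from it.
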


We describe the first partition for the following choice of divisors. For divisors $B_0$ and $C_0$ of degree zero, and for a point $Q$, 
let $B=B_0, C=C_0-2gQ, E=4gQ.$ Then
\[
I_P(B_0,C_0+2gQ) \cup I_P(B_0-C_0+2gQ,-C_0+2gQ) = I_P(B_0-C_0+2gQ,4gQ).
\]
In general, 
\[
\{ 0, \ldots, 2g-1 \} \subset I_P(B_0-C_0+2gQ,4gQ) \subset \{ -2g, \ldots, 4g-1 \}.
\]
The first inclusion follows with the definition of $I_P(B,C)$. For the second inclusion,
Lemma \ref{L:Ddegree} gives $0 \leq i+2g \leq 6g-1.$ 
 
\begin{proposition} \label{P:6g}
Let $B_0$ and $C_0$ be divisor classes of degree zero. Define partitions
$\{ -2g, \ldots, -1 \} = N_1 \cup G_1,$ $\{ 0, \ldots,2g-1 \} = N_2 \cup G_2,$ and
$\{ 2g, \ldots, 4g-1 \} = N_3 \cup G_3,$ such that
\begin{align*}
&k \in N_1 ~\Leftrightarrow~ B_0-C_0+2gQ+kP \in \Gamma_P, \\
&k \in N_2 ~\Leftrightarrow~ B_0+kP \in \Gamma_P, \\
&k \in N_3 ~\Leftrightarrow~ B_0-C_0-2gQ+kP \in \Gamma_P.
\end{align*}
Then $\# N_i = \# G_i = g$, for $i=1,2,3.$ Moreover
\[
I_P(B_0,C_0+2gQ) = N_2 \cup G_3 ~~\text{and}~~ I_P(B_0-C_0+2gQ,-C_0+2gQ) = N_1 \cup G_2.
\]
\end{proposition}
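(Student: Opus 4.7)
The plan is to proceed in two stages: first establish the cardinalities $\#N_i = \#G_i = g$ by a telescoping Riemann--Roch count, then identify each of the two target sets $I_P(B_0, C_0+2gQ)$ and $I_P(B_0-C_0+2gQ, -C_0+2gQ)$ by splitting the index range $\{-2g,\ldots,4g-1\}$ (the only interval where membership can occur, by Lemma \ref{L:Ddegree}) into the three blocks $\{-2g,\ldots,-1\}$, $\{0,\ldots,2g-1\}$, and $\{2g,\ldots,4g-1\}$ and examining each block separately.

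For the cardinalities, I use that $\ell(D+kP) - \ell(D+(k-1)P) \in \{0,1\}$, with value $1$ exactly when $D+kP \in \Gamma_P$. Hence the number of $k \in \{m+1,\ldots,n\}$ with $D+kP \in \Gamma_P$ telescopes to $\ell(D+nP) - \ell(D+mP)$, which equals $g$ whenever $\deg(D+mP) = -1$ (so $\ell = 0$) and $\deg(D+nP) = 2g-1$ (so $\ell = g$ by Riemann--Roch, since $\deg(K-D-nP) = -1$). Applying this to $D = B_0-C_0+2gQ$ on $\{-2g,\ldots,-1\}$, to $D = B_0$ on $\{0,\ldots,2g-1\}$, and to $D = B_0-C_0-2gQ$ on $\{2g,\ldots,4g-1\}$, each interval meets the required boundary degrees and gives $\#N_i = g$; complementation gives $\#G_i = g$ for $i = 1,2,3$.

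For the identifications, unpack the definition $I_P(B,C) = \{i : B+iP \in \Gamma_P,\ B-C+iP \not\in \Gamma_P\}$. For $I_P(B_0, C_0+2gQ)$ the two conditions read $B_0+iP \in \Gamma_P$ and $B_0-C_0-2gQ+iP \not\in \Gamma_P$. On $\{-2g,\ldots,-1\}$ the first divisor has negative degree, so its condition fails and this block contributes nothing. On $\{0,\ldots,2g-1\}$ the second divisor has degree $i-2g < 0$, so its condition is automatic and the remaining requirement is $i \in N_2$. On $\{2g,\ldots,4g-1\}$ the first divisor has degree $\geq 2g$, hence lies in $\Gamma_P$ automatically, and the remaining requirement becomes $i \in G_3$. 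Combining these yields $I_P(B_0, C_0+2gQ) = N_2 \cup G_3$. An entirely parallel analysis for $I_P(B_0-C_0+2gQ, -C_0+2gQ)$, whose conditions are $B_0-C_0+2gQ+iP \in \Gamma_P$ and $B_0+iP \not\in \Gamma_P$, yields $N_1$ on the first block (first divisor has degree in $\{0,\ldots,2g-1\}$, second condition automatic), $G_2$ on the middle block (first condition automatic since degree $\geq 2g$, remaining constraint $i \in G_2$), and nothing on the last block (second condition fails automatically), producing $N_1 \cup G_2$.

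There is no genuine obstacle: the argument is controlled case analysis driven by the twin facts that a divisor class of degree $\geq 2g$ is automatically in $\Gamma_P$ while one of degree $<0$ is automatically outside it, so in each block only one of the two conditions in $I_P$ is nontrivial. The only care needed is to pair each block with the divisor whose degree lies in the transitional range $\{0,\ldots,2g-1\}$. A useful internal consistency check is that the disjoint union has total size $(\#N_2+\#G_3)+(\#N_1+\#G_2) = 4g$, matching $\#\Delta_P(B_0-C_0+2gQ, 4gQ) = \deg(4gQ) = 4g$ from Lemma \ref{L:DminD}.
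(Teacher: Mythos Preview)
Your proof is correct and follows essentially the same approach as the paper's own proof. The paper compresses the identical block-by-block case analysis into a $4\times 3$ table whose rows are the four combinations of the two $\Gamma_P$-membership conditions and whose columns are the three intervals; the dashes in the table encode exactly your ``automatic'' observations (negative degree forces $\notin\Gamma_P$, degree $\geq 2g$ forces $\in\Gamma_P$), and the nonempty entries read off $N_2\cup G_3$ and $N_1\cup G_2$ just as you do. Your explicit Riemann--Roch telescoping for $\#N_i=g$ spells out a step the paper leaves implicit in the table, but otherwise the arguments coincide.
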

\begin{proof}
\[
\begin{array}{cccc}
                                      &\{-2g, \ldots, -1\}  &\{0, \ldots, 2g-1\}  &\{2g, \ldots, 4g-1 \} \\[1ex]
\begin{array}{rl} \{ k :&B_0 + kP \in \Gamma_P \;\wedge \\        
                        &B_0-C_0-2gQ+kP \in \Gamma_P \}  \end{array}        &-     &-      &N_3   \\[1.5ex]
\begin{array}{rl} \{ k :&B_0 + kP \in \Gamma_P  \;\wedge \\       
                        &B_0-C_0-2gQ+kP \not \in \Gamma_P \} \end{array}      &-     &N_2    &G_3   \\[1.5ex]
\begin{array}{rl} \{ k :&B_0 + kP \not \in \Gamma_P \;\wedge \\  
                        &B_0-C_0+2gQ+kP \in \Gamma_P \} \end{array}    &N_1   &G_2    &-     \\[1.5ex]
\begin{array}{rl} \{ k :&B_0 + kP \not \in \Gamma_P \wedge \\  
                        &B_0-C_0+2gQ+kP \not \in \Gamma_P \} \end{array} &G_1   &-      &-     
\end{array}
\]
\end{proof}

\section{Discrepancies} \label{S:Discrepancies}

We continue the description of a delta set $\Delta_P(C)$ in terms of other known delta sets. The results in the
previous section show that differences between similar delta sets, such as $\Delta_P(C+E)$ and $\Delta_P(C)$, for
$E \in \Gamma_P$, can be described in terms of the delta set $\Delta_P(E).$ In this section, we refine the results
for the special case that $E = Q$ is a point different from $P$.  
 
\begin{lemma}
For distinct points $P$ and $Q$, $\Delta_P(Q)=\Delta_Q(P)$.  
\end{lemma}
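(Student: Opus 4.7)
The plan is to unpack both $\Delta_P(Q)$ and $\Delta_Q(P)$ in terms of the four vector spaces
$V = L(A)$, $V_P = L(A-P)$, $V_Q = L(A-Q)$, and $V_{PQ} = L(A-P-Q)$, and then to pivot on the identity $V_P \cap V_Q = V_{PQ}$. This identity holds precisely because $P \neq Q$: any function $f$ with $(f)+A \geq P$ and $(f)+A \geq Q$ automatically satisfies $(f)+A \geq P+Q$, while the reverse inclusion $V_{PQ} \subseteq V_P \cap V_Q$ is immediate.

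I would establish $\Delta_P(Q) \subseteq \Delta_Q(P)$ first. Suppose $A \in \Delta_P(Q)$, so $V_P \subsetneq V$ and $V_Q = V_{PQ}$. Using the intersection identity, the second equality becomes $V_Q \subseteq V_P$. If $V_Q$ were equal to $V$, the containment $V = V_Q \subseteq V_P$ would contradict $V_P \subsetneq V$; hence $V_Q \subsetneq V$, and since dropping a single point drops the dimension by at most one, $V_Q$ has codimension exactly one in $V$. Together with $V_Q \subseteq V_P$ and $V_P$ of codimension one, this forces $V_Q = V_P$. From $V_P = V_Q$ we read off $L(A) \neq L(A-Q)$, and from $V_{PQ} = V_P \cap V_Q = V_P$ we read off $L(A-P) = L(A-P-Q)$. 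Together these say $A \in \Delta_Q(P)$.

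The reverse inclusion $\Delta_Q(P) \subseteq \Delta_P(Q)$ follows by interchanging the roles of $P$ and $Q$ in the argument above; every step is symmetric in the two points once the intersection identity is in hand. I do not expect any substantive obstacle: the whole proof rests on this intersection identity together with the elementary fact that $\dim L(E) - \dim L(E-P) \in \{0,1\}$ for any divisor $E$ and rational point $P$.
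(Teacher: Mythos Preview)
Your proof is correct and follows essentially the same route as the paper: both arguments reduce to the interplay among the four spaces $L(A)$, $L(A-P)$, $L(A-Q)$, $L(A-P-Q)$, and the paper's unexplained middle equivalence is exactly what you justify via the intersection identity $L(A-P)\cap L(A-Q)=L(A-P-Q)$ together with the codimension-one constraints. Your write-up simply makes explicit the step the paper leaves to the reader.
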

\begin{proof}
\begin{multline*}
A \in \Delta_P(Q) ~\Leftrightarrow~ L(A) \neq L(A-P) \;\wedge\; L(A-Q) = L(A-Q-P) \\
~\Leftrightarrow~ L(A) \neq L(A-Q) \;\wedge\; L(A-P) = L(A-P-Q) ~\Leftrightarrow~ A \in \Delta_Q(P).
\end{multline*}
\end{proof}

Let $D(P,Q) = \Delta_P(Q) = \Delta_Q(P).$ We call a divisor $A \in D(P,Q)$ a discrepancy for the points $P$ and $Q$.

\begin{lemma} \label{L:discdeg}
A divisor $A \in D(P,Q)$ is of degree $0 \leq \deg A \leq 2g$. The cases $\deg A = 0$ and $\deg A = 2g$
correspond to unique divisor classes $A=0$ and $A=K+P+Q$, respectively. Furthermore,
\[
A \in D(P,Q) ~\Leftrightarrow~ K+P+Q-A \in D(P,Q).
\]
\end{lemma}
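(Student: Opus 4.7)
The plan is to derive everything from Lemma \ref{L:Ddegree} applied with $C = Q$, which has $\deg C = 1$, together with a short direct argument for the extremal degree cases.

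First, I specialize the general inequality $\min\{0, \deg C\} \leq \deg A \leq \max\{2g-1, \deg C + 2g-1\}$ to $C = Q$ to obtain $0 \leq \deg A \leq 2g$. The involution $A \mapsto K+C+P-A$ in Lemma \ref{L:Ddegree} specializes to $A \mapsto K+P+Q - A$, giving the stated duality $A \in D(P,Q) \Leftrightarrow K+P+Q-A \in D(P,Q)$ for free.

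Next I handle the boundary case $\deg A = 0$. Since $A \in \Delta_P(Q) \subset \Gamma_P$, we have $L(A) \neq L(A-P)$, which forces $\dim L(A) > \dim L(A-P) \geq 0$, so $L(A)$ contains a nonzero function $f$. Then $(f) + A$ is effective of degree $0$, hence $(f) + A = 0$, so $A \sim 0$ as a divisor class. Conversely, one checks directly that $A = 0$ does lie in $D(P,Q)$: $L(0) = \ff$ while $L(-P) = L(-Q) = L(-P-Q) = 0$, so the defining conditions hold.

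For the case $\deg A = 2g$, I simply apply the involution: if $A \in D(P,Q)$ has degree $2g$, then $K+P+Q-A \in D(P,Q)$ has degree $(2g-2) + 2 - 2g = 0$, so by the previous paragraph $K+P+Q-A \sim 0$, i.e.\ $A \sim K+P+Q$. This also shows uniqueness of the extremal classes. The argument is essentially bookkeeping from Lemma \ref{L:Ddegree}; the only substantive point is the degree-zero reduction $L(A) \neq 0 \wedge \deg A = 0 \Rightarrow A \sim 0$, which is standard.
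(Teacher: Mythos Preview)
Your proof is correct and follows the same approach as the paper, which simply cites Lemma~\ref{L:Ddegree}. You have expanded the argument, in particular supplying the explicit verification of the uniqueness claims at degrees $0$ and $2g$, which the paper leaves implicit in its one-line reference.
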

\begin{proof} 
Use Lemma \ref{L:Ddegree}.
\end{proof} 

The set $\Delta_P(B,Q)$ is defined as $\{ B+kP : k \in \ZZ \} \cap \Delta_P(Q)$. 
It follows from Lemma \ref{L:DminD} that $\Delta_P(B,Q)$ is a singleton set. For a divisor
$B$, and for a given choice of distinct points $P$ and $Q$, define
\[
B_Q = \Delta_P(B,Q), \qquad B_P = \Delta_Q(B,P).
\]

\begin{lemma} \label{L:AisAp}
\[
A \in D(P,Q) ~\Leftrightarrow~ A = A_Q ~\Leftrightarrow~ A = A_P. 
\]
\end{lemma}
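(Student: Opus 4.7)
The plan is to unpack the definitions and then appeal to Lemma \ref{L:DminD} to justify that $A_P$ and $A_Q$ are well-defined divisor classes. For distinct points $P$ and $Q$, the constant function $1$ lies in $L(Q) \setminus L(Q-P)$ (since $(1) = 0 \not\geq P - Q$ for $P \neq Q$), so $Q \in \Gamma_P$. Applying Lemma \ref{L:DminD} with $C = Q$ then yields
\[
\# \Delta_P(B, Q) = \deg Q = 1
\]
for every divisor $B$, so that $A_Q = \Delta_P(A, Q)$ is a single divisor class; symmetrically $A_P$ is a single class.

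For the forward direction of the first equivalence, suppose $A \in D(P,Q) = \Delta_P(Q)$. The class $A$ trivially lies in $\{A + iP : i \in \ZZ\}$ (take $i = 0$), so $A \in \{A + iP : i \in \ZZ\} \cap \Delta_P(Q) = \Delta_P(A, Q)$; since this set has exactly one element, $A_Q = A$. Conversely, if $A = A_Q$, then $A$ is by definition the unique element of $\Delta_P(A, Q) \subseteq \Delta_P(Q) = D(P, Q)$, whence $A \in D(P, Q)$.

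The equivalence $A \in D(P, Q) \Leftrightarrow A = A_P$ follows from exactly the same argument with the roles of $P$ and $Q$ interchanged, using the symmetry $\Delta_P(Q) = \Delta_Q(P)$ established in the preceding lemma so that $D(P,Q) = D(Q,P)$.

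There is no genuine obstacle here: the result is a direct bookkeeping consequence of the singleton property of $\Delta_P(B, Q)$. The only point requiring care is the notational convention that $B_Q$ denotes the (unique) element of a singleton set rather than the set itself, so that the symbolic equation $A = A_Q$ is interpreted as an equality of divisor classes; once that is understood, the two equivalences are immediate from the definitions.
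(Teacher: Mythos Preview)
Your proof is correct and follows essentially the same approach as the paper, which simply says ``Clear after writing $A_Q = \Delta_P(A,Q)$ and $A_P = \Delta_Q(A,P)$.'' You have merely spelled out in detail the singleton property of $\Delta_P(B,Q)$ (which the paper states just before the lemma) and the trivial set-membership argument that the paper leaves implicit.
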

\begin{proof}
Clear after writing $A_Q = \Delta_P(A,Q)$ and $A_P = \Delta_Q(A,P).$
\end{proof}

\begin{lemma} \label{L:singleton}
For distinct points $P$ and $Q$, and for a divisor $B$, 
$B_Q = \Delta_P(B,Q) = B+kP,$  
for $k$ minimal such that $L(B+kP) \neq L(B+kP-Q)$. For a general
$k$, $B+kP \in \Gamma_Q$ if and only if $B+kP \geq B_Q.$
\end{lemma}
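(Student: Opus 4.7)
The plan is to exploit the symmetry $\Delta_P(Q)=\Delta_Q(P)$ to reformulate membership in $\Delta_P(B,Q)$. By that symmetry together with the definition of $\Delta_Q(P)$, a divisor $B+iP$ lies in $\Delta_P(B,Q)$ precisely when $B+iP\in\Gamma_Q$ and $B+(i-1)P\notin\Gamma_Q$. Writing $T=\{i\in\ZZ : B+iP\in\Gamma_Q\}$, the singleton element of $\Delta_P(B,Q)$ (whose existence is recorded just above the statement, via Lemma~\ref{L:DminD}) is therefore $B+k_0P$ with $k_0=\min T$, and the second assertion of the lemma becomes the claim that $T=\{k\in\ZZ : k\geq k_0\}$. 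So the task reduces to showing that $T$ is an upward-closed, non-empty, bounded-below subset of $\ZZ$.

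The main ingredient is the monotonicity $B+iP\in\Gamma_Q \Rightarrow B+(i+1)P\in\Gamma_Q$. Given $f\in L(B+iP)\setminus L(B+iP-Q)$, the criterion $f\in L(B+iP-Q)$ reads $\ord_Q(f)+\ord_Q(B+iP)\geq 1$; since $P\neq Q$, $\ord_Q(iP)=\ord_Q((i+1)P)=0$, so the identical inequality controls $f\in L(B+(i+1)P-Q)$. Hence the same $f$, viewed inside $L(B+(i+1)P)\supseteq L(B+iP)$, certifies $B+(i+1)P\in\Gamma_Q$. Boundedness below of $T$ is immediate since for $i$ sufficiently negative both $L$-spaces vanish; and Riemann--Roch gives $l(B+iP)-l(B+iP-Q)=1$ for all sufficiently large $i$, so $T$ is non-empty. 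Together with monotonicity, $T=\{k\in\ZZ : k\geq k_0\}$ for a unique $k_0\in\ZZ$.

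Combining, $B_Q=B+k_0P$ is the unique element of $\Delta_P(B,Q)$, and for any $k\in\ZZ$ the relation $B+kP\geq B_Q$ simplifies to $k\geq k_0$, i.e.\ $k\in T$, i.e.\ $B+kP\in\Gamma_Q$. The only non-routine step is the short valuation computation establishing monotonicity; everything else is either a direct invocation of $\Delta_P(Q)=\Delta_Q(P)$ or a standard Riemann--Roch argument.
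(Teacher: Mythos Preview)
Your proof is correct and follows the same approach as the paper: both hinge on the symmetry $\Delta_P(Q)=\Delta_Q(P)$, which recasts $B+kP\in\Delta_P(B,Q)$ as the condition $B+kP\in\Gamma_Q$ and $B+(k-1)P\notin\Gamma_Q$. The paper's proof stops there, leaving the structure of $T=\{k:B+kP\in\Gamma_Q\}$ implicit in the already-established singleton fact (Lemma~\ref{L:DminD}), whereas you spell out monotonicity of $T$ directly via the valuation argument; this is a bit more than needed (the singleton fact alone forces $T=[k_0,\infty)$, since a set that is empty for $k\ll0$, full for $k\gg0$, and has exactly one entry point must be a half-line), but it makes your argument pleasantly self-contained.
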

\begin{proof}
\begin{align*}
B+kP \in \Delta_P(Q) &~\Leftrightarrow~ B+kP \in \Delta_Q(P) \\
                     &~\Leftrightarrow~ B+kP \in \Gamma_Q \;\wedge\; B+(k-1)P \not \in \Gamma_Q.
\end{align*}
\end{proof}
For distinct points $P$ and $Q$, and for a divisor $B$, let 
\[
D_B(P,Q) = \{ B+iP+jQ : i, j \in \ZZ \} \cap D(P,Q),
\]
and define functions $\sigma = \sigma_B, \tau = \tau_B : \ZZ \longrightarrow \ZZ$ such that
\begin{align*}
&B+iP+jQ \in \Gamma_P ~\Leftrightarrow~ j \geq \sigma(i), \\
&B+iP+jQ \in \Gamma_Q ~\Leftrightarrow~ i \geq \tau(j).
\end{align*}
With the lemma,
\begin{align*}
&(B+iP)_P = \Delta_Q(B+iP,P) = B+iP+\sigma(i)Q. \\
&(B+jQ)_Q = \Delta_P(B+jQ,Q) = B+\tau(j)P+jQ.
\end{align*}

\begin{theorem}
For a divisor $B$, 
\[
D_B(P,Q) = \{ B+iP+\sigma(i)Q : i \in \ZZ \} = \{ B+\tau(j)P+jQ : j \in \ZZ \}.
\]
In particular, the functions $\sigma = \sigma_B$ and $\tau = \tau_B$ are mutual inverses and describe permutations
of the integers. For a divisor $B$ of degree zero, and for $i \in \ZZ$,
$-i \leq \sigma(i), \tau(i) \leq 2g-i.$ For $m$ such that $mP \sim mQ$, the functions $i+\sigma(i), j+\tau(j)$ only depend
on $i,j$ modulo $m$. The functions $\sigma, \tau$ are determined by their images 
on a full set of representatives for $\ZZ /m \ZZ.$
\end{theorem}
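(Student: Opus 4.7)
The plan is to establish the two set equalities first, from which the remaining assertions follow directly. For the inclusion $\{B + iP + \sigma(i)Q : i \in \ZZ\} \subseteq D_B(P,Q)$, observe that by the definition of $\sigma$, the divisor $B + iP + \sigma(i)Q$ coincides with $(B+iP)_P = \Delta_Q(B+iP,P)$, which lies in $\Delta_Q(P) = D(P,Q)$. For the reverse inclusion, take $A \in D_B(P,Q)$ and write $A = B + iP + jQ$. Lemma \ref{L:AisAp} gives $A = A_P$, and Lemma \ref{L:singleton} (applied in the $Q$-direction) computes $A_P = A + k^*Q$ where $k^*$ is the smallest integer with $A + k^*Q \in \Gamma_P$. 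Since $A + kQ = B + iP + (j+k)Q$, minimality forces $j + k^* = \sigma(i)$, hence $A = B + iP + \sigma(i)Q$ and $j = \sigma(i)$. The second equality $D_B(P,Q) = \{B + \tau(j)P + jQ : j \in \ZZ\}$ follows by the symmetric argument in the variable $P$.

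The two descriptions identify $D_B(P,Q) \subset \ZZ \times \ZZ$ (via the $(i,j)$ coordinates relative to $B$) both as the graph of $\sigma$ and as $\{(\tau(j), j) : j \in \ZZ\}$; consequently both coordinate projections are bijections onto $\ZZ$, so $\sigma$ and $\tau$ are mutually inverse permutations of $\ZZ$. For the degree bounds in the case $\deg B = 0$, I would apply Lemma \ref{L:discdeg} to $A = B + iP + \sigma(i)Q \in D(P,Q)$: since $\deg A = i + \sigma(i)$ lies in $[0, 2g]$, rearrangement gives $-i \leq \sigma(i) \leq 2g - i$, and the bound for $\tau$ follows identically.

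For the periodicity, suppose $mP \sim mQ$. Then $B + (i+m)P + j'Q \sim B + iP + (j'+m)Q$ for every $j'$, and membership in $\Gamma_P$ depends only on the divisor class. Taking $j'$ to be the minimum value with the left-hand side in $\Gamma_P$, the right-hand side is then in $\Gamma_P$ with $j' + m$ minimal, so $j' + m = \sigma(i)$, whence $\sigma(i+m) = \sigma(i) - m$ and $(i+m) + \sigma(i+m) = i + \sigma(i)$; the analogous computation gives $(j+m) + \tau(j+m) = j + \tau(j)$. Since $\sigma(i)$ is recovered from the value $i + \sigma(i)$ by subtracting $i$, the function $\sigma$ is determined by its values on any full set of representatives for $\ZZ/m\ZZ$, and likewise for $\tau$. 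The only delicate point I anticipate is the bookkeeping in the backward containment of the first equality, where the shift $k^*$ coming out of Lemma \ref{L:singleton} must be tracked so that it exactly compensates $j$ against $\sigma(i)$; once the two parametrizations are established, everything else is formal.
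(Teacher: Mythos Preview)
Your proposal is correct and follows essentially the same approach as the paper: the set equalities come from Lemma~\ref{L:AisAp} combined with the identities $(B+iP)_P=B+iP+\sigma(i)Q$ established just before the theorem, the degree bound is Lemma~\ref{L:discdeg}, and the periodicity uses that membership in $D(P,Q)$ (equivalently in $\Gamma_P$) depends only on the divisor class so that $mP\sim mQ$ forces $\sigma(i+m)=\sigma(i)-m$. The paper's proof is much terser and leaves the first set equality implicit; your version spells out the bookkeeping, but there is no substantive difference in strategy.
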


\begin{proof}
For the second claim use Lemma \ref{L:discdeg}. Finally, $B+iP+jQ \in D(P,Q)$ only
depends on the divisor class of $B+iP+jQ$ and therefore
\[ 
B+iP+jQ \in D(P,Q) ~\Leftrightarrow~ B+(i+m)P+(j-m)Q \in D(P,Q), 
\]
so that $\sigma(i+m) = \sigma(i)-m.$
\end{proof}

The discrepancies $D_B(P,Q)$ serve as an index set for a common basis of the vector spaces $L(B+aP+bQ)$,
for $a, b \in \ZZ.$

\begin{theorem} \label{T:basis}
\[
\dim L(B+aP+bQ) = \# \{ B+iP+jQ \in D(P,Q) : i \leq a \wedge j \leq b \}.
\]
\end{theorem}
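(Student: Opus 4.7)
The plan is to prove the identity by telescoping the dimension $d(a,b) := \dim L(B+aP+bQ)$ along the $P$-direction. First I would observe that the inclusion $L(B+(a-1)P+bQ) \hookrightarrow L(B+aP+bQ)$ has codimension $0$ or $1$ (it is cut out by the single point $P$), and that the codimension is $1$ precisely when $B+aP+bQ \in \Gamma_P$. By the defining relation of $\sigma = \sigma_B$ stated just before the theorem, $B+aP+bQ \in \Gamma_P$ is equivalent to $b \geq \sigma(a)$. Hence a unit increase in $a$ contributes $1$ to $d(a,b)$ exactly on those $a$ with $\sigma(a) \leq b$.

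Next I would address the base case and the convergence. By Lemma~\ref{L:discdeg} every discrepancy has degree between $0$ and $2g$, so $\deg B + i + \sigma(i) \geq 0$, giving $\sigma(i) \geq -i - \deg B$. In particular $\sigma(i) \to +\infty$ as $i \to -\infty$, so only finitely many $i \leq a$ satisfy $\sigma(i) \leq b$. Moreover for $a_0$ sufficiently negative $d(a_0,b) = 0$ (the divisor has negative degree) and the condition $\sigma(i) \leq b$ fails for all $i \leq a_0$. The telescoping identity
\[
d(a,b) \;=\; \sum_{i \leq a}\bigl(d(i,b) - d(i-1,b)\bigr) \;=\; \#\{\, i \leq a : \sigma(i) \leq b\,\}
\]
then holds as an equality of honest finite sums.

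Finally I would combine this with the description $D_B(P,Q) = \{B + iP + \sigma(i)Q : i \in \ZZ\}$ from the preceding theorem. A discrepancy $B+iP+jQ \in D(P,Q)$ with $i \leq a$ and $j \leq b$ corresponds bijectively to an integer $i \leq a$ with $\sigma(i) = j \leq b$, so
\[
\#\{\, B+iP+jQ \in D(P,Q) : i \leq a \wedge j \leq b \,\} \;=\; \#\{\, i \leq a : \sigma(i) \leq b \,\},
\]
which matches the telescoped expression for $d(a,b)$. I do not expect a serious obstacle: the main effort is the convergence bookkeeping, handled by the universal degree bound on discrepancies, and the identification of the ``jump'' of $d(\cdot,b)$ with the membership condition $B+aP+bQ \in \Gamma_P$, which is immediate from the definition of $\sigma_B$.
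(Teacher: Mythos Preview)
Your proposal is correct and follows essentially the same approach as the paper: both telescope the dimension along the $P$-direction, identifying the unit jump at step $a$ with the condition $B+aP+bQ\in\Gamma_P$, equivalently $b\geq\sigma(a)$, and then match this against the parametrization $D_B(P,Q)=\{B+iP+\sigma(i)Q\}$. Your treatment of the base case and convergence via the degree bound on discrepancies is more explicit than the paper's terse ``use induction on $a$,'' but the underlying argument is the same.
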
 

\begin{proof}
$\dim L(B+aP+bQ) \neq \dim L(B+aP+bQ-P)$ if and only if $B+aP+bQ \in \Gamma_P$ if and only
if $B+aP+bQ \geq (B+aP)_P \in D_B(P,Q)$ if and only if there exists 
$B+iP+jP \in D_B(P,Q)$ with $i=a, j \leq b.$ Use induction on $a$ to complete the proof.
\end{proof}

\begin{theorem} \label{T:ApA} For given distinct points $P$ and $Q$, and for divisors $A$ and $C$,
\[
A \in \Delta_P(C+Q) ~\Leftrightarrow~ A_P \leq A \leq (A-C)_P + C.
\]
Moreover,
\begin{align*}
A_P = A \leq (A-C)_P + C  &~\Leftrightarrow~ A \in \Delta_P(C+Q) \;\wedge\; A-Q \not \in \Delta_P(C+Q). \\
A_P \leq A = (A-C)_P + C  &~\Leftrightarrow~ A \in \Delta_P(C+Q) \;\wedge\; A+Q \not \in \Delta_P(C+Q), \\
                          &~\Leftrightarrow~ A \in \Delta_P(C+Q) \;\wedge\; A \not \in \Delta_P(C). 
\end{align*}
\end{theorem}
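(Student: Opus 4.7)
The plan is to reduce everything to one upward-closure fact for $\Gamma_P$ under addition of $Q$. First I would observe that since $P \neq Q$, a witness $f \in L(E) \setminus L(E-P)$ automatically witnesses $E + Q \in \Gamma_P$ (valuations at $P$ are unchanged by adding $Q$); equivalently, for every divisor $E$ the set $\{k \in \ZZ : E + kQ \in \Gamma_P\}$ is an upper set of $\ZZ$. Writing $k(E)$ for its minimum, Lemma \ref{L:singleton} gives $E_P = E + k(E)Q$, and two clean equivalences fall out: $E_P \leq E \Leftrightarrow E \in \Gamma_P$, and $E_P \geq E + Q \Leftrightarrow E - Q \notin \Gamma_P$.

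Applying the first to $E = A$ yields $A_P \leq A \Leftrightarrow A \in \Gamma_P$, and applying the second to $E = A - C$ (then translating by $+C$) yields $(A-C)_P + C \geq A \Leftrightarrow A - C - Q \notin \Gamma_P$. Conjoining the two gives the main biconditional
\[
A_P \leq A \leq (A-C)_P + C \;\Leftrightarrow\; A \in \Gamma_P \,\wedge\, A - C - Q \notin \Gamma_P \;\Leftrightarrow\; A \in \Delta_P(C+Q).
\]

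For the two \emph{moreover} clauses I would read off $A_P = A \Leftrightarrow k(A) = 0 \Leftrightarrow A \in \Gamma_P \wedge A - Q \notin \Gamma_P$, and $A = (A-C)_P + C \Leftrightarrow k(A-C) = 0 \Leftrightarrow A - C \in \Gamma_P \wedge A - C - Q \notin \Gamma_P$. Under the standing hypothesis $A \in \Delta_P(C+Q)$, half of each conjunction is free, and the remaining condition must be matched to $A - Q \notin \Delta_P(C+Q)$, respectively to $A + Q \notin \Delta_P(C+Q)$ and to $A \notin \Delta_P(C)$. Here the upward-closure fact enters a second time: it forces $A - 2Q - C \notin \Gamma_P$ from $A - C - Q \notin \Gamma_P$, so $A - Q \in \Delta_P(C+Q)$ collapses to $A - Q \in \Gamma_P$, matching the first moreover; and it forces $A + Q \in \Gamma_P$ from $A \in \Gamma_P$, so $A + Q \in \Delta_P(C+Q)$ collapses to $A - C \notin \Gamma_P$, which is exactly $A \in \Delta_P(C)$, matching the second.

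The only obstacle is bookkeeping: the four conditions on whether $A$, $A \pm Q$, $A - C$, and $A - C - Q$ lie in $\Gamma_P$ shuttle back and forth between the equivalences, and it is easy to misapply the upward-closure in the wrong direction or to confuse the role of $k(A)$ versus $k(A-C)$. Once the $+Q$-stability of $\Gamma_P$ is cleanly isolated at the start, the rest is a short sequence of substitutions with no further geometric input.
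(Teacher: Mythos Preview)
Your argument is correct and is essentially the paper's proof unpacked: the main biconditional is exactly the content of Lemma~\ref{L:singleton} applied to $A$ and to $A-C$, and your handling of the two \emph{moreover} clauses via the $+Q$--upward-closure of $\Gamma_P$ is precisely Lemmas~\ref{L:diffA} (part one) and~\ref{L:diffB} (part two) specialized to $E=Q$, which the paper simply cites rather than re-derives. One slip to fix: the displayed equivalence ``$E_P \geq E + Q \Leftrightarrow E - Q \notin \Gamma_P$'' is off by one step in $Q$; it should read $E_P \geq E \Leftrightarrow E - Q \notin \Gamma_P$ (equivalently $k(E)\geq 0$), which is in fact the version you use two lines later when you translate by $+C$ to obtain $(A-C)_P + C \geq A$.
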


\begin{proof} With Lemma \ref{L:singleton}, $A \in \Gamma_P$ if and only if $A \geq A_P,$ and
$A-C-Q \not \in \Gamma_P$ if and only if $A-C \leq (A-C)_P.$ The last claims use Lemma \ref{L:diffB} (part two)
and Lemma \ref{L:diffA} (part one), respectively.
\end{proof}

Note that the divisors $A_P,$ $A$, and $(A-C)_P+C$, have the same multiplicities at any point other than $Q$.
Let $B_0$ and $C_0$ be divisors of degree zero, and let $\sigma = \sigma_{B_0}$ and $\sigma' = \sigma_{B_0-C_0}$.

\begin{corollary}
\begin{align*}
&B_0+kP+\ell Q \in \Delta_P(C_0+iP+jQ+Q). \\
&\quad ~\Leftrightarrow ~k+\sigma(k) \;\leq\; k+\ell \;\leq\; (k-i)+\sigma'(k-i) + i+j. \\
&\quad ~\Leftrightarrow ~\sigma(k) \;\leq\; \ell \;\leq\; \sigma'(k-i)+j. \\
\end{align*}
\end{corollary}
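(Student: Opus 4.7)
The plan is to apply Theorem~\ref{T:ApA} directly with $A = B_0 + kP + \ell Q$ and $C = C_0 + iP + jQ$, so that $C+Q$ matches the divisor appearing in the left-hand side. Theorem~\ref{T:ApA} asserts that $A \in \Delta_P(C+Q)$ is equivalent to the double divisor inequality $A_P \leq A \leq (A-C)_P + C$, so the entire task reduces to computing the two bounding divisors in closed form and reading off inequalities on $\ell$.

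The key observation is that the identity $(B_0 + kP)_P = B_0 + kP + \sigma(k)Q$, recorded just before Theorem~\ref{T:basis}, expresses $A_P$ explicitly since the divisor class of $A$ is $(B_0 + kP) + \ell Q$. This yields $A_P = B_0 + kP + \sigma(k)Q$. Applying the same identity with $\sigma$ replaced by $\sigma' = \sigma_{B_0 - C_0}$ to the divisor class $A - C = (B_0 - C_0) + (k-i)P + (\ell - j)Q$ gives $(A-C)_P = (B_0 - C_0) + (k-i)P + \sigma'(k-i)Q$. Adding $C = C_0 + iP + jQ$ back yields $(A-C)_P + C = B_0 + kP + (\sigma'(k-i) + j)Q$.

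Since the three divisors $A_P$, $A$, $(A-C)_P + C$ agree at every point other than $Q$ (as noted immediately after Theorem~\ref{T:ApA}), the double inequality reduces to an inequality on the coefficient of $Q$, giving $\sigma(k) \leq \ell \leq \sigma'(k-i) + j$. This is the second equivalence of the corollary; the first equivalence follows by adding $k$ to every term. There is no real obstacle here, as the argument is essentially a direct substitution into Theorem~\ref{T:ApA}; the only bookkeeping point to note is that $\sigma$ and $\sigma'$ are defined with respect to different base classes $B_0$ and $B_0 - C_0$, a normalization that is forced by the translation $A \mapsto A - C$.
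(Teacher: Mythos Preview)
Your proof is correct and follows the same approach as the paper: apply Theorem~\ref{T:ApA} with $A=B_0+kP+\ell Q$ and $C=C_0+iP+jQ$, compute $A_P$ and $(A-C)_P$ via the defining identities for $\sigma$ and $\sigma'$, and then compare the multiplicities at $Q$ (equivalently, the degrees) of $A_P$, $A$, and $(A-C)_P+C$. The paper's proof is more terse but identical in substance.
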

\begin{proof} For $A = B_0+kP+\ell Q,$ $A_P = B_0+kP+\sigma(k) Q$. For $C=C_0+iP+jQ$, $(A-C)_P = B_0-C_0+(k-i)P+\sigma'(k-i)Q.$
Now use the theorem, and compare either the degrees of the divisors $A_P$, $A$, and $(A-C)_P+C$, or their multiplicities at
$Q$. 
\end{proof}

Let $\tau$ and $\tau'$ be the inverse functions for $\sigma=\sigma_{B_0}$ and $\sigma'=\sigma_{B_0-C_0}$, respectively.
Let
\begin{align*}
&d_P(k) = \deg (B_0+kP)_P = k+\sigma(k), \\
&d_Q(\ell) = \deg (B_0+\ell Q)_Q = \tau(\ell)+\ell. \\[1ex]
&d'_P(k-i) = \deg (B_0-C_0+(k-i)P)_P = k-i+\sigma'(k-i), \\
&d'_Q(\ell-j) = \deg (B_0-C_0+(\ell-j)Q)_Q = \tau'(\ell-j)+\ell-j. 
\end{align*}
For $mP \sim mQ,$ the functions $d_P, d_Q$ and $d'_P, d'_Q$ are defined modulo $m$.

\begin{proposition} \label{P:tables}
Let $A=B_0+kP+\ell Q$, and let $C=C_0+iP+jQ+Q$.
\begin{align*} 
&A_P = A \leq (A-C)_P + C \\
&\quad ~\Leftrightarrow~  k = k^+ = \tau(\ell) \;\wedge\; \ell-j \leq \sigma'(k^+-i). \\
&\quad ~\Leftrightarrow~  k = k^+ = d_Q(\ell) - \ell \;\wedge\; d_Q(\ell) \leq d'_P(k^+-i) + i+j. \\[1ex]
&A_P \leq A = (A-C)_P + C \\
&\quad ~\Leftrightarrow~  k = k^- = \tau'(\ell-j)+i \;\wedge\; \sigma(k^-) \leq \ell. \\
&\quad ~\Leftrightarrow~  k = k^- = d'_Q(\ell-j)-\ell+i+j \;\wedge\; d_P(k^-) \leq d'_Q(\ell-j)+i+j.                               
\end{align*}
\end{proposition}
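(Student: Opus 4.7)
The plan is to execute a direct coordinate computation. With $A$, $C$, $A_P$, and $(A-C)_P + C$ all written in the $(P,Q)$-coordinates controlled by $\sigma, \tau, \sigma', \tau'$, both sides of each claimed equivalence translate to one-variable conditions, and the result follows by matching them against the two boundary cases of Theorem \ref{T:ApA}.

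First I would compute $A_P$ and $(A-C)_P + C$ explicitly. Since adding multiples of $Q$ to a divisor does not change which discrepancy class it lands in under $\cdot_P$, the definition of $\sigma = \sigma_{B_0}$ gives $A_P = B_0 + kP + \sigma(k)Q$. Writing $A - C = (B_0 - C_0) + (k-i)P + (\ell-j-1)Q$ and applying $\sigma' = \sigma_{B_0 - C_0}$ yields $(A-C)_P = (B_0-C_0) + (k-i)P + \sigma'(k-i)Q$, so that
\[
(A-C)_P + C \;=\; B_0 + kP + \bigl(\sigma'(k-i) + j + 1\bigr)Q.
\]
The crucial observation is that $A_P$, $A$, and $(A-C)_P + C$ all share the same $P$-coefficient $k$, so every equality or inequality between them reduces to a one-variable comparison in the $Q$-coefficient.

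Second, I would carry out the reductions. The equation $A_P = A$ is just $\sigma(k) = \ell$, which by inversion gives $k = \tau(\ell) = k^+$; the inequality $A \leq (A-C)_P + C$ then rearranges to $\ell - j \leq \sigma'(k^+ - i)$ after absorbing the $+Q$ in $C$ into the $j$-indexing. For the dual equivalence, the equation $A = (A-C)_P + C$ forces $\sigma'(k-i) = \ell - j$, so $k = \tau'(\ell-j) + i = k^-$, and $A_P \leq A$ becomes $\sigma(k^-) \leq \ell$. To pass to the $d$-form I would substitute $d_Q(\ell) = \tau(\ell) + \ell$, $d'_P(m) = m + \sigma'(m)$, and their primed analogues: under $k = k^+$ the relation $k + \ell = d_Q(\ell)$ gives $k^+ = d_Q(\ell) - \ell$, and adding $k + \ell$ to both sides of the inequality turns $\ell - j \leq \sigma'(k^+ - i)$ into $d_Q(\ell) \leq d'_P(k^+ - i) + i + j$; the symmetric substitution $k^- = \tau'(\ell-j) + i = d'_Q(\ell-j) - \ell + i + j$ then handles the second equivalence.

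The main obstacle is purely bookkeeping: aligning the $+Q$ appearing in the statement of $C$ with the $\Delta_P(C+Q)$ formulation of Theorem \ref{T:ApA} so that the integer indexing in $j$ comes out exactly as stated in the proposition. Once the index conventions are pinned down, every step is a direct substitution and the equivalences drop out.
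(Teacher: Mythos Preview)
Your approach is correct and coincides with the paper's: the paper gives no separate proof for this proposition, treating it as the two boundary cases of the Corollary immediately preceding it, rewritten through the inverse permutations $\tau,\tau'$ and the degree functions $d_P,d_Q,d'_P,d'_Q$. The coordinate computation you outline is exactly how that Corollary is proved.

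On the bookkeeping you flag at the end: the clean resolution is to observe that the displayed conditions $A_P = A \leq (A-C)_P + C$ and $A_P \leq A = (A-C)_P + C$ in the proposition are quoted verbatim from Theorem~\ref{T:ApA}, where the symbol $C$ stands for the divisor whose delta set $\Delta_P(C+Q)$ is under discussion. The proposition's line ``let $C = C_0+iP+jQ+Q$'' is naming that delta set (in parallel with the Corollary), so the $C$ appearing \emph{inside} the inequalities is $C_0+iP+jQ$. With that reading one gets $(A-C)_P + C = B_0 + kP + (\sigma'(k-i)+j)Q$ directly, and the claimed equivalences $\ell - j \leq \sigma'(k^+-i)$ and $k^- = \tau'(\ell-j)+i$ drop out with no shift. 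Your explicit computation instead takes $C = C_0+iP+(j+1)Q$ literally in $(A-C)_P+C$, which yields $\ell-j-1 \leq \sigma'(k-i)$ and $k = \tau'(\ell-j-1)+i$; the phrase ``absorbing the $+Q$ into the $j$-indexing'' does not repair that off-by-one. The fix is not an index shift but disentangling the two uses of the letter $C$.
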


We use the proposition to create tables for each of the three equivalences in Theorem \ref{T:ApA}. 
The tables $N$ and $K$ are used to
compute the size of a delta set or to construct a delta set, respectively (Example \ref{E:herm}). 
The tables $N^+$ and $N^-$ are used in the optimization of the order bound (Example \ref{E:suz1}). 
The tables $K^+$ and $K^-$ provide more information that can be used for further improvements 
with the ABZ bound (Example \ref{E:suz2}). In all cases, let $A=B_0+kP+\ell Q$, and let $C=C_0+iP+jQ+Q$.
\[
A_P \leq A = (A-C)_P + C  ~\Leftrightarrow~ A \in \Delta_P(C+Q) \;\wedge\; A \not \in \Delta_P(C).
\]
\[
K_\ell(i,j) = \tau'(\ell-j)+i , \qquad N_\ell(i,j) = \begin{cases} 1 &~~\text{if $\sigma(k) \leq \ell$} \\ 0 &~~\text{if $\sigma(k) > \ell$} \end{cases}
\]
The table $N_\ell(i,j)$ indicates whether there exists $A \in \Delta_P(B_0+\ell Q,C_0+iP+jQ+Q) \backslash \Delta_P(B_0+\ell Q,C_0+iP+jQ)$.
In the affirmative case (N=1), the table $K_\ell(i,j)$ gives the unique value of $k$ such that $A=B_0+kP+\ell Q.$ The table $N$ is
sufficient for computing the size of a delta set, the table $K$ moreover provides the elements of a delta set. 
\[
A_P = A \leq (A-C)_P + C  ~\Leftrightarrow~ A \in \Delta_P(C+Q) \;\wedge\; A-Q \not \in \Delta_P(C+Q). 
\]
\[
K^+_i(j,\ell) = \tau(\ell) , \qquad N^+_i(j,\ell) = \begin{cases} 1 &~~\text{if $\sigma'(k-i) \geq \ell-j$} \\ 0 &~~\text{if $\sigma'(k-i) < \ell-j$} \end{cases}
\]
The table $N^+_i(j,\ell)$ indicates whether there exists $A \in \Delta_P(B_0+\ell Q,C_0+iP+jQ+Q)$ with 
$A-Q \not \in \Delta_P(B_0+\ell Q-Q,C_0+iP+jQ)$.
In the affirmative case (N=1), the table $K^+_i(j,\ell)$ gives the unique value of $k$ such that $A=B_0+kP+\ell Q.$ The table $N^+$ is
sufficient for comparing the sizes of two delta sets, the table $K^+$ moreover provides the elements for the difference in one
direction. 
\[
A_P \leq A = (A-C)_P + C  ~\Leftrightarrow~ A \in \Delta_P(C+Q) \;\wedge\; A+Q \not \in \Delta_P(C+Q).
\] 
\[
K^-_i(j,\ell) = \tau'(\ell-j)+i , \qquad N^-_i(j,\ell) = \begin{cases} 1 &~~\text{if $\sigma(k) \leq \ell$} \\ 0 &~~\text{if $\sigma(k) > \ell$} \end{cases}
\]
The table $N^-_i(j,\ell)$ indicates whether there exists $A \in \Delta_P(B_0+\ell Q,C_0+iP+jQ+Q)$ with $A+Q \neq \Delta_P(B_0+\ell Q+Q,C_0+iP+jQ+Q)$. 
In the affirmative case (N=1), the table $K^-_i(j,\ell)$ gives the unique value of $k$ such that $A=B_0+kP+\ell Q.$ 
The table $N$ is sufficient for comparing the sizes of two delta sets, the table $K$ moreover provides the elements in the difference. 

\section{Hermitian curves} \label{S:Hermitian}
 
Let $X$ be the Hermitian curve over $\mathbb{F}_{q^2}$ defined by the equation $y^q+y=x^{q+1}$. 
The curve has $q^3+1$ rational points and genus $g = q(q-1)/2$. Let $P$ and $Q$ be two distinct
rational points. We will give a description of the set 
\[
D_0(P,Q) = D(P,Q) \cap \{ iP+jQ : i,j \in \ZZ \}.
\]
We use this description to determine lower bounds for $\gamma_P(C),$ 
for $C \in \{ iP+jQ : i,j \in \ZZ \}.$ 
The only property of the two rational points that we use is that lines intersect the pair $(P,Q)$
with one of the multiplicities
\[
\begin{array}{rrr} (0,0) &(0,1) &(0,q+1) \\  (1,0) &\underline{(1,1)} \\ (q+1,0) \end{array}
\]
The curve is a smooth plane curve and if $H$ is the intersection divisor of a line then $K=(q-2)H$ 
represents the canonical class. We have $H \sim (q+1)P \sim (q+1)Q$ and $m(P-Q)$ is principal for
$m=q+1.$

\begin{proposition} \label{P:discherm}
The $m$ inequivalent divisor classes in $D_0(P,Q)$ are represented by the divisors
\[
dH - d P - d Q,  \quad \text{for $d=0,1,\ldots,q.$}
\]
\end{proposition}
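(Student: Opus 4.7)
The plan is to exhibit $q+1$ distinct classes in $D_0(P,Q)$ and match this against the structural upper bound $|D_0(P,Q)| \leq q+1$. The candidates are $A_d := dH - dP - dQ$ for $d = 0, 1, \ldots, q$; their degrees $\deg A_d = d(q-1)$ are $q+1$ distinct values, so the classes are pairwise distinct. They are of the form $iP + jQ$ since $H \sim (q+1)P$ gives $A_d \sim (dq)P - dQ$. For the upper bound I would apply the theorem preceding Theorem~\ref{T:basis}: since $H \sim (q+1)P \sim (q+1)Q$, one may take $m = q+1$, so $D_0(P,Q)$ contains at most $q+1$ classes.

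It remains to show $A_d \in D(P,Q) = \Delta_P(Q)$, i.e.\ $A_d \in \Gamma_P$ and $A_d - Q \notin \Gamma_P$. By Riemann--Roch, $E \in \Gamma_P$ iff $K - E + P \notin \Gamma_P$; applied to $E = A_d - Q$, the second condition is equivalent to $K + P + Q - A_d \in \Gamma_P$. A direct computation gives $K + P + Q - A_d = (q-2-d)H + (d+1)P + (d+1)Q$, which differs from $A_{q-d}$ by $-2H + (q+1)P + (q+1)Q \sim 0$. Hence the second condition for $A_d$ is the first condition for $A_{q-d}$, and since $\{d : 0 \le d \le q\}$ is stable under $d \mapsto q-d$, it suffices to establish $A_d \in \Gamma_P$ for all $d \in \{0, \ldots, q\}$.

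For this, take $d \geq 1$ (the case $d = 0$ is immediate). Let $\ell_{PQ}$ be a linear form vanishing on the chord through $P$ and $Q$, with divisor $H_{PQ} = P + Q + \Sigma$ on $X$, $\deg \Sigma = q - 1$. The crucial geometric fact is that $\Sigma$ avoids both $P$ and $Q$, i.e.\ $L_{PQ}$ is not tangent to $X$ at either endpoint. I would verify this by computing the tangent line at any rational point $(a,b)$ of $X$: since $F = y^q + y - x^{q+1}$ has $F_y = 1$ (as $q \equiv 0 \pmod{p}$) and $F_x = -x^q$ (as $q+1 \equiv 1 \pmod{p}$), the tangent is $y = a^q x + (b - a^{q+1})$. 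Substituting into $F = 0$, expanding the $q$-th power using $a^{q^2} = a$ in $\mathbb{F}_{q^2}$ and $b^q + b = a^{q+1}$, the equation reduces to $(x-a)(x^q - a^q) = (x-a)^{q+1} = 0$, so the tangent at $(a,b)$ meets $X$ only at $(a,b)$. In particular $L_{PQ}$ is non-tangent at $P$ and $Q$ whenever $P \neq Q$, so $v_P(H_{PQ}) = v_Q(H_{PQ}) = 1$. Then for any line $\ell_0$ disjoint from $P$ and $Q$, the function $f_d := (\ell_{PQ}/\ell_0)^d$ has divisor $d(H_{PQ} - H) = dP + dQ + d\Sigma - dH$, yielding $(f_d) + A_d = d\Sigma \geq 0$ and $(f_d) + A_d - P = -P + d\Sigma \not\geq 0$; hence $f_d \in L(A_d) \setminus L(A_d - P)$, so $A_d \in \Gamma_P$.

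The main obstacle is the tangent-line calculation reducing to $(x-a)^{q+1} = 0$, which is the only step that really uses the specific algebra of the Hermitian equation and the arithmetic of $\mathbb{F}_{q^2}$; everything else reduces to Riemann--Roch bookkeeping and the structural results established earlier in the paper.
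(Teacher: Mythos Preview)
Your argument is correct and follows the paper's route: show $A_d \in \Gamma_P$, use the Riemann--Roch symmetry $A \mapsto K+P+Q-A$ together with $K+P+Q \sim q(H-P-Q)$ to reduce the second membership condition to the first for $A_{q-d}$, and match the count against $m=q+1$. The paper is slightly terser---it reads $H-P-Q \in \Gamma_P$ directly off the intersection-multiplicity table stated just before the proposition (the underlined $(1,1)$ entry) and then invokes the semigroup property $A_d = d(H-P-Q) \in \Gamma_P$, whereas you verify the non-tangency of the chord by an explicit tangent computation; note that your computation treats only affine points $(a,b)$, so if one of $P,Q$ is the unique point at infinity you still owe a one-line check (e.g.\ the vertical line $x=a$ meets the affine curve in $q$ simple points, hence meets $P_\infty$ simply as well).
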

\begin{proof}
Since $m=q+1$ is minimal such that $mP \sim mQ$, the divisors are inequivalent.
As multiples of $H-P-Q \in \Gamma_P$, each of the divisors $dH - d P - d Q \in \Gamma_P,$
for $d=0,1,\ldots,q.$ A divisor $A \in \Gamma_P$ is a discrepancy if and only if $K+P+Q-A \in \Gamma_P.$
Now use $K+P+Q=(q-2)H+P+Q=q(H-P-Q).$ 
\end{proof} 
 
The function $y$ has divisor $y=(q+1)(P_0-P_\infty)$, where $P_0 = (0,0)$ and $P_\infty=(0:1:0)$.
Moreover $L(H-P_0-P_\infty) = \langle x \rangle.$ 

\begin{corollary}
The ring $O$ of functions that are regular outside $P_0$ and $P_\infty$ has a basis
$\langle\; x^i y^j \,|\, 0 \leq i \leq q, j \in \ZZ \;\rangle$ as vector space over $\mathbb{F}_{q^2}$.
\end{corollary}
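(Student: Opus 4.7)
The plan is to establish linear independence via distinct valuations at $P_\infty$ and spanning via the algebra structure of the affine coordinate ring $\mathbb{F}_{q^2}[x, y]/(y^q + y - x^{q+1})$. First I would record that $x^i y^j \in O$ for $0 \leq i \leq q$ and $j \in \ZZ$: the given divisor $(y) = (q+1)(P_0 - P_\infty)$ yields $y, y^{-1} \in O$, and $L(H - P_0 - P_\infty) = \langle x \rangle$ with $H \sim (q+1)P_\infty$, together with the relation $y^q + y = x^{q+1}$, gives $v_{P_\infty}(x) = -q$ and $v_{P_0}(x) = 1$, so $x \in O$ as well.

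For linear independence I would observe that $v_{P_\infty}(x^i y^j) = -iq - j(q+1)$, and since $\gcd(q, q+1) = 1$ while $\{0, 1, \ldots, q\}$ is a complete system of residues modulo $q+1$, these values are pairwise distinct as $(i, j)$ varies over $\{0, \ldots, q\} \times \ZZ$. In any finite relation $\sum c_{i,j}\, x^i y^j = 0$, the term with smallest valuation at $P_\infty$ is unique, so its coefficient must vanish, and inductively all $c_{i,j} = 0$.

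For spanning I would reduce to the affine coordinate ring. If $f \in O$ has a pole of order at most $e$ at $P_0$, then for $k = \lceil e/(q+1) \rceil$ the function $fy^k$ is regular at $P_0$ (since $v_{P_0}(y) = q+1$) and at every point other than $P_\infty$, so $fy^k$ lies in $\mathbb{F}_{q^2}[x, y]/(y^q + y - x^{q+1})$. Because the defining relation is monic of degree $q+1$ in $x$ with coefficients in $\mathbb{F}_{q^2}[y]$, the affine ring is free over $\mathbb{F}_{q^2}[y]$ with basis $\{1, x, \ldots, x^q\}$; writing $fy^k = \sum_{i=0}^{q} g_i(y)\, x^i$ and multiplying back by $y^{-k}$ exhibits $f$ as a finite $\mathbb{F}_{q^2}$-combination of the proposed set. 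The only delicate point is this reduction, but it becomes routine once $v_{P_0}(y) = q+1$ is used to absorb the pole at $P_0$.
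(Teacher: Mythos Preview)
Your argument is correct. The valuation computation for linear independence is clean, and the spanning step via the affine coordinate ring is sound; the one implicit point is that the ring of functions regular outside $P_\infty$ equals $\mathbb{F}_{q^2}[x,y]/(y^q+y-x^{q+1})$, which holds because the Hermitian curve is smooth (as the paper notes) and has $P_\infty$ as its unique point at infinity.

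The paper's own proof is different in spirit: it simply invokes Theorem~\ref{T:basis}, which says that the discrepancies $D_B(P,Q)$ index a common basis of all the spaces $L(B+aP+bQ)$. Combined with Proposition~\ref{P:discherm}, the discrepancies in $D_0(P_0,P_\infty)$ are the classes of $d(H-P_0-P_\infty)$ for $d=0,\ldots,q$, together with their translates by the principal class $(q+1)(P_0-P_\infty)$. Since $x$ generates $L(H-P_0-P_\infty)$ and $y$ has divisor $(q+1)(P_0-P_\infty)$, the function attached to the discrepancy $d(H-P_0-P_\infty) + j(q+1)(P_\infty-P_0)$ is $x^d y^j$, and taking the union over all $a,b$ gives the basis for $O$. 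So the paper's route is an application of the general discrepancy machinery developed earlier, while yours is a self-contained argument using only the explicit valuations of $x$ and $y$ and the module structure of the affine ring over $\mathbb{F}_{q^2}[y]$. Your approach is more elementary and portable; the paper's approach has the advantage of exhibiting the corollary as a specialization of the structural Theorem~\ref{T:basis}, which is the real point being illustrated here.
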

\begin{proof}
Theorem \ref{T:basis}.
\end{proof} 

\begin{lemma} \label{L:hermP}
Let $C = dH-aP-bQ,$ for $0 \leq a \leq q, 0 \leq b \leq q+1.$ 
\[
C \in \Gamma_P ~\Leftrightarrow~ d > a ~\text{or}~ d = a \geq b. 
\]
\end{lemma}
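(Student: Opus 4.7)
The plan is to work with the explicit basis $\{x^iy^j : 0\le i\le q,\ j\in\ZZ\}$ for the ring $O$ of functions regular outside $\{P_0,P_\infty\}$ supplied by the preceding Corollary. I take $P=P_\infty$ and $Q=P_0$, so that the valuations on basis elements are
\[
v_P(x^iy^j) \;=\; -qi-(q+1)j,\qquad v_Q(x^iy^j) \;=\; i+(q+1)j.
\]
Because $\gcd(q,q+1)=1$, both $v_P$ and $v_Q$ take distinct values on the strip $0\le i\le q$: a solution to $-qi_1-(q+1)j_1=-qi_2-(q+1)j_2$ forces $(q+1)\mid(i_1-i_2)$, hence $i_1=i_2$ and $j_1=j_2$. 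Consequently in any $\mathbb{F}_{q^2}$-linear combination the leading terms at $P$ and $Q$ cannot cancel, and a basis of $L(C)$ is obtained from those $x^iy^j$ that individually satisfy the two valuation constraints imposed by $C$.

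Using $H\sim(q+1)P$, I rewrite $C\sim(d(q+1)-a)P-bQ$, so $x^iy^j\in L(C)$ iff
\[
qi+(q+1)j\;\le\;d(q+1)-a \qquad\text{and}\qquad i+(q+1)j\;\ge\;b.
\]
The condition $C\in\Gamma_P$ is then equivalent to the existence of a basis element for which the $P$-constraint is saturated, $qi+(q+1)j=d(q+1)-a$, and the $Q$-constraint still holds. Reducing the equation modulo $q+1$ (using $q\equiv-1$) gives $i\equiv a\pmod{q+1}$, and combined with $0\le i\le q$ and $0\le a\le q$ this selects the unique candidate $(i,j)=(a,d-a)$. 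So $C\in\Gamma_P$ iff the surviving $Q$-condition
\[
a+(q+1)(d-a) \;=\; (q+1)d-qa \;\ge\; b
\]
holds.

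A short three-way case analysis then matches the stated dichotomy: if $d\ge a+1$, then $(q+1)d-qa\ge a+q+1\ge q+1\ge b$, so the inequality holds; if $d=a$, it collapses to $a\ge b$; if $d\le a-1$, then $(q+1)d-qa\le a-q-1\le-1<b$, so it fails. Combining, $C\in\Gamma_P$ iff $d>a$ or $d=a\ge b$.

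The only point needing care is the termwise reduction of $f\in L(C)$ to per-basis-element inequalities, which is what forces me to invoke the coprimality of $q$ and $q+1$. The boundary value $b=q+1$ is handled automatically: it rules out the $d=a$ branch, since $a\ge b=q+1$ contradicts $a\le q$, leaving only $d>a$ as a possibility in that case.
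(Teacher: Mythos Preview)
Your proof is correct, but it takes a different route from the paper. The paper argues intrinsically via the discrepancy machinery: it invokes Lemma~\ref{L:singleton}, which says that $C\in\Gamma_P$ iff $C$ dominates the unique discrepancy on its $Q$-line, and then uses Proposition~\ref{P:discherm} to identify that discrepancy as $aH-aP-aQ$. With $H\sim(q+1)Q$, the comparison $dH-aP-bQ\geq aH-aP-aQ$ reduces immediately to the same inequality $(q+1)d-qa\geq b$ that you obtain, and the case split is identical.

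Your argument is more concrete: you work in coordinates, use the explicit monomial basis of $O$, and exploit the fact that the $v_P$- and $v_Q$-valuations separate basis elements to reduce membership in $L(C)$ to termwise inequalities. This has the advantage of producing an explicit witness---the function $x^{a}y^{d-a}$---for $C\in\Gamma_P$ when it holds, and it needs only the basis Corollary rather than Lemma~\ref{L:singleton}. The paper's approach, by contrast, is coordinate-free and transfers verbatim to any curve once the discrepancy set $D_0(P,Q)$ is known, which is why the same template reappears for the Suzuki curves in Section~\ref{S:Suzuki}.
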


\begin{proof} 
We use Lemma \ref{L:singleton} together with Proposition \ref{P:discherm}.
We may assume $H=(q+1)Q.$ Then, $dH-aP-bQ \in \Gamma_P$ if and only if 
$dH-aP-bQ \geq aH-aP-aQ$ if and only if $d>a$ or $d = a \geq b$.
\end{proof}
         
\begin{proposition} \label{P:short}
Let $C = dH-aP-bQ,$ for $0 \leq a, b \leq q.$ The set
$\Delta_P(-C) = \{ A \in \Gamma_P : A+C \not \in \Gamma_P \}$ contains
the following elements
\begin{align*}
&(q-1-d-r)H-(q-a)P, &\text{for $d \leq d+r < a.$} \\
&sH, &\text{for $d \leq d+s < a.$} \\
&sH-(d+s-a)P &\text{for $d \leq a \leq d+s < b.$} 
\end{align*}
\end{proposition}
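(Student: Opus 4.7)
The plan is to verify, for each of the three listed families of divisors $A$, that both $A \in \Gamma_P$ and $A+C \not\in \Gamma_P$, which is exactly the defining condition for $A \in \Delta_P(-C)$. The only tool needed is Lemma \ref{L:hermP}: a divisor written as $d'H - a'P - b'Q$ with $0 \leq a' \leq q$ and $0 \leq b' \leq q+1$ lies in $\Gamma_P$ iff $d' > a'$, or $d' = a'$ with $a' \geq b'$.

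For family~(1), writing $A = (q-1-d-r)H - (q-a)P$ gives $a' = q-a$ and $b' = 0$, both in the allowed range. The condition $d+r < a$ forces $q-1-d-r \geq q-a$, so $d' \geq a'$ and $b' = 0 \leq a'$, yielding $A \in \Gamma_P$. Then $A+C = (q-1-r)H - qP - bQ$ has $a' = q$ and $0 \leq b' = b \leq q$, and since $q-1-r < q$, we fail both $d' > a'$ and $d' = a'$, so $A+C \not\in \Gamma_P$. For family~(2), $A = sH$ with $s \geq 0$ is trivially in $\Gamma_P$, and $A+C = (d+s)H - aP - bQ$ fails to be in $\Gamma_P$ because $d+s < a$. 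For family~(3), $A = sH - (d+s-a)P$ has $a' = d+s-a$, which lies in $[0,q-1]$ by the hypothesis $d \leq a \leq d+s < b \leq q$; one checks $s = d' \geq a' = d+s-a$ since $a \geq d$, with $b' = 0 \leq a'$, so $A \in \Gamma_P$. Finally $A+C = (d+s)H - (d+s)P - bQ$ has $d' = a' = d+s$ but $b' = b > d+s = a'$, so $A+C \not\in \Gamma_P$.

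The routine part is the arithmetic of writing each $A$ and $A+C$ in the canonical form $d'H - a'P - b'Q$; the only genuine obstacle is the bookkeeping needed to confirm that every representation stays inside the box $0 \leq a' \leq q$, $0 \leq b' \leq q+1$ required by Lemma \ref{L:hermP}. The first family is the tightest case, since the $P$-coefficient of $A+C$ hits exactly $q$; the upper bounds $a, b \leq q$ in the proposition's hypothesis are precisely what is needed to keep this admissible. Once each representation is checked to be valid, the membership tests reduce to the chain of inequalities already encoded in the hypotheses $d \leq d+r < a$, $d \leq d+s < a$, and $d \leq a \leq d+s < b$, so no further curve-specific input is required beyond the intersection data used to prove Proposition~\ref{P:discherm} and Lemma~\ref{L:hermP}.
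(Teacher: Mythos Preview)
Your proof is correct and follows essentially the same approach as the paper's: both verify the two defining conditions $A \in \Gamma_P$ and $A+C \not\in \Gamma_P$ for each of the three families by writing $A$ and $A+C$ in the form $d'H-a'P-b'Q$ and invoking Lemma~\ref{L:hermP}. Your version is in fact slightly more careful than the paper's, since you explicitly check that the coefficients $a',b'$ land in the range $0 \leq a' \leq q$, $0 \leq b' \leq q+1$ required to apply the lemma, whereas the paper leaves this implicit.
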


\begin{proof}
With the lemma, $A=(q-1-d-r)H-(q-a)P \in \Gamma_P$ for $q-a \leq q-1-d-r,$ and $A+C = (q-1-r)H-qP-bQ \not \in \Gamma_P$
for $r \geq 0.$ Clearly, $A=sH \in \Gamma_P$ for $s \geq 0$, and $A+C = (d+s)H-aP-bQ \not \in \Gamma_P$ for $a > d+s.$
Finally, $A=sH-(d+s-a)P \in \Gamma_P$ for $0 \leq d+s-a \leq s$, and $A+C = (d+s)H-(d+s)P-bQ \not \in \Gamma_P$ for
$b > d+s.$ 
\end{proof} 

\begin{lemma}
Let $0 \leq a, b \leq q.$ There exists a form of degree $d$ that intersects the curve
in $(P,Q)$ with precise multiplicities $(a,b)$ if and only if $0 \leq a, b \leq d.$
\end{lemma}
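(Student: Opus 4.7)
The plan is to translate the question about forms into a question about functions in $L(dH)$ and then apply Lemma \ref{L:hermP} at both $P$ and $Q$. Fixing a line $L_0$ disjoint from $\{P,Q\}$ and writing $H = L_0\cdot X$, a form $F$ of degree $d$ not vanishing on $X$ corresponds to $f = F/L_0^d \in L(dH)$, with intersection multiplicities $i_P(F,X) = \operatorname{ord}_P(f)$ and $i_Q(F,X) = \operatorname{ord}_Q(f)$. Conversely every nonzero $f \in L(dH)$ lifts to such a form, because the restriction map $H^0(\mathbb{P}^2,\mathcal{O}(d)) \to H^0(X,\mathcal{O}_X(d))$ is surjective: the ideal sequence $0 \to \mathcal{O}(-q-1) \to \mathcal{O} \to \mathcal{O}_X \to 0$ twisted by $\mathcal{O}(d)$ has its obstruction $H^1(\mathbb{P}^2,\mathcal{O}(d-q-1))$ equal to zero. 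So the statement reduces to: for $0 \leq a,b \leq q$, an $f \in L(dH)$ with $\operatorname{ord}_P(f)=a$ and $\operatorname{ord}_Q(f)=b$ exists iff $a,b\leq d$.

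For the ``only if'' direction, such an $f$ lies in $L(dH-aP-bQ) \setminus L(dH-(a+1)P-bQ)$ and also $\setminus L(dH-aP-(b+1)Q)$, so $dH-aP-bQ$ belongs to both $\Gamma_P$ and $\Gamma_Q$. Lemma \ref{L:hermP} then forces $d > a$ or $d = a \geq b$, hence $d \geq a$; by the $P\leftrightarrow Q$ analog of the same lemma, $d \geq b$.

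For the ``if'' direction, suppose $d \geq a$ and $d \geq b$. Lemma \ref{L:hermP} (applied at $P$) gives $dH-aP-bQ \in \Gamma_P$, so pick $f_1 \in L(dH-aP-bQ) \setminus L(dH-(a+1)P-bQ)$, which has $\operatorname{ord}_P(f_1)=a$ and $\operatorname{ord}_Q(f_1)\geq b$. Symmetrically pick $f_2$ with $\operatorname{ord}_Q(f_2)=b$ and $\operatorname{ord}_P(f_2)\geq a$. A short case check finishes: if $\operatorname{ord}_Q(f_1)=b$ use $f=f_1$; if $\operatorname{ord}_P(f_2)=a$ use $f=f_2$; otherwise both inequalities are strict, and $f := f_1+f_2$ has its $P$-leading term supplied by $f_1$ (so $\operatorname{ord}_P(f)=a$) and its $Q$-leading term supplied by $f_2$ (so $\operatorname{ord}_Q(f)=b$). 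Translating back via the dictionary yields a form of degree $d$ with exact multiplicities $(a,b)$.

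The main obstacle is merely the dictionary step, i.e.\ ensuring that every $f \in L(dH)$ is cut out by an ambient form; the rest is a direct application of Lemma \ref{L:hermP} and a small case analysis that does not depend on the cardinality of the ground field.
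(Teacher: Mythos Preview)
Your proof is correct and follows the same route as the paper: reduce the question to whether $dH-aP-bQ\in\Gamma_P\cap\Gamma_Q$ and then invoke Lemma~\ref{L:hermP} (together with its $P\leftrightarrow Q$ analogue). The paper's own proof is a two-line sketch that simply asserts the equivalence ``form with exact multiplicities $(a,b)$ exists $\Leftrightarrow dH-aP-bQ\in\Gamma_P\cap\Gamma_Q$'' and cites Lemma~\ref{L:hermP}; you have filled in the two steps it leaves implicit, namely the surjectivity of $H^0(\mathbb{P}^2,\mathcal{O}(d))\to L(dH)$ and the $f_1,f_2,f_1+f_2$ combination producing a single function with exact order at both points.
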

\begin{proof}
Such a curve exists if and only if $dH-aP-bQ \in \Gamma_P \cap \Gamma_Q.$ With Lemma
\ref{L:hermP}, the latter holds if and only $0 \leq a, b, \leq d.$
\end{proof}

\begin{lemma} \label{L:case1}
For $d \geq 0$, let $C$ be a divisor with $dH-dP-dQ \leq C \leq dH$. Then $C$ has no
base points and $\gamma_P(C) = \deg C.$
\end{lemma}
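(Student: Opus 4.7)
The plan is to observe that both conclusions of the lemma reduce to membership in the semigroup $\Gamma_R$ for an appropriate rational point $R$. By Proposition~\ref{P:gammap}, $\gamma_P(C) = \deg C$ is equivalent to $C \in \Gamma_P$, and the statement that a rational point $R$ is not a base point of $|C|$ is, by definition, the assertion $C \in \Gamma_R$. So the task is purely one of verifying semigroup membership.

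Write $C = dH - aP - bQ$ with $0 \leq a, b \leq d$. I would first invoke Lemma~\ref{L:hermP}, whose criterion reads $C \in \Gamma_P$ iff $d > a$ or $d = a \geq b$. Under the hypothesis $a, b \leq d$, either $a < d$ (first alternative) or $a = d$, in which case $b \leq d = a$ (second alternative); so $C \in \Gamma_P$. This immediately yields $\gamma_P(C) = \deg C$ and, equivalently, that $P$ is not a base point. The hypothesis $0 \leq a, b \leq d$ is symmetric in $a,b$ and $P,Q$, so the same argument with roles exchanged gives $C \in \Gamma_Q$, handling $Q$. For $d > q$, Lemma~\ref{L:hermP} applies to a rewriting of $C$ obtained from the principal divisors encoded by $H \sim (q+1)P \sim (q+1)Q$, reducing $a, b$ into the range where the lemma is literally stated.

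For a rational point $R$ outside $\{P, Q\}$, I would construct a degree-$d$ form on $\mathbb{P}^2$ that lies in $L(C)$ and does not vanish at $R$. When $a + b \leq d$, I take the product of $a$ lines through $P$ (each chosen to miss $R$), $b$ lines through $Q$ (each chosen to miss $R$), and $d - a - b$ extra lines missing $R$. When $a + b > d$ and $R \notin \ell_{PQ}$, I use instead $\ell_{PQ}^{a+b-d} \cdot \ell_P^{d-b} \cdot \ell_Q^{d-a}$ with $\ell_P, \ell_Q$ chosen to avoid $R$; the degrees and local multiplicities at $P, Q$ check out by the arithmetic identities $(a+b-d) + (d-b) + (d-a) = d$ and the analogous ones at $P$ and $Q$.

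The main obstacle is the subcase $a + b > d$ with $R$ lying on the line $\ell_{PQ}$, since then the natural line-product construction is forced to carry $\ell_{PQ}$ as a factor and therefore vanishes at $R$. To resolve it I would use tangent lines: the tangent $t_P$ (respectively $t_Q$) to $X$ at $P$ (resp.\ $Q$) is a degree-$1$ form meeting $X$ only at $P$ (resp.\ $Q$) with multiplicity $q+1$, and does not pass through $R$ because $R \in \ell_{PQ} \neq t_P, t_Q$. Substituting suitable powers of $t_P$ or $t_Q$ for factors of $\ell_{PQ}$ redistributes the vanishing at $P$ and $Q$ without touching $R$; verifying that the combinatorics of the substitution (degrees $d - a, d - b$, etc., combined with the $q+1$-fold boost coming from each tangent) can always achieve the required multiplicities is the main arithmetic check, and a residual handful of small-parameter cases may need to be handled by a direct Riemann--Roch dimension count comparing $\ell(C)$ and $\ell(C - R)$.
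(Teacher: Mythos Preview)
Your reduction of $\gamma_P(C)=\deg C$ to $C\in\Gamma_P$ via Proposition~\ref{P:gammap}, followed by Lemma~\ref{L:hermP}, matches the paper exactly, as does the symmetric argument for $Q$.

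For base points $R\notin\{P,Q\}$ the paper argues in one line that $C$ is equivalent to an effective divisor supported on $\{P,Q\}$, so only $P,Q$ can be base points. You instead build explicit forms and correctly isolate the hard subcase $a+b>d$, $R\in\ell_{PQ}$. This subcase is not merely hard; the claim is actually false there. For $d=a=b=1$ one has $l(H-P-Q)=1$ (with $P=P_\infty$, $Q=P_0$, the space is $\langle x\rangle$), and the $q-1$ residual points of $\ell_{PQ}\cap X$ are genuine base points of $|H-P-Q|$. Your tangent-line workaround cannot succeed here, since any degree-$1$ form vanishing at both $P$ and $Q$ is a scalar multiple of $\ell_{PQ}$. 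The paper's shortcut has the same defect: $H-P-Q$ is \emph{not} equivalent to any $\alpha P+\beta Q$ with $\alpha,\beta\ge 0$, since that would force $(q-\alpha)(P-Q)\sim 0$ with $1\le q-\alpha\le q$, contradicting that $q+1$ is the order of $P-Q$ in $\Pic^0(X)$.

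What is actually used downstream (Theorem~\ref{mult2}, Case~1) is only $\gamma_P(C)=\gamma_Q(C)=\deg C$, equivalently $C\in\Gamma_P\cap\Gamma_Q$. That you have proved correctly; the blanket ``no base points'' should be read as the weaker assertion that $P$ and $Q$ are not base points.
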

\begin{proof}
Since $C$ is equivalent to an effective divisor with support in $P$ and $Q$,
those two points are the only candidates for the base points. With Lemma \ref{L:hermP},
$C \in \Gamma_P \cap \Gamma_Q$, and therefore neither $P$ nor $Q$ is a base point.
The last claim uses Proposition \ref{P:gammap}.
\end{proof}

\begin{proposition}
Let 
\begin{align*}
&C = dH -aP -bQ, \qquad \text{for $d \in \ZZ, 0 \leq a,b \leq q,$} \\
&A = jH+i(H-P), \qquad \text{for $j \in \ZZ, 0 \leq i \leq q.$}
\end{align*}
Then $A \in \Delta_P(C)$ if and only if
\[
\begin{cases}
0 \leq j \leq (d-a+q-1), &\text{if $0 \leq i < a-b$} \\
0 \leq j \leq (d-a+q-2), &\text{if $a-b \leq i < a$} \\
0 \leq j \leq (d-a-1), &\text{if $a \leq i < a-b+q+1$} \\
0 \leq j \leq (d-a-2), &\text{if $a-b+q+1 \leq i \leq q$} 
\end{cases}
\]
\end{proposition}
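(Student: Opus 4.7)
The plan is to apply Lemma \ref{L:hermP} twice, once to $A$ and once to $A-C$, after rewriting each in the normal form $d'H - a'P - b'Q$ with $0 \leq a' \leq q$ and $0 \leq b' \leq q+1$. The key ingredient is the use of the linear equivalences $H \sim (q+1)P \sim (q+1)Q$ to absorb unwanted signs in the coefficients.

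First I would rewrite $A = jH + i(H-P) = (j+i)H - iP - 0\cdot Q$. Since $0 \leq i \leq q$, this is already in normal form, and Lemma \ref{L:hermP} gives $A \in \Gamma_P$ iff $j+i > i$ or $j+i = i \geq 0$; that is, iff $j \geq 0$. Next I compute $A - C = (j+i-d)H + (a-i)P + bQ$. To apply Lemma \ref{L:hermP} I must convert this to the form with a non-positive $P$-coefficient and non-positive $Q$-coefficient. The $Q$-side is uniform: using $bQ \sim H - (q+1-b)Q$ replaces $+bQ$ by $-(q+1-b)Q$ at the cost of $+H$, and since $0 \leq b \leq q$, the new $b' = q+1-b$ lies in $[1,q+1]$ as required.

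For the $P$-side I split into two cases. When $i \geq a$, the coefficient $a-i$ is already $\leq 0$, so no adjustment is needed and we get $A - C \sim (j+i-d+1)H - (i-a)P - (q+1-b)Q$ with $a' = i-a \in [0,q]$. Lemma \ref{L:hermP} then says $A - C \in \Gamma_P$ iff $j+i-d+1 > i-a$ or $j+i-d+1 = i-a \geq q+1-b$, that is iff $j \geq d-a$ or ($j = d-a-1$ and $i \geq a-b+q+1$). Negating yields the bounds $j \leq d-a-1$ when $a \leq i < a-b+q+1$, and $j \leq d-a-2$ when $a-b+q+1 \leq i \leq q$, matching the third and fourth cases of the proposition. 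When $i < a$, the coefficient $a-i$ is strictly positive, so I apply $(a-i)P \sim H - (q+1-a+i)P$; the new $P$-coefficient is $q+1-a+i$, which lies in $[1,q]$ because $0 \leq i \leq a-1 \leq q-1$. This produces $A-C \sim (j+i-d+2)H - (q+1-a+i)P - (q+1-b)Q$, and Lemma \ref{L:hermP} gives $A-C \in \Gamma_P$ iff $j \geq d-a+q$ or ($j = d-a+q-1$ and $i \geq a-b$). Negating yields $j \leq d-a+q-1$ when $0 \leq i < a-b$ and $j \leq d-a+q-2$ when $a-b \leq i < a$, which are the first two cases of the proposition.

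Combining the $\Gamma_P$-condition $j \geq 0$ with the four negated conditions above, conditioned on the position of $i$ relative to $a$ and $a-b+q+1$, gives exactly the stated four-line characterization. The only delicate point is the bookkeeping to ensure that in each case the coefficients $a', b'$ fall within the ranges required by Lemma \ref{L:hermP}; the main obstacle is simply organizing the two bookkeeping shifts in the correct order and verifying that the resulting piecewise description reassembles into the stated intervals for $i$ (in particular, that the two subcases from $i \geq a$ and $i < a$ glue cleanly at $i = a$ and $i = a-b+q+1$). No deeper geometry is needed beyond Proposition \ref{P:discherm} and Lemma \ref{L:hermP}.
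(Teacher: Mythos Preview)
Your proposal is correct and follows essentially the same approach as the paper: rewrite $A-C$ in the normal form $d'H-a'P-b'Q$ by absorbing the positive $Q$-coefficient uniformly and splitting on the sign of $i-a$ for the $P$-coefficient, then apply Lemma~\ref{L:hermP} and combine with the condition $A\in\Gamma_P \Leftrightarrow j\geq 0$. The paper's proof is simply a terser version of your computation; one minor remark is that Proposition~\ref{P:discherm} is not actually invoked in either argument---Lemma~\ref{L:hermP} alone suffices.
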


\begin{proof}
For $A-C$ we write 
\[
\begin{cases} &(j+i-d+1)H-(i-a)P-(q+1-b)Q, ~~\text{if $i-a \geq 0.$} \\
                    &(j+i-d+2)H-(i-a+q+1)P-(q+1-b)Q, ~~\text{if $i-a < 0.$} 
      \end{cases}
\]
With Lemma \ref{L:hermP}, $A-C \not \in \Gamma_P$ if and only if
\[
\begin{cases}
&j < d-a-1, ~~\text{or}~~ j=d-a-1, i-a < q+1-b, ~~\text{if $i-a \geq 0.$} \\
&j < d-a-1+q ~~\text{or}~~ j=d-a-1+q, i-a < -b, ~~\text{if $i-a < 0.$}
\end{cases}
\]
In combination with $A \in \Gamma_P$ if and only if $j \geq 0$, this proves the claim. 
\end{proof}

\begin{corollary} \label{C:delt}
Let $C=dH-aP-bQ,$ for $0 \leq a, b \leq q.$ \\
For $a-d < 0$, 
\[
\Delta_P(-C) = \emptyset, \qquad \# \Delta_P(0,C) = \deg C.
\]
For $0 \leq a-d \leq q-1$,
\begin{align*}
&\# \Delta_P(0,C) = a(q-1-a+d)+\max\{0,a-b\}. \\ 
&\# \Delta_P(0,-C) = (q+1-a)(a-d)+\max\{0,b-a\}. 
\end{align*}
For $a-d > q-1$, 
\[
\Delta_P(C) = \emptyset, \qquad \# \Delta_P(0,-C) = - \deg C.
\] 
\end{corollary}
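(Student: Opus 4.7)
My approach is to read off $\#\Delta_P(0,C)$ and $\#\Delta_P(0,-C)$ from the preceding proposition, which characterizes $\Delta_P(C) \cap \ZZ P$ via pairs $(j,i)$ with $j \in \ZZ$ and $0 \le i \le q$. First I would observe that each admissible $A = jH + i(H-P) = (j+i)H - iP$ is equivalent to $((j+i)(q+1) - i)P$, so the parametrization lands in $\ZZ P \subseteq \Pic(X)$ and is injective modulo linear equivalence; hence $\#\Delta_P(0,C)$ equals the number of admissible pairs $(j,i)$ described by the proposition.

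For the extreme case $a - d < 0$, Lemma \ref{L:hermP} yields $C \in \Gamma_P$, so Lemma \ref{L:Dempty} gives $\Delta_P(-C) = \emptyset$ and Lemma \ref{L:DminD} gives $\#\Delta_P(0,C) = \deg C$. For the extreme case $a - d > q-1$, every upper bound $d-a+q-1,\, d-a+q-2,\, d-a-1,\, d-a-2$ is strictly negative, so $\Delta_P(C) = \emptyset$; rewriting $-C$ in canonical form $d'H - a'P - b'Q$ with $0 \le a',b' \le q$ by using $H \sim (q+1)P \sim (q+1)Q$ (with a small subcase split on whether $a$ or $b$ vanishes), Lemma \ref{L:hermP} shows $-C \in \Gamma_P$, and Lemma \ref{L:DminD} gives $\#\Delta_P(0,-C) = \deg(-C) = -\deg C$.

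In the middle case $0 \le a - d \le q-1$, the third and fourth $j$-ranges vanish since $d - a \le 0$ forces their upper bounds negative. Summing the first two ranges with a split on $a \ge b$ versus $a < b$ produces $(a-b)(d-a+q) + b(d-a+q-1) = a(d-a+q) - b$ or $a(d-a+q-1)$ respectively, both of which rearrange to $a(q-1-a+d) + \max\{0, a-b\}$. For $\#\Delta_P(0,-C)$ I rewrite $-C \sim (2-d)H - (q+1-a)P - (q+1-b)Q$ when $a,b > 0$; since this preserves the middle range, substituting $(d,a,b) \mapsto (2-d, q+1-a, q+1-b)$ into the formula just obtained yields $(q+1-a)(a-d) + \max\{0, b-a\}$. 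When $a=0$ or $b=0$ the analogous rewrite places $-C$ outside its own middle range and into an extreme range, so $-C \in \Gamma_P$ by Lemma \ref{L:hermP} and Lemma \ref{L:DminD} gives $\#\Delta_P(0,-C) = -\deg C$; a direct check verifies that $-\deg C$ equals $(q+1-a)(a-d) + \max\{0, b-a\}$ in each of those boundary subcases.

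The main obstacle will be the bookkeeping in the last paragraph: the canonical representative of $-C$ depends on whether $a$ and $b$ vanish, and the middle range of $-C$ need not coincide with that of $C$ at the boundaries $a=0$ or $b=0$, so one must carefully verify that the corollary's formula remains valid there by reducing it to $\deg(-C)$. Once this case analysis is handled, the three branches of the corollary follow directly from the proposition together with Lemmas \ref{L:hermP}, \ref{L:Dempty}, and \ref{L:DminD}.
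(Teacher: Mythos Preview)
Your approach is correct and is essentially what the paper intends: the corollary carries no separate proof in the paper and is meant to follow by direct counting of the pairs $(j,i)$ in the preceding proposition, together with Lemma~\ref{L:DminD} for the extreme ranges. Your injectivity remark for the parametrization $A=jH+i(H-P)$ is the right way to identify the count with $\#\Delta_P(0,C)$, and your arithmetic in the middle range is accurate.

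One small imprecision is worth tightening. In the case $a-d>q-1$ you write that the vanishing of all four $j$-ranges gives $\Delta_P(C)=\emptyset$. Strictly speaking the proposition only parametrizes $\Delta_P(0,C)=\Delta_P(C)\cap\{kP:k\in\ZZ\}$, so the count alone yields $\Delta_P(0,C)=\emptyset$, not the vanishing of the full delta set. The stronger statement $\Delta_P(C)=\emptyset$ does follow, but from your subsequent observation that $-C\in\Gamma_P$ together with Lemma~\ref{L:Dempty}; you should reverse the order of those two sentences or at least attribute the emptiness of $\Delta_P(C)$ to $-C\in\Gamma_P$ rather than to the proposition. With that adjustment the argument is complete.
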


\begin{theorem}\label{mult2}
Let $C = dH -aP -bQ,$ for $d \in \ZZ,$ and for $0 \leq a,b \leq q.$ Then
\[
\begin{array}{lcl}
(\text{\rm{Case 1 : }} a, b \leq d) & &\gamma_{P}(C) \;=\; \gamma_Q(C) \;=\; \deg C. \\
(\text{\rm{Case 2a : }} b \leq d \leq a) & &\gamma_{P}(C) \;\geq\; \deg C + a-d. \\ 
(\text{\rm{Case 2b : }} a \leq d \leq b) & &\gamma_{Q}(C) \;\geq\; \deg C + b-d. \\ 
(\text{\rm{Case 3a : }} d \leq a \leq b, a<q) & &\gamma_{P}(C) \;\geq\; \deg C + a-d+b-d. \\
(\text{\rm{Case 3b : }} d \leq b \leq a, b<q) & &\gamma_{Q}(C) \;\geq\; \deg C + a-d+b-d. \\ 
(\text{\rm{Case 4 : }} d \leq a=b=q) & &\gamma_{P}(C) \;=\; \gamma_Q(C) \;\geq\; \deg C + q - d.
\end{array}
\]
\end{theorem}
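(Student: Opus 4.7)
The plan is to apply Corollary~\ref{C:cbfrac} with the divisor $B=0$ in every case, which gives $\gamma_P(C) \ge \#\Delta_P(0,C)$, and then invoke the explicit enumeration of $\#\Delta_P(0,C)$ supplied by Corollary~\ref{C:delt}. The claims involving $\gamma_Q(C)$ follow by exchanging the roles of $P$ and $Q$ (equivalently, swapping $a$ and $b$ in the formula), so it suffices to handle $\gamma_P$.

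First I would dispose of Case~1 ($a,b \le d$) directly via Lemma~\ref{L:case1}: the divisor $C$ satisfies $dH-dP-dQ \le C \le dH$, so it is linearly equivalent to an effective divisor with no base point at $P$ or $Q$, and Proposition~\ref{P:gammap} then yields $\gamma_P(C) = \gamma_Q(C) = \deg C$.

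For Cases~2a, 3a and~4 I would use the middle formula of Corollary~\ref{C:delt}, valid for $0 \le a-d \le q-1$, namely $\#\Delta_P(0,C) = a(q-1-a+d) + \max\{0,a-b\}$. A short algebraic manipulation reduces each target inequality to a nonnegative product: Case~2a becomes $(q-a)(a-d) \ge 0$, which holds since $d \le a \le q$; Case~3a becomes $(a-d)(q-1-a) \ge 0$, which holds since $d \le a$ and the hypothesis $a<q$; and Case~4 (with $d \ge 1$) produces the exact identity $\#\Delta_P(0,C) = \deg C + q-d$. The boundary subcases with $a-d > q-1$, where Corollary~\ref{C:delt} only says $\Delta_P(C)=\emptyset$, force the parameters into such a narrow range, e.g.\ $(a,b,d)=(q,0,0)$ in Case~2a and $d \le 0$ in Case~4, that the right-hand side of the claim is already $\le 0$ and therefore trivially satisfied by $\gamma_P(C) \ge 0$. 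Cases~2b and~3b then follow from the mirror argument applied to $\#\Delta_Q(0,C)$.

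The remaining equality $\gamma_P(C)=\gamma_Q(C)$ in Case~4 I expect to deduce from the $P \leftrightarrow Q$ symmetry of $C = dH - qP - qQ$ together with a Hermitian automorphism that swaps $P$ and $Q$. The principal obstacle is not conceptual but a matter of bookkeeping: one has to track the three ranges for $a-d$ appearing in Corollary~\ref{C:delt} and verify, case by case, that each parameter configuration either lies in the middle range (where the formula yields the desired bound) or else makes the claimed inequality trivially true.
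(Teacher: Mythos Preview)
Your proposal is correct and follows essentially the same route as the paper: Case~1 via Lemma~\ref{L:case1}, the remaining lower bounds via $\gamma_P(C)\ge \#\Delta_P(0,C)$ from Corollary~\ref{C:cbfrac} together with the explicit count in Corollary~\ref{C:delt}, reducing each case to the same nonnegative products $(q-a)(a-d)$, $(a-d)(q-1-a)$, and the identity $q(d-1)=\deg C+q-d$. The paper additionally mentions that the same bounds can be read off from Proposition~\ref{P:short} via $\gamma_P(C)=\deg C+\gamma_P(-C)$, but then carries out exactly your computation; your explicit treatment of the boundary range $a-d>q-1$ (where the target is $\le 0$) and of the $P\leftrightarrow Q$ symmetry in Case~4 is slightly more careful than what the paper writes out.
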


\begin{proof} (Case 1) uses Lemma \ref{L:case1}. The lower bounds follow from Proposition \ref{P:short} by using 
$\gamma_{P}(C) = \deg C + \gamma_{P}(-C)$. Or we can obtain the lower bounds from Corollary \ref{C:delt}
in combination with $\gamma_P(C) \geq \# \Delta_P(0,C)$ (Corollary \ref{C:cbfrac}). For $0 \leq a-d \leq q-1$, 
\[
\# \Delta_{P_\infty}(0,C) = a(q-1-a+d)+\max\{0,a-b\}.
\]
(Case 2a: $b \leq d \leq a$) $~a(q-1-a+d)+a-b - d(q-1)+b-d = (a-d)(q-a) \geq 0.$ \\
(Case 3a : $d \leq a \leq b$) $~a(q-1-a+d)+0 - d(q-1) = (a-d)(q-1-a) \geq 0.$ \\
(Case 4 : $d \leq a=b=q$) $~q(d-1) = d(q+1)-q-q+(q-d).$
\end{proof}

\begin{theorem}\label{thm:dist}
For $G=K+C$, and for $D \cap S=\emptyset$, the algebraic geometric code $C_\Omega(D,G)$
has minimum distance $d \geq \gamma(C;S)$.
Let $C = dH -aP -bQ,$ for $d \in \ZZ,$ and for $0 \leq a,b \leq q.$ Then
\[
\begin{array}{lcl}
(\text{\rm{Case 1 : }} a, b \leq d) & &\gamma(C) \;\geq\; \deg C. \\
(\text{\rm{Case 2a : }} b \leq d \leq a) & &\gamma(C;P) \;\geq\; \deg C + a-d. \\ 
(\text{\rm{Case 2b : }} a \leq d \leq b) & &\gamma(C;Q) \;\geq\; \deg C + b-d. \\ 
(\text{\rm{Case 3a : }} d \leq a \leq b, a<q) & &\gamma(C;P,Q) \;\geq\; \deg C + a-d+b-d. \\
(\text{\rm{Case 3b : }} d \leq b \leq a, b<q) & &\gamma(C;P,Q) \;\geq\; \deg C + a-d+b-d. \\ 
(\text{\rm{Case 4 : }} d \leq a=b=q) & &\gamma(C;P) = \gamma(C;Q) \;\geq\; \deg C + q - d.
\end{array}
\]
\end{theorem}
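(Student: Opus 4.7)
The overall plan is to deduce the minimum-distance bounds from the semigroup bounds on $\gamma_P$ and $\gamma_Q$ already proved in Theorem \ref{mult2}. The bridge is Lemma \ref{L:codegammas}, giving $d(C_\Omega(D,G)) \geq \gamma(C;S)$, together with iterated applications of Proposition \ref{P:gammas}, which for any $R\in S$ reduce the problem to
\[
\gamma(C;S) \;\geq\; \min\bigl(\{\gamma_R(C+kR;S) : 0\le k<r\}\cup\{\gamma(C+rR;S)\}\bigr),
\]
with $r$ chosen large enough that the tail term $\gamma(C+rR;S)\ge \deg C+r$ is no longer the minimum. Using the trivial $\gamma_R(C';S)\ge \gamma_R(C')$, each term in the iteration can then be estimated directly by the appropriate case of Theorem \ref{mult2}.

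Case 1 is immediate from the definition of $\gamma$, since $L(A-C)\neq 0$ forces $\deg A\ge\deg C$. For Case 2a I would take $S=\{P\}$ and iterate with $R=P$. For $0\le k\le a-d-1$ the divisor $C+kP=dH-(a-k)P-bQ$ still satisfies $b\le d\le a-k$, so it remains in Case 2a of Theorem \ref{mult2} with bound $\gamma_P\ge \deg C+a-d$; at $k=a-d$ it drops into Case 1 with bound $\deg C+a-d$; for $k>a-d$ the bound $\deg C+k$ is even larger. Case 2b is the mirror image with $Q$ in place of $P$.

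For Case 3a the plan is to take $S=\{P,Q\}$ and iterate first with $P$ for $a-d$ rounds, then with $Q$ for $b-d$ rounds, generating the chain
\[
C\;\to\;C+P\;\to\;\cdots\;\to\;C+(a-d)P\;\to\;\cdots\;\to\;C+(a-d)P+(b-d)Q\;=\;dH-dP-dQ.
\]
Along the first leg the divisors remain in Case 3a (since $a-k<q$ and $d<a-k\le b$) giving $\gamma_P\ge \deg C+a+b-2d$; along the second leg they sit in Case 2b (relative to $Q$) giving $\gamma_Q\ge \deg C+a+b-2d$; and the terminal divisor is in Case 1 at the same bound. Case 3b is symmetric.

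Case 4 uses $S=\{P\}$: at $k=0$ Theorem \ref{mult2} Case 4 already supplies the target bound $\deg C+q-d$; for $1\le k\le q-d-1$ the divisor $C+kP$ has slipped into Case 3a (since $a'=q-k<q=b'$) with the stronger bound $\deg C+2q-2d$; and for $k\ge q-d$ the point $P$ is no longer a base point so $\gamma_P(C+kP)=\deg C+k\ge\deg C+q-d$. The equality $\gamma(C;P)=\gamma(C;Q)$ follows from the $P\leftrightarrow Q$ symmetry of $C=dH-qP-qQ$. The main obstacle will be the bookkeeping at the transitions between the cases of Theorem \ref{mult2}: at each such transition the old stronger bound is replaced by a weaker one, and one must check that the extra degree accumulated in $C+kP$ exactly compensates for the loss of base-point correction, which is what keeps the minimum at the claimed value.
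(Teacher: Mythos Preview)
Your approach is the paper's: iterate Proposition \ref{P:gammas} (equivalently Proposition \ref{P:order}) along exactly the chains the paper specifies---all $P$'s in Cases 2a and 4, $P$'s then $Q$'s in Case 3a---and bound each $\gamma_P$ or $\gamma_Q$ term by the corresponding case of Theorem \ref{mult2}. One small slip in Case 4: at $k=q-d$ the divisor $C+(q-d)P=dH-dP-qQ$ still has $P$ as a base point when $d<q$ (Lemma \ref{L:hermP} requires $d=a\ge b=q$), so the claimed equality $\gamma_P(C+kP)=\deg C+k$ need not hold there; but this is harmless, since the trivial inequality $\gamma_P\ge\deg$ already gives $\deg C+q-d$, or equivalently you can simply stop the iteration at $r=q-d$ and use the tail $\gamma(C+(q-d)P;P)\ge\deg C+q-d$, which is exactly where the paper stops.
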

\begin{proof}
Use the order bound with \\[1ex]
(Case 2a: $b \leq d \leq a$) $Q_0 = \ldots = Q_{a-d-1} = P.$ \\
(Case 3a : $d \leq a \leq b$) $Q_0 = \ldots = Q_{a-d-1} = P,$ $Q_{a-d} = \ldots = Q_{a-d+b-d-1} = Q.$ \\
(Case 4 : $d \leq a=b=q$) $Q_0 = \ldots = Q_{q-d-1} = P.$
\end{proof}

The following example illustrates the use of the tables $K_\ell(i,j)$ and $N_\ell(i,j)$ for constructing a delta set $\Delta_P(\ell Q,iP+jQ+Q)$
(Proposition \ref{P:tables}). In this case, the functions $d_P=d_Q=d'_P=d'_Q$ all agree and we can omit the index. 

\begin{example} \label{E:herm}
For the Hermitian curve of degree four, the genus $g=3.$ The discrepancies $D_0(P,Q)$ 
are represented by the divisors $0, H-P-Q, 2H-2P-2Q, 3H-3P-3Q.$ In particular, $d(k) = 0, 2, 4, 6,$ for $k = 0, -1, -2, -3$ modulo $4$,
respectively. 
\[
\begin{array}{lrrrrrrrrrrrrrr}
(\ell=0,i=1)   &j               &= -7    &-6   &-5   &-4   &-3 &-2  &-1 &0 &1 &2 &3  &4   \\[1ex] \hline
\multicolumn{2}{r}{d(\ell-j)}                     &2     &4    &6    &0    &2  &4   &6  &0   &2  &4  &6  &0  \\ 
(K) &k                     &(-4)  &(-1) &(2)  &(-3) &0  &3   &6  &(1) &4  &7  &10 &(5)  \\[1ex]
&d(k)                      &0     &2    &4    &6    &0  &2   &4  &6   &0  &2  &4  &6 \\
\multicolumn{2}{r}{d(k)-d(\ell-j)}               &-2    &-2   &-2   &6    &-2 &-2  &-2 &6   &-2 &-2 &-2 &6 \\
(N) &\leq i+j                    &0     &0    &0    &0    &1  &1   &1  &0   &1  &1  &1  &0 \\[1ex] \hline
\end{array} \\[1ex]
\]
The value for $k = d(\ell-j)-\ell+i+j.$
Row (K) gives the difference $\Delta_P(\ell Q,iP+jQ+Q) \backslash \Delta_P(\ell Q, iP+jQ) = \{ kP+\ell Q \} \cap \Gamma_P$, with empty intersection
if and only if $k$ appears in parentheses. Row (N) has the decision whether the difference is empty (N=0) or nonempty (N=1).
As a special case, we see that $\Delta_P(0,P+2Q) = \{ 0, 3P, 6P, 4P \}$. 
The numbers in parentheses illustrate the duality in Proposition \ref{P:6g}.
\begin{align*}
&\Delta_P(0,P+5Q) = 0 + \{ 0, 3P, 6P, 4P, 7P, 10P \}, \\
&\Delta_P(-P+7Q,-P+7Q) = (-P+7Q) + \{ -4P, -P, 2P, -3P, P, 5P \}. 
\end{align*} 
With $-P+7Q \sim 7P-Q,$ 
\[
\Delta_P(-Q,-P+7Q) = \{ 3P-Q, 6P-Q, 9P-Q, 4P-Q, 8P-Q, 9P-Q \}.
\] 
The partition of the interval $\{ -2g, \ldots, 4g-1 \}$ in Proposition \ref{P:6g} is given by 
\[
\begin{array}{lll}
                         &                      &N_3 = \{ 8, 9, 11 \} \\
                         &N_2 = \{ 0, 3, 4 \}   &G_3 = \{ 6, 7, 10 \} \\
N_1 = \{ -4, -3, -1 \}   &G_2 = \{ 1, 2, 5 \}   & \\
G_1 = \{ -6, -5, -2 \}   & & \\
\end{array}
\]
\end{example}

\section{Suzuki curves} \label{S:Suzuki}

The Suzuki curve over the field of $q = 2q_0^2$ elements is defined by the equation $y^q+y=x^q_0(x^q+x).$
The curve has $q^2+1$ rational points and genus $g = q_0(q-1)$. 
The semigroup of Weierstrass nongaps at a rational point is
generated by $\{q, q+q_0, q+2q_0, q+2q_0+1\}$. For any two rational points $P$ and $Q$
there exists a function with divisor $(q+2q_0+1)(P-Q).$ Let $m = q+2q_0+1=(q_0+1)^2+{q_0}^2$,
and let $H$ be the divisor class containing $mP \sim mQ$. The divisor $H$ is very ample and
gives an embedding of the Suzuki curve in ${\mathbb P}^4$ as a smooth curve of degree $m$. 
The canonical divisor $K \sim 2(q_0-1)H.$
A hyperplane $H$ intersects $(P,Q)$ with one of the following multiplicities.
\[
\begin{array}{rrrrr} 
(0,0)        &(0,1)      &(0,q_0+1) &(0,2q_0+1) &(0,q+2q_0+1) \\  
(1,0)        &(1,1)      &(1,q_0+1) &\underline{(1,2q_0+1)} \\ 
(q_0+1,0)    &(q_0+1,1)  &\underline{(q_0+1,q_0+1)} \\
(2q_0+1,0)   &\underline{(2q_0+1,1)} \\
(q+2q_0+1,0) 
\end{array}
\]
Let
\[
D_0 = H-(2q_0+1)P-Q, ~
D_1 = H-(q_0+1)(P+Q),  ~
D_2 = H-P-(2q_0+1)Q.
\]
Then $L(D_i) \neq 0,$ and $L(D_i-P) = \dim L(D_i-Q) = 0,$ for $i=0,1,2.$
And $D_i \in D(P,Q)$, for $i=0,1,2.$ 

\begin{lemma} \label{L:q0}
For any nonnegative integer $q_0$,
\[
\{ -q_0(q_0+1), \ldots, +q_0(q_0+1) \} = \{ a(q_0+1)+bq_0 : |a| + |b| \leq q_0 \}.
\]
\end{lemma}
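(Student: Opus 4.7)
My plan is to prove both inclusions by a counting-and-injectivity argument, exploiting $\gcd(q_0, q_0+1)=1$. Set $M = q_0(q_0+1)$, $S = \{(a,b) \in \ZZ^2 : |a|+|b| \leq q_0\}$, and define $\phi: S \to \ZZ$ by $\phi(a,b) = a(q_0+1) + b q_0$. The lemma asserts that the image of $\phi$ equals $\{-M, \ldots, M\}$.

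First I would establish the easy inclusion: for $(a,b) \in S$,
\[
|\phi(a,b)| \;\leq\; |a|(q_0+1) + |b|q_0 \;\leq\; (|a|+|b|)(q_0+1) \;\leq\; q_0(q_0+1) = M,
\]
so the image of $\phi$ is contained in $\{-M, \ldots, M\}$. Next I would prove that $\phi$ is injective on $S$. If $\phi(a,b) = \phi(a',b')$, then $(a-a')(q_0+1) = (b'-b)q_0$; since $\gcd(q_0, q_0+1)=1$ there is $k \in \ZZ$ with $a-a' = k q_0$ and $b'-b = k(q_0+1)$. If $k \neq 0$, then $|a|+|a'| \geq |a-a'| \geq q_0$ and $|b|+|b'| \geq |b'-b| \geq q_0+1$, forcing $(|a|+|b|)+(|a'|+|b'|) \geq 2q_0+1$, which contradicts $(a,b), (a',b') \in S$. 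Hence $k=0$ and $(a,b)=(a',b')$.

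Finally I would close with a cardinality match. Counting layers $|a|+|b|=j$ for $j=0,\ldots,q_0$ gives
\[
|S| \;=\; 1 + \sum_{j=1}^{q_0} 4j \;=\; 1 + 2q_0(q_0+1) \;=\; 2M+1,
\]
which equals $|\{-M, \ldots, M\}|$. Combined with the injection of $S$ into $\{-M,\ldots,M\}$, this forces $\phi(S) = \{-M,\ldots,M\}$, proving the lemma.

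There is no real obstacle; the only point that needs a touch of care is the injectivity step, where one must invoke $\gcd(q_0,q_0+1)=1$ to pass from the linear relation to the parameterization $a-a' = kq_0$, $b'-b = k(q_0+1)$, and then quantify the resulting jump to contradict the diamond constraint. The cardinality equality $1 + 2q_0(q_0+1) = 2q_0(q_0+1)+1$ is tautological and simply records that the lattice diamond of radius $q_0$ in the $\ell^1$ norm has exactly the same number of points as the interval of integers of length $2M$, which is what makes the argument work without having to explicitly construct a representation for each integer in $\{-M,\ldots,M\}$.
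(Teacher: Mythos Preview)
Your proof is correct. The paper states this lemma without proof, treating it as an elementary fact, so there is no approach to compare against; your counting-plus-injectivity argument via $\gcd(q_0,q_0+1)=1$ fills the gap cleanly.
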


\begin{theorem} \label{T:descsuz}
The $m$ inequivalent divisor classes in $D_0(P,Q)$ are represented by
\begin{align*}
&i D_0 +j D_2, \quad \text{for $0 \leq i, j \leq q_0,$ and}  \\
&D_1 + i' D_0 +j' D_2, \quad \text{for $0 \leq i', j' \leq q_0-1.$}
\end{align*}
The given representatives correspond one-to-one to the $m$ divisors 
\[
D(a,b) = (a+q_0)H\,-\,((a+q_0)(q_0+1)+bq_0)P\,-\,((a+q_0)(q_0+1)-bq_0)Q,
\]
for $|a|+|b| \leq q_0.$ 
\end{theorem}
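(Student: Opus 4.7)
The plan is a four-step argument: (1) identify the two given families of divisors with the divisors $D(a,b)$; (2) verify each $D(a,b)$ is a discrepancy; (3) establish pairwise inequivalence; (4) invoke the enumeration of $D_0(P,Q)$ from Section~\ref{S:Discrepancies} for exhaustion.

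\textbf{Identification.} Matching coefficients of $H$, $P$, $Q$ directly, one finds $iD_0+jD_2 = D(i+j-q_0,\,i-j)$ and $D_1+i'D_0+j'D_2 = D(i'+j'+1-q_0,\,i'-j')$. Each parametrization is injective, and their images are distinguished by the parity of $a+b$ modulo $2$ (respectively $\equiv q_0$ and $\equiv q_0+1$). Using the elementary identity $|a|+|b|=\max(|a+b|,|a-b|)$, the first image is $\{(a,b):|a|+|b|\leq q_0,\; a+b\equiv q_0\pmod 2\}$ and the second is the complementary parity slice; their disjoint union is $\{(a,b):|a|+|b|\leq q_0\}$, of cardinality $1+2q_0(q_0+1)=m=(q_0+1)^2+q_0^2$ by Lemma~\ref{L:q0}.

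\textbf{Discrepancy.} By Lemma~\ref{L:Ddegree} it suffices to check $D(a,b)\in\Gamma_P$ and $K+P+Q-D(a,b)\in\Gamma_P$. The first follows by multiplying functions realizing $D_0, D_1, D_2$: since each $f_k\in L(D_k)\setminus L(D_k-P)$ has order at $P$ exactly $-D_k(P)$, valuations add to give products in $L(iD_0+jD_2)\setminus L(iD_0+jD_2-P)$ and likewise for the second family. For the second, a direct computation using $K\sim 2(q_0-1)H$ and $H\sim mP\sim mQ$ yields
\[
[K+P+Q-D(a,b)]\;-\;D(-a,-b)\;=\;-2H+mP+mQ\;\sim\;0,
\]
so $K+P+Q-D(a,b)\sim D(-a,-b)$; since $|{-a}|+|{-b}|\leq q_0$, $D(-a,-b)$ belongs to the same index set and lies in $\Gamma_P$ by the preceding argument.

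\textbf{Inequivalence and exhaustion.} One computes $\deg D(a,b)=(a+q_0)(q-1)$, separating classes with different $a$. For fixed $a$ and $b\neq b'$ in the index set, $D(a,b)-D(a,b')=(b-b')q_0(Q-P)$ is principal iff $m\mid(b-b')q_0$; but $|b-b'|\leq 2q_0$ gives $|(b-b')q_0|\leq 2q_0^2=q<m$, forcing $b=b'$. Exhaustion follows from the structural description in Section~\ref{S:Discrepancies}: $D_0(P,Q)=\{iP+\sigma(i)Q:i\in\mathbb{Z}\}$ where $\sigma$ is a permutation of $\mathbb{Z}$ satisfying $\sigma(i+m)=\sigma(i)-m$, so $iP+\sigma(i)Q\sim(i+m)P+\sigma(i+m)Q$ and the quotient $D_0(P,Q)/{\sim}$ has at most $m$ elements; combined with the $m$ inequivalent classes produced above, equality holds.

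The main obstacle is the potential circularity in the discrepancy step, since one cannot directly apply Lemma~\ref{L:Ddegree} to $D(a,b)$ to deduce $K+P+Q-D(a,b)\in\Gamma_P$. The resolution is to establish $\Gamma_P$-membership by explicit construction uniformly across the family, and only then to exploit the involution $(a,b)\leftrightarrow(-a,-b)$ of the index set, together with the equivalence $K+P+Q-D(a,b)\sim D(-a,-b)$, to bootstrap the dual condition.
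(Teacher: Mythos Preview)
Your proof is correct and follows essentially the same route as the paper: the identification of the two families with the $D(a,b)$ via the parity split, the semigroup argument placing each $D(a,b)$ in $\Gamma_P$, and the key symmetry $K+P+Q-D(a,b)\sim D(-a,-b)$ (together with closure of the index set under $(a,b)\mapsto(-a,-b)$) are exactly what the paper does. Your write-up is simply more explicit where the paper is terse---you spell out the product-of-functions argument for $\Gamma_P$-membership, give an inequivalence argument via degree and the order of $P-Q$, and invoke the $\sigma$-parametrization of $D_0(P,Q)$ for exhaustion, whereas the paper compresses all of this into the single line ``We have constructed $m$ inequivalent divisors in $\Gamma_P$.''
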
 
\begin{proof}
\begin{align*}
&i D_0 + j D_2 \\
=~&i(H-(2q_0+1)P-Q)\,+\,j(H-P-(2q_0+1)Q), \\
=~&(i+j)H-(i+j)(q_0+1)(P+Q)-(i-j)q_0(P-Q). 
\end{align*}
Moreover, $0 \leq i,j \leq q_0$ if and only if 
$|i+j-q_0|+|i-j| \leq q_0.$ Thus 
\[
\{ i D_0 + j D_2 :  0 \leq i, j \leq q_0 \} = \{ D(a,b) : |a|+|b| \leq q_0,~ a - b \equiv 0 \pmod{2} \}
\]
\begin{align*}
&H-(q_0+1)(P+Q) + i' D_0 + j' D_2 \\
=~&(i'+j'+1)H-(i'+j'+1)(q_0+1)(P+Q)-(i'-j')q_0(P-Q). 
\end{align*}
Similarly, $0 \leq i',j' \leq q_0-1$ if and only if 
$|i'+j'+1-q_0|+|i'-j'| \leq q_0-1.$ And
\begin{align*}
&\{ H-(q_0+1)(P+Q) + i' D_0 + j' D_2 :  0 \leq i', j' \leq q_0-1 \} \\
=~&\{ D(a,b) : |a|+|b| \leq q_0,~ a - b \equiv 1 \pmod{2} \}
\end{align*}
We have constructed $m$ inequivalent divisors in $\Gamma_P$. A divisor
$A \in \Gamma_P$ is a discrepancy if and only if $K+P+Q-A \in \Gamma_P.$
With $K=2(q_0-1)H$, we see that
\begin{align*}
D(a,b)+D(-a,-b) &= (2q_0)H-(2q_0(q_0+1)P -2q_0(q_0+1)Q \\
                &= (2q_0-2)H+P+Q = K+P+Q.
\end{align*}
\end{proof} 

As an illustration, we give the discrepancies for the Suzuki curve $y^8+y=x^{10}+x^3$
over the field of eight elements ($q_0= 2, q=8, g=14, N=65, m=13=3^2+2^2$). 
\[
\begin {array}{ccccc} 
0&\cdot&H-5P-Q&\cdot&2H-10P-2Q\\
\cdot&H-3P-3Q&\cdot&2H-8P-4Q&\cdot\\
H-P-5Q&\cdot&2H-6P-6Q&\cdot&3H-11P-7Q\\
\cdot&2H-4P-8Q&\cdot&3H-9P-9Q&\cdot\\
2H-2P-10Q&\cdot&3H-7P-11Q&\cdot&4H-12P-12Q
\end {array} 
\]
With $H \sim 13Q$, we obtain the following multiplicities for the discrepencies at $(P,Q)$. 
\[
\begin {array}{ccccc} 
(0,0)&\cdot&(-5,12)&\cdot&(-10,24)\\
\cdot&(-3,10)&\cdot&(-8,22)&\cdot\\
(-1,8)&\cdot&(-6,20)&\cdot&(-11,32)\\
\cdot&(-4,18)&\cdot&(-9,30)&\cdot\\
(-2,16)&\cdot&(-7,28)&\cdot&(-12,40)
\end {array} 
\]
For the given Suzuki curve, Beelen \cite{Bee07FF} gives
an example of a two-point code for which the floor bound exceeds the order bound. 
The example generalizes to any Suzuki curve. For both the Suzuki curve over $\ff_8$ and
over $\ff_{32}$ (for which $q_0=4, q=32, g=124, N=1025, m=41=5^2+4^2$), the example is 
the only two-point code for which the floor bound exceeds the order bound.

\begin{example}
Let $A = B = K-H, Z=2P+2Q.$ With $\dim L(H) - \dim L(H-2P-2Q) = 4$, it follows 
that $L(A+Z)=L(A)$ and $L(B+Z)=L(B).$ For the code $C_\Omega(D,G) = C_L(D,G)^\perp$ with
$G=K+C=A+B+Z=2K-2H+2P+2Q$, the threshold divisor $C=K-2H+Z$. The floor bound gives 
minimum distance $d \geq \deg C + \deg Z = d^\ast + 4.$ 
This is one better than the order bound.
\end{example}

We give an example of the ABZ bound for codes that improves both the floor bound and
the order bound.

\begin{example} \label{E:abz}
Let $A = B = K-H, Z=(q_0+2)P+2Q$. For the code $C_\Omega(D,G) = C_L(D,G)^\perp$ with
$G=K+C=A+B+Z=2K-2H+(q_0+2)P+2Q$, the threshold divisor $C=K-2H+Z$. For the
ABZ bound we use $\dim L(A) - \dim L(A-C) + \dim L(B) - \dim L(B-C) = 
2 (\dim L(K-H) - \dim L(H-Z)) = 2 \deg (K-H) - \deg K + 2 \dim L(H).$
The bound $d \geq 10$ for $q_0=2$ is one better than both the floor bound and
the order bound.
\end{example}

We illustrate the use of tables $K^\pm_i(j,\ell)$ and $N^\pm_i(j,\ell)$ for the comparison of delta sets $\Delta_P(\ell Q,iP+jQ+Q)$
and $\Delta_P(\ell Q \mp Q,iP+jQ+Q)$ (Proposition \ref{P:tables}). The functions $d_P=d_Q=d'_P=d'_Q$ all agree and we can omit the index. 
The functions are defined on residue classes modulo $m$. With Lemma \ref{L:q0} and Theorem \ref{T:descsuz}, for $|a|+|b| \leq q_0$, 
\[
d(k) = (q_0-a)(q-1), \quad \text{for $k=a(q_0+1)+bq_0-q_0(q_0+1) \pmod{m}$}
\]

\begin{example} \label{E:suz1}
For the Suzuki curve over $\ff_{32}$, let $C=55P+31Q.$ In this case there is a unique choice $B=-5Q$ such that
$\Delta_P(B,C) \geq 90.$ The improvement over the choice $B=0$ can be seen as follows.
\begin{align*}
&\begin{array}{rrrrrrrrrrrrr}
(i=55,j=30)   &\ell                   &=0   &-1  &-2  &-3   &-4    &-5    \\[1ex] \hline
&d(\ell)                   &0   &31  &62  &93   &124 &\cdot \\
(K^+)    &k^+   &0   &32  &64  &(96) &(128) &\cdot \\[1ex]
&d(k^+-i)                  &62  &93  &124 &0    &31 &\cdot \\
&d(\ell)-d(k^+-i)          &-62 &-62 &-62 &93   &93  &\cdot \\
(N^+) &\leq i+j                  &1   &1   &1   &0    &0  &\cdot  \\[1ex] \hline
\end{array} 
\end{align*}
\begin{align*}
&\begin{array}{rrrrrrrrrrrrr}
(i=55,j=30)   &\ell                   &= 0   &-1  &-2  &-3   &-4    &-5    \\[1ex] \hline
&d(\ell-j)     &\cdot &217 &124 &155 &186 &217 \\ 
(K^-) &k^-      &\cdot &303 &(211) &243 &275 &307 \\[1ex]
&d(k^-)                       &\cdot &155 &217 &93  &124 &155 \\
&d(k^-)-d(\ell-j)             &\cdot &-62  &93   &-62  &-62  &-62  \\
(N^-) &\leq i+j                &\cdot &1    &0    &1    &1    &1 \\[1ex] \hline
\end{array}
\end{align*}
The tables use $k^+ = d(\ell) - \ell$ and $k^- = d(\ell-j)-\ell+i+j.$ \\

From the tables we obtain
\begin{align*}
I_P(0,C) \backslash I_P(-5Q,C) &= \{ k^+ \in I_P(-5Q,5Q) : k^+ P \in \Gamma_P \} \\
                               &= \{ 0, 32, 64, (96), (128) \}. \\
I_P(-5Q,C) \backslash I_P(0,C) &= \{ k^- \in I_P(-C,5Q) : -5Q+k^- P \not \in \Gamma_P \} \\
                               &= \{ 307, 275, 243, (211), 303 \}.
\end{align*}
The net gain is therefore $4-3=1.$ To reach this conclusion it is sufficient to consult the 
rows ($N^+$) and ($N^-$).
\begin{align*}
\Delta_P(55P+31Q) \supseteq &\{ A_1 = 36P-5Q, \ldots, A_{45} = 163P-5Q \} \\
 & \cup \{ A_{46} = 180P-5Q, \ldots, A_{90} = 307P-5Q \}
\end{align*}
\end{example}

\begin{example} \label{E:suz2}
We illustrate the improvemnt of the ABZ bound for cosets over the order bound.
Both $\# \Delta_P(0,9P+9Q) = \# \Delta_P(9Q,9P+9Q) = 40.$ This is the optimum
for the order bound. For $r \geq 0$,
\[
\# \Delta_P(0,C) = \# \Delta_P(\leq rP,C) + \# \Delta_P(\geq rP+P,C). 
\]
For $r, s \geq 0$ such that
\[
\# \Delta_P(\geq rP+P+sQ,C) > \# \Delta_P(\geq rP+P,C)
\]
we obtain an improvement using the ABZ bound with choices $B=rP, Z=sQ$ (Theorem \ref{T:cbabz}).
As in the previous example we compare delta sets and find 
\begin{align*}
&I_P(0,9P+9Q) \backslash I_P(9Q,9P+9Q) = \{ k^- \in I_P(-9P,9Q) : k^- P \in \Gamma_P \} \\
= &\{ 141, 109, 77, (45), 137, 105, 73, 41, (9) \}. \\[1ex]
&I_P(9Q,9P+9Q) \backslash I_P(0,9P+9Q) = \{ k^+ \in I_P(9Q,9Q) : 9Q+k^+ P \not \in \Gamma_P \} \\
= &\{ 115, 147, 179, (211), 119, 151, 183, 215, (247) \}.
\end{align*}
The information shows that although the delta sets $\Delta_P(0,9P+9Q)$ and $\Delta_P(9Q,9P+9Q)$
have the same size, the first contains more divisors of small degree and the latter more divisors 
of high degree. For $Z=9Q$ and for $141 \leq r \leq 146$ (or $109 \leq r \leq 114$) we see that
\[
\# \Delta_P(\geq rP+P+sQ,C) - \# \Delta_P(\geq rP+P,C) = 5.
\]
The order bound gives minimum distance $d \geq 40$ for the AG code with $C=9P+9Q$ and
$d \geq 50$ for the AG code with $C=10P+9Q.$ Thus we improve the minimum distance for
$C=9P+9Q$ from $d \geq 40$ to $d \geq 45.$
\begin{align*}
\Delta_P(9P+9Q) \supseteq &\{ A_1 = 0, \ldots, A_{18} = 109P \} \\
 & \cup \{ A_{19} = 112P+9Q, \ldots, A_{45} = 256P+9Q \}
\end{align*}
\end{example}

\begin{example}
The ABZ bound, while more general than the order bound, is still only a special case
of the main theorem. The following choice of divisors in $\Delta_P(12P+12Q)$ gives
$\gamma_P(12P+12Q) \geq 56.$ This improves both the order bound and the ABZ bound
(for all possible choices of $A, B,$ and $Z$ as integer combinations of $P$ and $Q$).
\begin{align*}
\Delta_P(12P+12Q) \supseteq &\{ A_1 = 0, \ldots, A_{24} =116P \} \\
 & \cup \{ A_{25} = 118P+6Q, \ldots, A_{32} = 141P+6Q \} \\
 & \cup \{ A_{33} = 143P+12Q, \ldots, A_{56} = 259P+12Q \} 
\end{align*}
\end{example}
 
\appendix
 
\section{Coset decoding} \label{S:CosetDecoding}

For a given vector $y \in \ff^n$, and for an extension of linear codes $\Cc' \subset \Cc \subset \ff^n$, coset decoding determines
the cosets of $\Cc'$ in $\Cc$ that are nearest to the vector $y.$ If $y$ is at distance $d(y,\Cc) \leq t$ from $\Cc$ and
the minimum distance $d(\Cc/\Cc')$ between distinct cosets is at least $w > 2t$ then there exists a unique nearest coset 
$c + \Cc'$ with $d(y,c+\Cc') \leq t$. We describe a coset decoding procedure that returns the unique coset when the 
estimate $d(\Cc/\Cc') \geq w$ is obtained with Theorem \ref{T:cosetbound}. 
The procedure follows the majority coset decoding procedure in \cite{Duu93Thesis}, \cite{Duu93}. \\

Shift bound or Coset bound (Theorem \ref{T:cosetbound}):  
Let $\Cc/\Cc_1$ be an extension of $\ff$-linear codes
with corresponding extension of dual codes $\Cd_1/\Cd$ such that 
$\dim \Cc/\Cc_1 = \dim \Cd_1/\Cd = 1$.
If there exist vectors $a_1, \ldots, a_w$ and $b_1, \ldots, b_w$ such that 
\[
\begin{cases} 
a_i \ast b_j \in \Cd    &\text{for $i+j \leq w$}, \\
a_i \ast b_j \in \Cd_1 \backslash \Cd &\text{for $i+j = w+1$}, 
\end{cases}
\]
then $d(\Cc/\Cc_1) \geq w.$ \\

For a given $x \in \Cd_1 \backslash \Cd$, we may assume, after rescaling if necessary,
that $a_i \ast b_{w+1-i} \in x+\Cd$, for $i=1,\ldots,w.$ Define the following cosets of $a_i$ and $b_{w+1-i}$,
for $i=1,\ldots,w,$
\begin{align*} 
A_i &= a_i + \langle a_1, \ldots, a_{i-1} \rangle, \\
B_{w+1-i} &= b_{w+1-i} + \langle b_1, \ldots, b_{w-i} \rangle.
\end{align*}
For $c \in \Cc$, the coset $c+\Cc_1$ is uniquely determined by $x \cdot c$. 
For a given $y \in \ff^n$ such that $d(y,\Cc) \leq t$, the decoding procedure
will look for a pair $a' \in A_i, b' \in B_{w+1-i}$ such that, for
all $c \in \Cc$ with $d(y,c) \leq t$, $(a' \ast b') \cdot y = x \cdot c.$
The vector $a' \ast b'$ is defined as the Hadamard or coordinate-wise product
of the vectors $a'$ and $b'$. We use $(a' \ast b') \cdot y = (a' \ast y) \cdot b'$.

\begin{theorem} (Decoding up to half the coset bound) \label{T:decoding}
Let $2t < w \leq d(\Cc/\Cc_1)$, for $\Cc/\Cc_1$ and $w$ as in Theorem \ref{T:cosetbound}.
For $y \in \ff^n$ such that $d(y,\Cc) \leq t$, let
\begin{align*}
&I =  \{ 1 \leq i \leq w : \; (\exists a'_i \in A_i) \; (a'_i \ast b_j) \perp y, 1 \leq j \leq w-i \;\}, \\
&I^\ast =  \{ 1 \leq j \leq w : \; (\exists b'_j \in B_{w+1-j}) \; (a_i \ast b'_j) \perp y, 1 \leq i \leq j-1 \;\}. 
\end{align*}
For every $c \in \Cc$ with $d(y,c) \leq t$, $x \cdot c = (a'_i \ast b'_i) \cdot y$, for a majority of $i \in I \cap I^\ast.$
\end{theorem}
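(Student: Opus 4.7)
My plan is to reduce everything to an inner-product computation on the error support and then to count. Set $e = y - c$ and let $E$ be the support of $e$, so $|E| \leq t$. For each $i \in I \cap I^*$ the starting observation is that $(a'_i \ast b'_i) \cdot y = x \cdot c + (a'_i \ast b'_i) \cdot e$: indeed, expanding $a'_i$ and $b'_i$ back to the basic vectors shows that $a'_i \ast b'_i$ lies in $x + \Cd$, since the only term with $k + \ell = w + 1$ is $a_i \ast b_{w+1-i}$ (normalized to $x + \Cd$) and every other term has $k + \ell \leq w$ and lies in $\Cd$, which is orthogonal to $\Cc \ni c$.

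The next step is to recast everything on $E$. Define $u_k = (a_k \ast e)|_E$ and $v_k = b_k|_E$ in $\ff^E$, together with the flags $U_j = \langle u_1, \ldots, u_j\rangle$ and $V_j = \langle v_1, \ldots, v_j\rangle$. The vectors $\alpha_i = (a'_i \ast e)|_E \in u_i + U_{i-1}$ and $\beta_i = b'_i|_E \in v_{w+1-i} + V_{w-i}$ satisfy $(a'_i \ast b'_i) \cdot e = \alpha_i \cdot \beta_i$. The conditions $i \in I$ and $i \in I^*$ become $\alpha_i \perp V_{w-i}$ and $\beta_i \perp U_{i-1}$; these make $f_i := \alpha_i \cdot \beta_i$ independent of the particular valid choices of $a'_i$ and $b'_i$, since any two valid $\alpha_i$ differ by an element of $U_{i-1}$ annihilated by $\beta_i$, and symmetrically. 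The crucial dichotomy is: if $u_i \in U_{i-1}$ then the coset collapses so $\alpha_i \in U_{i-1}$, forcing $f_i = 0$; symmetrically if $v_{w+1-i} \in V_{w-i}$ then $\beta_i \in V_{w-i}$ and $f_i = 0$. Only indices in $C := J^c \cap J^{*c}$, where $J = \{i : u_i \in U_{i-1}\}$ and $J^* = \{i : v_{w+1-i} \in V_{w-i}\}$, can yield $f_i \neq 0$.

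The last step is the counting. Since $U_w \subseteq \ff^E$ we have $|J^c| = \dim U_w \leq |E| \leq t$, and likewise $|J^{*c}| \leq t$. The bad indices within $I \cap I^*$ number at most $|C|$, while $J \cap J^*$ lies inside $I \cap I^*$ and contributes only good indices; by inclusion-exclusion $|J \cap J^*| = w - |J^c| - |J^{*c}| + |C|$. Combining, the excess of good over bad is at least $|J \cap J^*| - |C| = w - |J^c| - |J^{*c}| \geq w - 2t > 0$, giving a strict majority. The main obstacle is this last bookkeeping: one must combine the two flag-dimension bounds $|J^c|, |J^{*c}| \leq t$ with inclusion-exclusion in just the right way to convert the sole hypothesis $w > 2t$ into strict majority, rather than the looser $w > 3t$ or $w > 4t$ that a less careful accounting would give.
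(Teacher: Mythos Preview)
Your proof is correct and follows essentially the same approach as the paper's. Your sets $J$ and $J^{*}$ coincide with the paper's $\Gamma$ and $\Gamma^{*}$ (restricting $a_k$ to $E$ and restricting $a_k \ast e$ to $E$ differ only by the invertible diagonal scaling $e|_E$, so the linear-dependence patterns are identical), and the final inclusion--exclusion count is the same. One point you handle more carefully than the paper is the well-definedness of $f_i=(a'_i\ast b'_i)\cdot y$ with respect to the choices of $a'_i$ and $b'_i$; the paper uses this implicitly when it passes from the $a'_i$ witnessing $i\in\Gamma$ to the one witnessing $i\in I$.
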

\begin{proof}
For $c \in \Cc$, let $a'_i \in A_i$ be such that $a'_i \ast y = a'_i \ast c.$ 
The vector $a'_i$, if it exists, satisfies $(a'_i \ast b_j) \cdot y = 0$, for $j = \{1,\ldots,w-i \}.$ Moreover, for
any $b' \in B_{w+1-i}$, $(a'_i \ast b') \cdot y =
(a'_i \ast b') \cdot c = (a_i \ast b_{w+1-i}) \cdot c = x \cdot c.$  Let
\[
\begin{array}{lcl}
\Gamma = \{ 1 \leq i \leq w : \; (\exists a'_i \in A_i) \;  a'_i \ast y = a'_j \ast c \; \}, &
&\Delta = \{ 1 \leq i \leq w \} \backslash \Gamma, \\
\Gamma^\ast = \{ 1 \leq j \leq w : \; (\exists b'_j \in B_{w+1-j}) \; b'_j \ast y = b'_j \ast c \}, &
&\Delta^\ast = \{ 1 \leq j \leq w \} \backslash \Gamma^\ast. 
\end{array}
\]
We know a priori only the sets $I$ and $I^\ast$. Clearly, $\Gamma \subset I$ and $\Gamma^\ast \subset I^\ast.$ 
Moreover, for $c \in \Cc$ with $d(y,c) \leq t$, $|\Delta|, |\Delta^\ast| \leq t.$ For $i \in I \cap I^\ast$, 
$(a'_i \ast b'_i) \cdot y = x \cdot c$ if either $i \in \Gamma$ or $i \in \Gamma^\ast$. Regardless of the actual sets 
$I$ and $I^\ast$, this is certainly the case if $i \in \Gamma \cap \Gamma^\ast$ and it fails only when 
$i \in \Delta \cap \Delta^\ast$. Now
\[
|\Gamma \cap \Gamma^\ast| - |\Delta \cap \Delta^\ast| = w - |\Gamma \cap \Delta^\ast| - |\Gamma^\ast \cap \Delta|
\geq w - 2t > 0.
\]
Thus, the majority of $i \in I \cap I^\ast$ will give a value $(a'_i \ast b'_i) \cdot y = x \cdot c.$ 
\end{proof}

If $\dim \Cc / \Cc' > 1$ then the procedure can be applied iteratively to a sequence of extensions
$\Cc' = \Cc_r \subset \Cc_{r-1} \subset \cdots \Cc_1 \subset \Cc_0 = \Cc$ such that $\dim \Cc_{i} / \Cc_{i-1} = 1$,
for $i=1,\ldots,r.$  For given $y_0 \in \ff^n$ with $d(y_0,\Cc_0) \leq t$, 
the procedure returns the unique coset $c_0+\Cc_1$ such that $d(y_0,c_0+\Cc_1) \leq t$. At the
next iteration, for $y_1 = y_0-c_0 \in \ff^n$ with $d(y_1,\Cc_1) \leq t$, the procedure returns the 
unique coset $c_1+\Cc_2$ such that $d(y_1,c_1+\Cc_2) \leq t$, and so on. \\

Let ${\cal A} = \{ A_1 \leq A_2 \leq \cdots \leq A_w \} \subset \Delta_P(C)$ be
a sequence of divisors with $A_{i+1} \geq A_i + P$, for $i=1,\ldots,w-1.$ 
Theorem \ref{T:cbdiv} (Main theorem) together with Lemma \ref{L:cosetgammas} shows that 
$d(C_\Omega(D,G-P) / C_\Omega(D,G)) \geq w$, for $G$ such that $C=G-K-P,$ and for
$D \cap (A_w-A_1) = \emptyset.$ We show how the coset decoding procedure applies to
the given extension. For a divisor $A_i \in \Delta_P(C)$, also $K+C+P-A_i=G-A_i \in \Delta_P(C).$ Thus, 
there exist functions $f_i \in L(A_i) \backslash L(A_i-P)$ and $g_i \in L(G-A_i) \backslash L(G-A_i-P).$
Let $(a_i \ast b_{w+1-j}) = ((f_i g_j)(P_n), \ldots, (f_i g_j) (P_n)),$ for $i \leq j.$ Then
\[
\begin{cases} 
a_i \ast b_j \in C_L(D,G) &\text{for $i+j \leq w$}, \\
a_i \ast b_j \in C_L(D,G) \backslash C_L(D,G-P) &\text{for $i+j = w+1$}, 
\end{cases}
\]
Moreover, we have the following interpretation for the sets $\Gamma, \Gamma^\ast, \Delta, \Delta^\ast.$ 
\[
\begin{array}{llcll}
i \in \Gamma &\Leftrightarrow~ A_i \in \Gamma_P(Q), &  &i \in \Delta &\Leftrightarrow~ A_i \in \Delta_P(Q), \\
i \in \Gamma^\ast &\Leftrightarrow~ A_i \in \Delta_P(C-Q), &  &i \in \Delta^\ast &\Leftrightarrow~ A_i \in \Gamma_P(C-Q).
\end{array}
\]
The order bound (Theorem \ref{T:order}) and the floor bound (Theorem \ref{T:floor}) as well as their generalizations
the ABZ bound for cosets (Theorem \ref{T:cbabz}) and the ABZ bound for codes (Theorem \ref{T:Floor2})
are all obtained in this paper as special cases of the main theorem. Thus, in each case coset decoding 
can be performed with Theorem \ref{T:decoding}. 

\nocite{Bra04} \nocite{BraSul07} \nocite{SKP07} \nocite{Sul01} \nocite{Pre98}
\nocite{HufPle03} \nocite{Lin99} 
\nocite{Pre98} \nocite{Ste99} 
\nocite{Sti93} 
\nocite{TsfVla07} 
\nocite{CraetSix05}
\nocite{HanSti90} 
\nocite{Mat04} 
\nocite{CarTor05} \nocite{MahMat06} \nocite{LunMcc06}
\nocite{Gar93} \nocite{KirPel95} 
\nocite{BeeNes06JPA} \nocite{Bee07FF} \nocite{HomKim06} \nocite{Kim94} \nocite{Mat01} 
\nocite{CheCra06} 
\nocite{Duu93}
\nocite{Duu08}

\newpage

\newcommand{\etalchar}[1]{$^{#1}$}
\def\lfhook#1{\setbox0=\hbox{#1}{\ooalign{\hidewidth
  \lower1.5ex\hbox{'}\hidewidth\crcr\unhbox0}}}


\end{document}